\theoremstyle{plain}
\newtheorem{theorem}{Theorem}[section]
\newtheorem{proposition}[theorem]{Proposition}
\newtheorem{corollary}[theorem]{Corollary}
\newtheorem{conjecture}[theorem]{Conjecture}
\theoremstyle{definition}
\newtheorem*{definition}{Definition}
\newtheorem*{example}{Example}
\newtheorem{question}{Question}
\newtheorem*{problem}{Problem}
\theoremstyle{remark}
\newtheorem*{remark}{Remark}
\newcommand{\Cay}{\mathrm{Cay}}
\newcommand{\pmd}{\mathrm{pmd}}
\begin{document}
\title[Positive matching decompositions of graphs]{Positive matching decompositions of graphs}
\author[M. Farrokhi D. G.]{Mohammad Farrokhi D. G.}
\email{m.farrokhi.d.g@gmail.com,\ farrokhi@iasbs.ac.ir}
\address{Research Center for Basic Sciences and Modern Technologies (RBST), Institute for Advanced Studies in Basic Sciences (IASBS), Zanjan 45137-66731, Iran}

\author[Sh. Gharakhloo]{Shekoofeh Gharakhloo}
\email{gh.shekoofeh@iasbs.ac.ir}
\author[A. A. Yazdan Pour]{{Ali Akbar} {Yazdan Pour}}
\email{yazdan@iasbs.ac.ir}
\address{Department of Mathematics, Institute for Advanced Studies in Basic Sciences (IASBS), Zanjan 45137-66731, Iran}

\subjclass[2010]{Primary 05C70, 05C78, 05C38; Secondary 05C62, 05E40.}
\keywords{Alternating walk, matching, positive matching, LSS-ideal, orthogonal representation}
	
\begin{abstract}
A matching $M$ in a graph $\Gamma$ is positive if $\Gamma$ has a vertex-labeling such that $M$ coincides with the set of edges with positive weights. A positive matching decomposition (pmd) of $\Gamma$ is an edge-partition $M_1,\ldots,M_p$ of $\Gamma$ such that $M_i$ is a positive matching in $\Gamma-M_1\cup\cdots\cup M_{i-1}$, for $i=1,\ldots,p$. The pmds of graphs are used to study algebraic properties of the Lov\'{a}sz-Saks-Schrijver ideals arising from orthogonal representations of graphs. We give a characterization of pmds of graphs in terms of alternating closed walks and apply it to study pmds of various classes of graphs including complete multipartite graphs, (regular) bipartite graphs, cacti, generalized Petersen graphs, etc. We further show that computation of pmds of a graph can be reduced to that of its maximum pendant-free subgraph.
\end{abstract}

\maketitle
\section{introduction}
Let $[n]=\{1, \ldots, n\}$ and $\Gamma = ([n], E)$ be a simple graph. We denote by $\bar{\Gamma}$ the complementary graph of $\Gamma$ with the vertex set $[n]$ and edge set $\bar{E} = \binom{[n]}{2} \setminus E$. An \textit{orthogonal representation} of $\Gamma$ in $\mathbb{R}^d$ assigns to each $i \in [n]$ a vector $u_i \in \mathbb{R}^d$ such that $u^T_i u_j = 0$, whenever $\{i,j\} \in \bar{E}$. Clearly, every graph $\Gamma = ([n], E)$ has a trivial orthogonal representation in $\mathbb{R}^{n}$, in which the vertex $i$ is represented by the standard basis vector $e_i$. More generally, if $\{X_1, \ldots, X_{\chi}\}$ is the coloring classes of $\bar{\Gamma}$, then $\Gamma$ has a orthogonal representation in $\mathbb{R}^{\chi}$, by assigning $x \in X_i$ to $e_i$ \cite[Example 10.5]{L}. 

Orthogonal representations of graphs were introduced by Lov\'{a}sz in 1979 \cite{Lo}, and it is shown that they are intimately related to important combinatorial properties of graphs (see \cite[Chapter 10]{L}). The following remarkable result due to Lov\'{a}sz, Saks, and Schrijver \cite{LSS, LSS-correction} determines under which conditions a graph has a non-degenerate orthogonal representation in the sense that the representation is in general position. Recall that a set of vectors $X$ in $\mathbb{R}^d$ is in \textit{general position} if every $d$-subset of $X$ is linearly independent.

\begin{theorem}
Let $\Gamma$ be a graph of order $n$. The followings are equivalent:
\begin{itemize}
\item[\rm (i)] $\Gamma$ is $(n - d)$-connected;
\item[\rm (ii)] $\Gamma$ has a general position orthogonal representation in $\mathbb{R}^d$.
\end{itemize}
\end{theorem}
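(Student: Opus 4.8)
The plan is to prove the two implications separately; the direction (ii) $\Rightarrow$ (i) is short, while (i) $\Rightarrow$ (ii) carries the real content. For (ii) $\Rightarrow$ (i), suppose $\Gamma$ has a general position orthogonal representation $u_1,\dots,u_n$ in $\mathbb{R}^d$ but is \emph{not} $(n-d)$-connected. Then some vertex separator $S$ with $|S|\le n-d-1$ splits the remaining vertices into nonempty sets $A$ and $B$ having no edges between them, so that $u_i^T u_j=0$ for all $i\in A$ and $j\in B$. Hence $\mathrm{span}\{u_i:i\in A\}$ and $\mathrm{span}\{u_j:j\in B\}$ are orthogonal, and their dimensions sum to at most $d$. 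But general position forces these dimensions to equal $\min(|A|,d)$ and $\min(|B|,d)$, and since $|A|+|B|=n-|S|\ge d+1$ with both parts nonempty, a one-line case check gives $\min(|A|,d)+\min(|B|,d)>d$, a contradiction.

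For (i) $\Rightarrow$ (ii) I would construct the representation by a generic, vertex-by-vertex procedure and show that general position comes for free. Fix any ordering $1,\dots,n$ of the vertices and, having placed $u_1,\dots,u_{i-1}$, choose $u_i$ generically in the subspace $W_i$ orthogonal to all previously placed non-neighbours of $i$ (this guarantees at the end that non-adjacent pairs are orthogonal). Writing $a_i$ for the number of earlier non-neighbours of $i$, one has $\dim W_i=d-a_i$, and this is at least $1$ for \emph{every} ordering: indeed $(n-d)$-connectivity forces $\delta(\Gamma)\ge n-d$, equivalently $\Delta(\bar\Gamma)\le d-1$, so each vertex has at most $d-1$ non-neighbours in total. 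The design of the construction is that general position then reduces to a single local condition at each step: if $u_i$ can be chosen in $W_i$ outside the span of every set of at most $d-1$ of the vectors $u_1,\dots,u_{i-1}$, then for any $d$-subset $T$ with largest element $i$ the set $T\setminus\{i\}$ is independent by the inductive hypothesis and $u_i\notin\mathrm{span}(T\setminus\{i\})$, so $T$ is independent; hence the final family is in general position.

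The crux, and the step I expect to be the main obstacle, is to show that such a choice of $u_i$ is always available, i.e. that $W_i\not\subseteq\mathrm{span}(S)$ for every $(d-1)$-subset $S$ of the earlier vectors. Over the infinite field $\mathbb{R}$ this is a genericity statement: it would suffice to exhibit, for each offending $S$, one orthogonality-respecting configuration of $u_1,\dots,u_{i-1}$ for which the containment fails, whence a generic configuration avoids all finitely many bad loci simultaneously. Here the minimum-degree bound no longer suffices and one must use the full strength of $(n-d)$-connectivity; I would encode the containment $W_i\subseteq\mathrm{span}(S)$ as the vanishing of a determinant built from an incidence/rigidity matrix recording the orthogonality constraints between the vertices and the constraint subspaces, and argue that $(n-d)$-connectivity makes this matrix attain the full rank needed to keep that determinant from vanishing identically, invoking Menger's theorem to supply the vertex-disjoint paths that certify the rank. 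Controlling all these subspaces simultaneously is delicate — indeed it is the source of the gap repaired in \cite{LSS-correction} — so I would take particular care in passing from the single-subset non-degeneracy statements to simultaneous general position, and in ensuring the genericity argument is carried out over $\mathbb{R}$ rather than merely over $\mathbb{C}$.
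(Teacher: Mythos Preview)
The paper does not prove this theorem. It is stated in the introduction purely as background, attributed to Lov\'asz, Saks, and Schrijver \cite{LSS,LSS-correction}, and then the paper moves on to its own subject (positive matching decompositions). There is therefore no ``paper's own proof'' to compare your proposal against.

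That said, your outline is broadly in line with the original Lov\'asz--Saks--Schrijver argument. The direction (ii) $\Rightarrow$ (i) is correct as written: the dimension count $\min(|A|,d)+\min(|B|,d)>d$ whenever $|A|+|B|\ge d+1$ with both parts nonempty is exactly the standard argument. For (i) $\Rightarrow$ (ii) you have identified the right skeleton (sequential generic choice inside the orthogonal complement of previously placed non-neighbours) and, crucially, the right obstacle: proving that $W_i$ is never contained in the span of any $(d-1)$ earlier vectors. You are also right that the minimum-degree bound $\Delta(\bar\Gamma)\le d-1$ only guarantees $\dim W_i\ge 1$ and does not by itself prevent $W_i\subseteq\mathrm{span}(S)$; the full $(n-d)$-connectivity is needed here, and the original proof (with its correction) handles this via a careful rank/genericity argument. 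Your proposal stops short of actually executing that step --- you describe \emph{what} one would need to show (non-vanishing of certain determinants, certified via Menger-type path systems) rather than showing it --- so as a proof it is incomplete, but as a plan it is accurate and you have correctly flagged the delicate point that necessitated \cite{LSS-correction}.
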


The set of all orthogonal representations of a graph $\Gamma=([n], E)$ coincides with the vanishing set in $\mathbb{R}^{n\times d}$ of the ideal
\[L_{\bar{\Gamma}}(d) = \left( x_{i,1}x_{j,1} + \cdots + x_{i,d}x_{j,d} \colon \quad \{i, j\} \in E(\bar{\Gamma})\right),\]
in the polynomial ring $\mathbb{R}[x_{i,k} \colon \; i = 1,\ldots,n,\; k = 1, \ldots, d]$. The reader may refer to \cite[Chapter 10.6]{L} for some results on the geometry of the variety of orthogonal representations. However, from an algebraic point of view, these ideals can be studied over any field $\mathbb{K}$. Accordingly, in \cite{HMSW} and later in \cite{ac-vw}, the authors study the graded ideal 
\[ L_{\Gamma}^\mathbb{K}(d) = \left( x_{i,1}x_{j,1} + \cdots + x_{i,d}x_{j,d} \colon \quad \{i, j\} \in E({\Gamma})\right),\]
in the polynomial ring $S=\mathbb{K}[x_{i,k} \colon \; i = 1,\ldots,n,\; k = 1, \ldots, d]$. The ideal $L_{\Gamma}^\mathbb{K}(d)$ is called the \textit{Lov\'{a}sz-Saks-Schrijver ideal}, \textit{LSS-ideal} for short, of $\Gamma$ with respect to $\mathbb{K}$. For $d = 1$, the ideal $L_\Gamma^\mathbb{K}(d)$ is a squarefree monomial ideal known as the \textit{edge ideal} of $\Gamma$. The algebraic properties of LSS-ideal, such as being prime, radical, and complete intersection are studied in \cite{HMSW,ac-vw}. Also, in \cite{HMSW} the (minimal) primary decomposition of $L_\Gamma^\mathbb{K}(2)$ has been presented in terms of combinatorics of $\Gamma$. Recall that an ideal $I\subseteq S$ is called \textit{complete intersection} if $I$ is generated by a sequence $a_1,\ldots,a_m$ such that $a_i$ is not a zero-divisor of $S/(a_1,\ldots,a_{i-1})$, for $i=1,\ldots,m$. For algebraic background in this paper, the reader may refer to \cite{rys}.

Let $X=(x_{ij})$ be a generic symmetric matrix by which we mean an square $n\times n$ matrix of variables $x_{ij}$, where $x_{ij}=x_{ji}$. Let $\Gamma=([n],E)$ be a graph and $X_\Gamma^\mathrm{sym}$ denote the matrix obtained from $X$ by replacing $ij$ and $ji$ entries by $0$ if $ij\in E$. Let $S=\mathbb{K}[x_{ij}\colon\ 1\leq i\leq j\leq n]$ and $d$ be a positive integer. Then the ideal $I_d^\mathbb{K}(X_\Gamma^\mathrm{sym})$ of all $d$-minors of $X_\Gamma^\mathrm{sym}$ defines a coordinate hyperplane section
of the generic symmetric determinantal variety. It is known from \cite[Proposition 7.5]{ac-vw} that $I_d^\mathbb{K}(X_\Gamma^\mathrm{sym})$ is radical, prime, or has maximal height if $L_{\Gamma}^\mathbb{K}(d)$ is radical, prime, or complete intersection, respectively, provided that $\mathbb{K}$ has characteristic zero. Similar relations also exists with $I_d^\mathbb{K}(X_\Gamma^\mathrm{sym})$ replaced by ideals defining coordinate sections of determinantal and Pfaffian ideals (see \cite{ac-vw} for further details). In \cite[Section 5]{ac-vw}, the authors show that for large enough $d$, the ideal $L_\Gamma^\mathbb{K}(d)$ is prime and complete intersection. Subsequently, $I_d^\mathbb{K}(X_\Gamma^\mathrm{sym})$ is prime and has maximal height. For this purpose, the authors define a graph-theoretical invariant $\pmd(\Gamma) \in \mathbb{N}$, called the positive matching decomposition number of $\Gamma$, and connect this number to algebraic properties of LSS-ideals. Remind that a \textit{matching} in a graph is a set of disjoint edges.

\begin{definition}[{\cite[Definition 5.1]{ac-vw}}]
Given a graph $\Gamma = (V, E)$, a \textit{positive matching} of $\Gamma$ is a matching $M$ of $\Gamma$ such that there exists a weight function $w \colon V \to \mathbb{R}$ satisfying:
\begin{equation}
\begin{split}
& \sum\limits_{v \in e} w(i) >0, \quad\text{ if } e \in M, \\
& \sum\limits_{v \in e} w(i) <0, \quad\text{ if } e \notin M.
\end{split}
\end{equation}
\end{definition}
\begin{remark}
Notice that in the definition of positive matchings we may replace real labels with integers by multiplying all labels by a large enough positive integer and replacing new labels with their integer parts. 
\end{remark}
\begin{definition}[{\cite[Definition 5.3]{ac-vw}}]
Let $\Gamma = (V, E)$ be a graph. A \textit{positive matching decomposition} (or \textit{pmd}) of $\Gamma$ is a partition $E = \bigcup_{i=1}^p E_i$ of $E$ into pairwise disjoint subsets such that $E_i$ is a positive matching of $(V, E \setminus \bigcup_{j=1}^{i-1} E_j)$, for $i = 1, \ldots, p$. The $E_i$ are called the \textit{parts} of the pmd. The smallest $p$ for which $ \Gamma $ admits a pmd with $ p $ parts will be denoted by $ \pmd(\Gamma) $.
\end{definition}

In what follows, ``the pmd'' of a graph $\Gamma$ denotes the minimum size of all positive matching decompositions of $\Gamma$, namely $\pmd(\Gamma)$, while ``a pmd'' of $\Gamma$ refers to any positive matching decomposition of $\Gamma$. Also, it is evident from the definition that a matching $M$ of a graph $\Gamma$ is positive in $\Gamma$ if and only if it is positive in $\Gamma[M]$, the subgraph of $\Gamma$ induced by vertices of $M$.

The following theorem establishes a nice connection between the pmd of a graph with algebraic properties of the corresponding LSS-ideal.
\begin{theorem}[{\cite[Theorem 1.3]{ac-vw}}]\label{algebraic properties of pmd}
Let $\Gamma$ be a graph. Then for $d \geq \pmd(\Gamma)$ the ideal $L_\Gamma^\mathbb{K}(d)$ is a radical complete intersection ideal. In particular, $L_\Gamma^\mathbb{K}(d)$ is prime if $d \geq \pmd(\Gamma)+1$.
\end{theorem}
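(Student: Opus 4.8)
The plan is to realize $L_\Gamma^{\mathbb K}(d)$ as a complete intersection through a Gr\"obner degeneration whose initial ideal is a squarefree monomial complete intersection, with the term order built directly from a positive matching decomposition. Fix a pmd $E = E_1 \cup \cdots \cup E_p$ with $p = \pmd(\Gamma) \le d$, and for each level $\ell$ let $w_\ell \colon V \to \mathbb{R}$ witness that $E_\ell$ is a positive matching of $\Gamma_\ell := (V, E \setminus (E_1 \cup \cdots \cup E_{\ell-1}))$; thus $w_\ell(i)+w_\ell(j) > 0$ for $\{i,j\} \in E_\ell$ and $w_\ell(i)+w_\ell(j) < 0$ for every edge of $\Gamma_\ell$ of higher level. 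Writing $f_{ij} = \sum_{k=1}^d x_{i,k}x_{j,k}$, the aim is a weight vector $\omega = (\omega_{i,k})$ on the variables such that, for each edge $e = \{i,j\}$ of level $\ell$, the monomial $x_{i,\ell}x_{j,\ell}$ is the unique $\omega$-heaviest term of $f_{ij}$, i.e.
\[
\omega_{i,\ell} + \omega_{j,\ell} > \omega_{i,k} + \omega_{j,k} \qquad (k \neq \ell).
\]

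First I would set $\omega_{i,k} = \varepsilon^{k}\, w_k(i)$ for a small parameter $\varepsilon \in (0,1)$ (with $w_k \equiv 0$ for $k > p$) and verify the displayed inequality edge by edge. For a coordinate $k < \ell$ the edge $e$ lies in $\Gamma_k \setminus E_k$, so $w_k(i)+w_k(j) < 0$; since $\varepsilon^k > 0$ the $k$-term is negative while the $\ell$-term $\varepsilon^\ell(w_\ell(i)+w_\ell(j))$ is positive, and the inequality holds outright. This sign control is precisely what positivity of the matchings provides, and is why an ordinary edge-colouring into plain matchings would not suffice. For a coordinate $k > \ell$ the edge $e$ has already been deleted by level $k$, so $w_k(i)+w_k(j)$ is unconstrained; here I would instead exploit the geometric decay of $\varepsilon^k$: choosing $\varepsilon < \min_\ell g_\ell / B$, where $g_\ell = \min_{e \in E_\ell}(w_\ell(i)+w_\ell(j)) > 0$ and $B$ bounds all $|w_k(i)+w_k(j)|$, makes the positive $\ell$-term dominate every higher, geometrically smaller $k$-term. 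Hence $\mathrm{in}_\omega(f_{ij}) = x_{i,\ell}x_{j,\ell}$, and refining $\omega$ to a genuine monomial order $<$ preserves these leading terms.

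Once the leading terms are pinned down the conclusions follow quickly. Two leading monomials of edges in a common level $E_\ell$ are coprime because $E_\ell$ is a matching (vertex-disjoint edges), and leading monomials of edges of different levels are coprime because they sit in different coordinate blocks; thus the $|E|$ squarefree monomials $x_{i,\ell}x_{j,\ell}$ are pairwise coprime. By Buchberger's product criterion the generators $\{f_{ij}\}$ then form a Gr\"obner basis, so $\mathrm{in}_<(L_\Gamma^{\mathbb K}(d))$ is the squarefree monomial ideal $J_0 = (x_{i,\ell}x_{j,\ell} : \{i,j\} \in E_\ell)$. Pairwise coprimality makes $J_0$ a complete intersection of height $|E|$, and since passing to the initial ideal preserves Hilbert function and hence dimension, $L_\Gamma^{\mathbb K}(d)$ has height $|E|$ as well; an ideal generated by $|E|$ elements of height $|E|$ in a polynomial ring is a complete intersection. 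Radicality then descends along the degeneration from the squarefreeness of $J_0$, so $L_\Gamma^{\mathbb K}(d)$ is a radical complete intersection.

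For the last assertion I would upgrade \emph{radical} to \emph{prime} when $d \ge \pmd(\Gamma)+1$ via Serre's normality criterion. As a complete intersection, $S/L_\Gamma^{\mathbb K}(d)$ is Cohen--Macaulay, hence satisfies $(S_2)$ and is connected in codimension one; it remains to check regularity in codimension one, $(R_1)$. Here I would invoke the Jacobian criterion and argue that the additional coordinate afforded by $d \ge \pmd(\Gamma)+1$ forces the Jacobian of the $f_{ij}$ to drop rank only along a subvariety of codimension at least two. Then $S/L_\Gamma^{\mathbb K}(d)$ is normal, and a connected normal ring is a domain, giving primeness. I expect the construction of a \emph{single globally consistent} term order --- reconciling the positivity-driven inequalities for the low coordinates with the unconstrained high coordinates --- to be the crux of the argument, with the codimension-two estimate in the $(R_1)$ step as the secondary obstacle that explains why exactly one extra coordinate is required for primeness.
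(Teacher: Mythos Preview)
The paper under review does not prove this theorem: it is quoted from Conca and Welker \cite[Theorem~1.3]{ac-vw} as a black box motivating the combinatorial study of $\pmd(\Gamma)$, and no argument is supplied here. So there is no ``paper's own proof'' to compare against.

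That said, your sketch recovers the essential mechanism of Conca--Welker's original argument for the radical complete intersection assertion. The construction of a weight $\omega_{i,k}=\varepsilon^k w_k(i)$ from the positive-matching witnesses, the sign dichotomy for $k<\ell$ versus the geometric decay for $k>\ell$, the resulting pairwise coprimality of the squarefree initial monomials, and the descent of radicality and height from $\mathrm{in}_<(L_\Gamma^{\mathbb K}(d))$ to $L_\Gamma^{\mathbb K}(d)$ are all correct and match the source. Your remark that positivity (not mere matching-ness) is what forces the $k<\ell$ inequalities is exactly the point.

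The primeness step, however, is not yet a proof. You assert that the Jacobian of the $f_{ij}$ drops rank only in codimension $\ge 2$ once $d\ge\pmd(\Gamma)+1$, but give no mechanism for this bound; the extra coordinate does not by itself make the singular locus small, and a genuine argument is needed here. Conca--Welker handle this step by an explicit analysis rather than by a one-line appeal to Serre's criterion: they show that complete intersection in dimension $d$ forces primeness in dimension $d+1$ via a careful look at how the extra block of variables enters the generators (loosely, the $(d{+}1)$-st column gives enough room to deform away from any putative reducible component). If you want to carry out your $(R_1)$ approach you will need either that argument or an equivalent codimension estimate; as written, the last paragraph is a plausible plan but not a proof.
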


To become more apparent, in \cite{ac-vw} the authors evaluates the pmd of some well-known families of graphs as follows:
\begin{proposition}[{\cite[Lemma 5.4]{ac-vw}}]\label{general bounds for pmds}
Let $\Gamma$ be a graph of order $n\geq2$. Then
\begin{itemize}
\item[\rm (i)]$\pmd(\Gamma) \leq 2 n - 3$;
\item[\rm (ii)]$\pmd(\Gamma) \leq n - 1$ if $\Gamma$ is a bipartite graph;
\item[\rm (iii)]$\pmd(\Gamma) \geq \Delta(\Gamma)$ with equality if $\Gamma$ is a forest.
\end{itemize}
\end{proposition}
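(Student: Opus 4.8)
The plan is to isolate a simple monotonicity principle first and then dispatch the three items separately, reducing (i) and (ii) to complete and complete bipartite graphs. First I would note that $\pmd$ is monotone under passing to a spanning subgraph: if $\Gamma\subseteq\Gamma'$ on the same vertex set and $E'_1,\ldots,E'_p$ is a pmd of $\Gamma'$, then the traces $E'_i\cap E(\Gamma)$ form a pmd of $\Gamma$ (after discarding empty traces). Indeed, a weight function certifying $E'_i$ as the positive edges among $\Gamma'\setminus\bigcup_{j<i}E'_j$ a fortiori certifies $E'_i\cap E(\Gamma)$ as the positive edges among the fewer edges of $\Gamma\setminus\bigcup_{j<i}\bigl(E'_j\cap E(\Gamma)\bigr)$, and this trace is still a matching. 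Hence $\pmd(\Gamma)\le\pmd(\Gamma')$, so it suffices to prove (i) for $K_n$ and (ii) for $K_{s,t}$ with $s+t=n$.

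For (i) I would label the vertices of $K_n$ by $1,\ldots,n$ and partition the edges by the endpoint-sum $i+j$. The sums range over $3,4,\ldots,2n-1$, giving $2n-3$ classes, and each class $\{\,\{i,j\}:i+j=c\,\}$ is a matching, since a vertex $v$ has the unique possible partner $c-v$. Processing the classes in increasing order of $c$, every surviving edge at the step for $c$ has endpoint-sum at least $c$; taking the weight $w_k=-k+\tfrac{c}{2}+\tfrac14$ gives $w_i+w_j=-(i+j)+c+\tfrac12$, which equals $+\tfrac12$ on the class $i+j=c$ and is at most $-\tfrac12$ on every surviving edge with $i+j>c$. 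Thus each class is a positive matching of the residual graph and $\pmd(K_n)\le 2n-3$. The identical device on the $s\times t$ grid of $K_{s,t}$, partitioning its edges into the antidiagonals $i+j=c$ for $c=2,\ldots,s+t$, produces $s+t-1=n-1$ positive matchings and proves (ii).

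For the lower bound in (iii), let $v$ satisfy $\deg(v)=\Delta(\Gamma)$; since each part of a pmd is a matching it meets $v$ in at most one edge, so covering the $\Delta(\Gamma)$ edges at $v$ forces at least $\Delta(\Gamma)$ parts. For equality on forests I would establish the key lemma that \emph{every matching $M$ in a forest $F$ is positive}, by induction on $|V(F)|$ via leaf removal: choosing a leaf $\ell$ with neighbour $p$, a feasible weighting of $F-\ell$ exists by induction, and as $\ell$ lies on the single edge $\{\ell,p\}$ one sets $w_\ell>-w_p$ or $w_\ell<-w_p$ according as $\{\ell,p\}\in M$ or not, leaving all other constraints untouched. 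Granting the lemma, a proper edge colouring of $F$ with $\Delta(F)$ colours (which exists because forests are bipartite, by K\"onig's edge-colouring theorem) is a pmd: each colour class is a matching, the residual graph at every stage is again a forest, and the lemma makes that class a positive matching of it. Hence $\pmd(F)\le\Delta(F)$, giving equality.

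The genuinely delicate point is the lemma feeding (iii): one must guarantee that the strict sign constraints on a forest are simultaneously satisfiable, and the essential reason is that acyclicity removes every obstruction—exactly what the leaf induction exploits by peeling off degree-one vertices that each fix one free variable. The threshold computations underlying (i) and (ii) are then routine once the sum-level (antidiagonal) decomposition is in hand.
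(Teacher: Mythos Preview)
Your proof is correct. The paper itself cites this proposition from \cite{ac-vw} without proof, but later reproves the upper bounds: your endpoint-sum (antidiagonal) decomposition for $K_n$ and $K_{s,t}$ is exactly the construction used in Propositions~\ref{pmd of K_n} and~\ref{pmd of K_m,n}, and your spanning-subgraph monotonicity step makes explicit what the paper leaves implicit. For the forest case in (iii) the paper takes a different route: it deduces $\pmd(F)=\Delta(F)$ as Corollary~\ref{pmd of trees} from the antler-graph Theorem~\ref{pmd of antler graphs}, which in turn rests on the pendant characterization of positive matchings in Theorem~\ref{positivity of matchings}. Your argument is more direct and self-contained---leaf induction to show every matching in a forest is positive, then K\"onig's edge-colouring to supply $\Delta(F)$ matchings---whereas the paper obtains the forest case as a specialization of a broader structural result about removing pendants. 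Both ultimately hinge on the same observation (a pendant vertex lets you freely extend a feasible weighting), just packaged differently.
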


Since $\pmd(\Gamma) =\Delta(\Gamma)$, for any forest $\Gamma$, it follows from Theorem \ref{algebraic properties of pmd} that $L_\Gamma^\mathbb{K}(d)$ is a radical complete intersection (resp. prime), if $d \geq \Delta(\Gamma)$ (resp. $d \geq \Delta(\Gamma)+1$). The authors in \cite{ac-vw} find a complete picture in the case of forests.
\begin{proposition}[{\cite[Theorem 1.4]{ac-vw}}]
If $\Gamma$ is a forest with maximum valency $\Delta(\Gamma)$, then:
\begin{itemize}
\item[\rm (i)] $L_\Gamma^\mathbb{K}(d)$ is radical for all $d$;
\item[\rm (ii)] $L_\Gamma^\mathbb{K}(d)$ is a complete intersection if and only if $d \geq \Delta(\Gamma)$;
\item[\rm (iii)] $L_\Gamma^\mathbb{K}(d)$ is prime if and only if $d \geq \Delta(\Gamma)+1$.
\end{itemize}
\end{proposition}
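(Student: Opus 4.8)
The plan is to handle the two sufficiency directions uniformly and then attack each necessity direction (together with the low-dimensional part of radicality) separately. For the sufficiency, observe first that Proposition~\ref{general bounds for pmds}(iii) gives $\pmd(\Gamma)=\Delta(\Gamma)$ for every forest $\Gamma$. Feeding this into Theorem~\ref{algebraic properties of pmd} immediately yields the ``if'' halves of (ii) and (iii): $L_\Gamma^\mathbb{K}(d)$ is a (radical) complete intersection for $d\geq\Delta(\Gamma)$ and prime for $d\geq\Delta(\Gamma)+1$; in particular part (i) is already settled in the range $d\geq\Delta(\Gamma)$. Thus the genuine work is the radicality claim for $d<\Delta(\Gamma)$, the failure of the complete intersection property for $d<\Delta(\Gamma)$, and the failure of primeness for $d\leq\Delta(\Gamma)$.

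For the converse in (ii) I would argue by a height estimate. Fix a vertex $v$ of degree $\Delta=\Delta(\Gamma)$ and let $\Gamma-v$ be obtained by deleting $v$ and its incident edges. Setting all coordinates $x_{v,1},\ldots,x_{v,d}$ equal to zero kills exactly the $\Delta$ generators attached to $v$ and leaves the generators of $L_{\Gamma-v}^\mathbb{K}(d)$, so the coordinate subspace $\{x_{v,1}=\cdots=x_{v,d}=0\}$ cuts out inside $V(L_\Gamma^\mathbb{K}(d))$ a copy of $V(L_{\Gamma-v}^\mathbb{K}(d))$. Comparing dimensions gives
\[\operatorname{ht} L_\Gamma^\mathbb{K}(d)\ \leq\ d+\operatorname{ht} L_{\Gamma-v}^\mathbb{K}(d)\ \leq\ d+\bigl(|E(\Gamma)|-\Delta\bigr),\]
the last step using Krull's bound $\operatorname{ht} L_{\Gamma-v}^\mathbb{K}(d)\leq|E(\Gamma-v)|=|E(\Gamma)|-\Delta$. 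Hence $\operatorname{ht} L_\Gamma^\mathbb{K}(d)<|E(\Gamma)|$ whenever $d<\Delta$. Since the $|E(\Gamma)|$ defining quadrics form a minimal generating set, a complete intersection would force the height to equal $|E(\Gamma)|$; therefore $L_\Gamma^\mathbb{K}(d)$ is not a complete intersection for $d<\Delta$.

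For the converse in (iii) the key point is that non-primeness can be localized to a star. In a forest the neighbours of $v$ are pairwise non-adjacent, so $\{v\}\cup N(v)$ induces a copy of $K_{1,\Delta}$, and I claim $L_\Gamma^\mathbb{K}(d)\cap S_{K_{1,\Delta}}=L_{K_{1,\Delta}}^\mathbb{K}(d)$, where $S_{K_{1,\Delta}}$ is the subring on the star's variables. Indeed, the retraction sending every variable outside the star to $0$ kills each generator with an endpoint outside the star and fixes $S_{K_{1,\Delta}}$ pointwise, which forces any element of $L_\Gamma^\mathbb{K}(d)$ lying in $S_{K_{1,\Delta}}$ into $L_{K_{1,\Delta}}^\mathbb{K}(d)$. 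Since the contraction of a prime ideal is prime, it suffices to show $L_{K_{1,\Delta}}^\mathbb{K}(d)$ is not prime for $d\leq\Delta$. Writing $0$ for the centre, the prime $P_0=(x_{0,1},\ldots,x_{0,d})$ contains all $\Delta$ generators, so $L_{K_{1,\Delta}}^\mathbb{K}(d)\subseteq P_0$; stratifying the orthogonality variety by $x_0=0$ versus $x_0\neq0$ shows $\operatorname{ht} L_{K_{1,\Delta}}^\mathbb{K}(d)=\min(d,\Delta)=d=\operatorname{ht} P_0$ for $d\leq\Delta$. Thus $P_0$ is a prime of the same height strictly containing $L_{K_{1,\Delta}}^\mathbb{K}(d)$ (the latter is generated in degree two and has no linear part), which is incompatible with primeness. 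This settles (iii).

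What remains, and what I expect to be the main obstacle, is radicality in the range $d<\Delta(\Gamma)$, which is outside the scope of Theorem~\ref{algebraic properties of pmd}. Here I would induct on the number of edges by stripping a leaf $\ell$ with neighbour $w$: writing $\Gamma'=\Gamma-\ell$ and $f=\sum_{k=1}^d x_{w,k}x_{\ell,k}$, the fresh variables $x_{\ell,1},\ldots,x_{\ell,d}$ occur only in $f$ and only linearly, so
\[S/L_\Gamma^\mathbb{K}(d)\ \cong\ R[x_{\ell,1},\ldots,x_{\ell,d}]/(\,\overline f\,),\qquad R=S'/L_{\Gamma'}^\mathbb{K}(d),\ \ \overline f=\textstyle\sum_k \overline{x}_{w,k}\,x_{\ell,k},\]
and $R$ is reduced by the inductive hypothesis. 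Over each minimal prime $\mathfrak p$ of $R$ with some $\overline{x}_{w,k}\notin\mathfrak p$, the element $\overline f$ is a nonzero linear form over the domain $R/\mathfrak p$ and hence generates a radical ideal; the delicate case — and the crux of the entire statement — is the stratum where all coefficients $\overline{x}_{w,k}$ vanish, i.e. the minimal primes $\mathfrak p\supseteq(\overline{x}_{w,1},\ldots,\overline{x}_{w,d})$, which is exactly the phenomenon by which a high-valency vertex obstructs both the complete intersection and the prime properties. Showing that this stratum contributes no embedded or non-reduced component to $(\overline f)$ is where the forest structure (the absence of cycles through $w$) must enter, and is the step I expect to require the most care; an alternative I would also pursue is to produce a squarefree initial ideal of $L_\Gamma^\mathbb{K}(d)$ valid for all $d$, from which radicality would follow at once.
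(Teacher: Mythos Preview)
The paper does not prove this proposition at all: it is quoted verbatim from \cite[Theorem 1.4]{ac-vw} and stated without proof. The only thing the present paper contributes is the sentence immediately preceding the proposition, which derives the ``if'' directions of (ii) and (iii) (and radicality for $d\geq\Delta(\Gamma)$) from $\pmd(\Gamma)=\Delta(\Gamma)$ together with Theorem~\ref{algebraic properties of pmd}. That is exactly your sufficiency argument, so on that part you match the paper precisely.

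Everything else in your proposal---the height estimate for the failure of complete intersection, the retraction-to-a-star argument for non-primeness, and the leaf-stripping induction for radicality when $d<\Delta(\Gamma)$---goes beyond what this paper does and would have to be compared with the original argument in \cite{ac-vw}, not with the paper at hand. Your height argument for (ii) is sound. Your non-primeness argument for (iii) is essentially correct, though the step ``$\operatorname{ht} L_{K_{1,\Delta}}^\mathbb{K}(d)=d$'' via stratification is a geometric statement that, as written, is only airtight over an algebraically closed field; to conclude non-primeness over an arbitrary $\mathbb{K}$ you should phrase it purely algebraically (for instance, exhibit two elements whose product lies in the ideal while neither factor does). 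Finally, you yourself flag that radicality for $d<\Delta(\Gamma)$ is unfinished, and indeed your sketch stops exactly at the hard stratum; so your proposal is not a complete proof of the proposition but rather a reduction to that one remaining case, which is consistent with your own assessment.
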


Let $\mathbb{K}$ be an algebraically closed field and $S=\mathbb{K}[x_1,\ldots,x_n]$. It is known that the zero-set of an ideal $I\subseteq S$ is irreducible, in the sense of Zariski topology, if and only if $I$ is a prime ideal. In particular, the variety of orthogonal representations of a graph is irreducible if and only if the corresponding LSS-ideal (with respect to $\mathbb{K}$) is prime. Hence, motivated by Theorem~\ref{algebraic properties of pmd}, the pmd of graphs turns into an important graph-theoretical invariant that can be applied in both algebra and geometry. In addition, as we show in the next section, the pmd of graphs has connections to some other properties of graphs as well. The aim of this paper is to study the pmd of graphs from graph-theoretical point of view and compute it for various classes of graphs. Our results in conjunction with Theorem~\ref{algebraic properties of pmd} enable us to obtain some algebraic properties of the LSS-ideal of the graphs under consideration. This paper is organized as follows:

In Section \ref{section 2}, we introduce the notion of alternating walks and show that a matching $M$ in a graph $\Gamma$ is positive if and only if the subgraph of $\Gamma$ induced by $M$ has no alternating closed walks with respect to $M$. This is a key theorem and we apply this correspondence in all of our later results. The first application of this result is to show that classifying graphs in terms of their pmds is as hard as classifying graphs with a given maximum valency (see Proposition \ref{subdivision}). However, we can apply a reduction process to simplify the computation of pmds. Indeed, the next main result of this section states that to compute the pmd of a graph $\Gamma$ we need only to compute the pmd of the maximum pendant-free subgraph $\Gamma'$ of $\Gamma$. More precisely, we show that
\[\pmd(\Gamma)=\max\{\pmd(\Gamma'), \Delta(\Gamma)\}.\]

In Section \ref{section 3}, we discuss on the pmd of multipartite graphs. The first result of this section is to give lower and upper bounds for the pmd of complete multipartite graphs (see Theorem \ref{pmd of complete multipartite graphs}). Our result is based on special cases for which we have the exact value of pmd, namely $\pmd(K_n)=2n-3$ and $\pmd(K_{m,n})=m+n-1$. In the rest of this section, we consider bipartite graphs and give two approaches to compute an upper bound for their pmds. The first one yields an upper bound for the pmd of a bipartite graph in terms of its maximum valency, while the second one establishes an algorithm to compute an upper bound for pmd. Our computation shows that the algorithm is efficient in many cases including the particular graphs discussed in this paper.

The last section is devoted to computing the pmd of some further classes of graphs. We obtain the exact value of the pmd of generalized Petersen graphs and M\"{o}bius graphs, and give a, possibly exact, upper bound for the pmd of hypercubes. For those graph-theoretical concepts not mentioned in the paper, we refer the reader to the book \cite{rd}.
\section{Positive matching decompositions vs. alternating closed walks}\label{section 2}
The aim of this section is to give a characterization of positive matching via alternating closed walks. It turns out that a matching $M$ of a graph $\Gamma$ is positive if and only if the subgraph of $\Gamma$ induced by $M$ has no alternating closed walks with respect to $M$. Moreover, in order to compute the pmd of $\Gamma$, one may consider the maximum pendant-free subgraph of $\Gamma$ (Theorem \ref{pmd of antler graphs}). Recall that a vertex of a graph is \textit{pendant} if it has valency one. Also, an edge is pendant if one of its end vertices is a pendant vertex. 
\begin{definition}
An \textit{alternating walk} in a graph $\Gamma$ with respect to a matching $M$ is a walk whose edges alternate between edges of $M$ and $M^c:=E(\Gamma)\setminus M$.
\end{definition}

It is evident from the definition that every alternating closed walk has an even number of edges. The following theorem plays a crucial role in the sequel.
\begin{theorem}\label{positivity of matchings}
Let $M$ be a matching in a graph $\Gamma$. The following conditions are equivalent:
\begin{itemize}
\item[\rm (i)] $M$ is positive;
\item[\rm (ii)] The subgraph induced by $M$ does not contain any alternating closed walk;
\item[\rm (iii)] The subgraph induced by every subset of $M$ contains a pendant edge belonging to $M$;
\item[\rm (iv)] There exists an ordering of $M$ as $M=\{e_1,\ldots,e_n\}$ such that $e_i$ is a pendant edge in the subgraph induced by $\{e_1,\ldots,e_i\}$, for $i=1,\ldots,n$.
\end{itemize}
\end{theorem}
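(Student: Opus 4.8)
The plan is to establish the equivalences by proving the cyclic chain of implications
\[
\mathrm{(i)} \Rightarrow \mathrm{(ii)} \Rightarrow \mathrm{(iii)} \Rightarrow \mathrm{(iv)} \Rightarrow \mathrm{(i)},
\]
since each of these single steps looks more tractable than the full web of equivalences. Throughout I will freely use the observation already recorded in the excerpt that $M$ is positive in $\Gamma$ if and only if it is positive in $\Gamma[M]$, so that I may replace $\Gamma$ by the subgraph induced by the matched vertices and speak of pendant edges in induced subgraphs without ambiguity.

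For $\mathrm{(i)} \Rightarrow \mathrm{(ii)}$, I would argue by contraposition. Suppose $\Gamma[M]$ contains an alternating closed walk $W = v_0 v_1 \cdots v_{2k} = v_0$ whose edges alternate between $M$ and $M^c$, say with $\{v_0, v_1\} \in M$. Given any weight function $w$ witnessing positivity, I would sum the quantities $\sum_{v \in e} w(v)$ along the walk with alternating signs: the edges of $M$ contribute $+\big(w(v_0)+w(v_1)\big) + \cdots$ and the edges of $M^c$ contribute the negatives, and because $W$ is a closed walk every vertex weight appears the same number of times with a $+$ sign as with a $-$ sign, so the signed sum telescopes to $0$. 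On the other hand the defining inequalities force each $M$-term strictly positive and each $M^c$-term strictly negative, so the alternating combination is strictly positive, a contradiction. I expect this telescoping/cancellation argument to be the conceptual heart of the theorem, since it is where the combinatorial object (the alternating closed walk) is converted into a linear obstruction to the existence of $w$.

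The implications $\mathrm{(ii)} \Rightarrow \mathrm{(iii)}$ and $\mathrm{(iii)} \Rightarrow \mathrm{(iv)}$ are largely structural. For $\mathrm{(ii)} \Rightarrow \mathrm{(iii)}$ I would take any nonempty $M' \subseteq M$ and suppose the induced subgraph $\Gamma[M']$ has no pendant edge of $M'$; then every edge of $M'$ has both endpoints meeting some $M^c$-edge inside $\Gamma[M']$, which lets me greedily trace out an infinite alternating walk (alternating $M'$-edge, $M^c$-edge, $M'$-edge, $\ldots$) in a finite graph, forcing a repetition and hence an alternating closed walk, contradicting (ii). The step $\mathrm{(iii)} \Rightarrow \mathrm{(iv)}$ is then a clean induction: condition (iii) applied to $M$ itself yields a pendant edge, which I peel off and call $e_n$; applying (iii) to the remaining matching $M \setminus \{e_n\}$ (whose induced subgraphs are induced subgraphs of $\Gamma[M]$) produces $e_{n-1}$, and iterating builds the required ordering from the top down.

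Finally, $\mathrm{(iv)} \Rightarrow \mathrm{(i)}$ is a weight-construction that mirrors the ordering. Writing $e_i = \{a_i, b_i\}$ where $a_i$ is the pendant endpoint in $\Gamma[\{e_1,\ldots,e_i\}]$, I would assign weights by reverse induction, or equivalently choose the pendant endpoints to carry large positive values on a rapidly growing scale so that each matched edge $e_i$ is made positive by its pendant endpoint regardless of the (bounded, possibly negative) contribution of the other end, while all non-matching edges are driven negative by giving every non-pendant vertex a suitably large negative weight. The only delicacy here is ordering the magnitudes so that the inequalities for later (larger-index) edges are not spoiled by the weights forced at earlier stages; choosing weights on a geometric scale indexed by the ordering makes the dominance argument routine. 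I anticipate the main obstacle to lie in $\mathrm{(i)} \Rightarrow \mathrm{(ii)}$, as the remaining steps are essentially bookkeeping once the signed-sum cancellation is in hand.
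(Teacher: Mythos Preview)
Your proposal is correct and follows the same cycle of implications as the paper; the paper closes the loop via $\mathrm{(iii)}\Rightarrow\mathrm{(i)}$ rather than $\mathrm{(iv)}\Rightarrow\mathrm{(i)}$, but the inductive weight extension is identical, and your telescoping alternating sum in $\mathrm{(i)}\Rightarrow\mathrm{(ii)}$ is a repackaging of the paper's chain of strict inequalities $x_1>y_1>\cdots>y_n>x_1$.

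One small point to tighten in $\mathrm{(ii)}\Rightarrow\mathrm{(iii)}$: a bare vertex repetition in your infinite alternating walk does not automatically give an alternating \emph{closed} walk, since the closed segment between the two occurrences could begin and end with edges of the same type (both in $M'$ or both in $M^c$) if the two positions have opposite parity. The fix is immediate---because $M'$ is a matching, the $M'$-edge at each vertex is unique, so the walk must revisit some vertex at two positions of the \emph{same} parity, and the segment between those positions is genuinely alternating when closed up. The paper sidesteps this parity bookkeeping by taking a maximal alternating path and doing a short case analysis on the neighbors of its endpoints.
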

\begin{proof}
First we show the equivalence of (i) and (ii).

(i)$\Rightarrow$(ii) Suppose on the contrary that $\Gamma[M]$ contains an alternating closed walk, say $u_1,v_1,\ldots,u_n,v_n,u_{n+1}=u_1$, where $u_iv_i\in M$ and $v_iu_{i+1}\notin M$. Since $M$ is positive, there exists a labeling $\rho \colon V(M)\longrightarrow\mathbb{Z}$ such that $\rho(u_i)+\rho(v_i)>0$ and $\rho(v_i)+\rho(u_{i+1})<0$, for all $i=1,\ldots,n$. Let $x_i=|\rho(u_i)|$ and $y_i=|\rho(v_i)|$, for $i=1,\ldots,n$. Without loss of generality, we can assume that $\rho(u_1)>0$ as $\rho(u_1)+\rho(v_1)>0$. Then $\rho(v_n)<0$, $\rho(u_n)>0$, $\rho(v_{n-1})<0$, $\rho(u_{n-1})>0$, etc. Thus $x_i=\rho(u_i)$ and $y_i=-\rho(v_i)$. The positivity of $M$ shows that 
\[x_1>y_1>x_2>y_2>\cdots>x_n>y_n>x_1,\]
which is a contradiction.

(ii)$\Rightarrow$(iii) Suppose $N\subseteq M$ and $\Gamma[N]$ has no pendants. By assumption, $\Gamma[N]$ has no alternating closed walks. Consider a maximal alternating path $P$ in $\Gamma[N]$, say
\[v_1,v_2,\ldots,v_{n-1},v_n.\]
If $v_1v_2\notin N$, then as $P$ is maximal, there must exist an integer $3\leq i\leq n$ such that $v_1v_i\in N$. Then either $i=n$ and $v_{n-1}v_n\notin N$ so that $v_1,v_2,\ldots,v_n,v_1$ is an alternating cycle, or $v_i$ lies on two edges of $N$, a contradiction. Thus $v_1v_2\in N$, and similarly $v_{n-1}v_n\in N$. As $v_1,v_n$ are not pendants and $P$ is maximal, there exists $1\leq i,j\leq n$ such that $v_1\sim v_i$ and $v_n\sim v_j$. Then either $v_1,v_2,\ldots,v_i,v_1$, or $v_j,v_{j+1},\ldots,v_n,v_j$, or
\[v_1,v_2,\ldots,v_n,v_j,v_{j-1},\ldots,v_i,v_1\]
if $i<j$ (see Figure \ref{figure: i<j}), or
\[v_1,v_2,\ldots,v_j,v_n,v_{n-1},\ldots,v_i,v_1\]
if $i>j$ (see Figure \ref{figure: i>j}), gives rise to an alternating closed walk, contradicting the assumption. 

(iii)$\Rightarrow$(i) Let $u$ be a pendant in $\Gamma[M]$. Then there exists $v\in V(M)$ such that $uv\in M$. Since $\Gamma-u$ satisfies the hypothesis, an inductive argument shows that $M\setminus\{uv\}$ is positive in $\Gamma-u$. Hence, there exists a labeling 
\[\rho \colon V(\Gamma[M]-u)\longrightarrow\mathbb{Z}\]
such that $\rho(x)+\rho(y)>0$ if $xy\in M$ and $\rho(x)+\rho(y)<0$ if $xy\in E(\Gamma[M]-u)\setminus M$. We extend $\rho$ to a map 
\[\rho^* \colon V(\Gamma[M])\longrightarrow\mathbb{Z}\]
by $\rho^*(u)>-\rho(v)$ and $\rho^*(x)=\rho(x)$ for all $x\in V(\Gamma[M]-u)$, establishing the positivity of $M$ (compare \cite[Lemma 5.2(2)]{ac-vw}).

The equivalence of (iii) and (iv) is obvious. The proof is complete.
\end{proof}

\begin{figure}
\centering
\begin{minipage}{.45\textwidth}
\begin{tikzpicture}[scale=0.5]
\node [draw, circle, fill=white, inner sep=1pt, label=below:\tiny{$1$}] (1) at (0, 0) {};
\node [draw, circle, fill=white, inner sep=1pt, label=below:\tiny{$2$}] (2) at (1.5, 0) {};
\node [draw, circle, fill=white, inner sep=1pt, label=below:\tiny{$3$}] (3) at (3, 0) {};
\node [draw, circle, fill=white, inner sep=1pt, label=below:\tiny{$i$}] (i) at (5, 0) {};
\node [draw, circle, fill=white, inner sep=1pt, label=below:\tiny{$j$}] (j) at (7, 0) {};
\node [draw, circle, fill=white, inner sep=1pt, label=below:\tiny{$n-2$}] (n-2) at (9, 0) {};
\node [draw, circle, fill=white, inner sep=1pt, label=below:\tiny{$n-1$}] (n-1) at (10.5, 0) {};
\node [draw, circle, fill=white, inner sep=1pt, label=below:\tiny{$n$}] (n) at (12, 0) {};
\node () at (0,1.75) {};

\draw [very thick] (1)--(2);
\draw (2)--(3);
\draw [thick, dashed] (3)--(i)--(j)--(n-2);
\draw [thick, dashed] (i) to [out=-45, in=-135] (j);
\draw (n-2)--(n-1);
\draw [very thick] (n-1)--(n);
\draw (1) to [out=45, in=135] (i);
\draw (j) to [out=45, in=135] (n);
\end{tikzpicture}  
\caption{$i<j$}
\label{figure: i<j}
\end{minipage}\hfill
\begin{minipage}{.45\textwidth}
\begin{tikzpicture}[scale=0.5]
\node [draw, circle, fill=white, inner sep=1pt, label=below:\tiny{$1$}] (1) at (0, 0) {};
\node [draw, circle, fill=white, inner sep=1pt, label=below:\tiny{$2$}] (2) at (1.5, 0) {};
\node [draw, circle, fill=white, inner sep=1pt, label=below:\tiny{$3$}] (3) at (3, 0) {};
\node [draw, circle, fill=white, inner sep=1pt, label=below:\tiny{$j$}] (j) at (5, 0) {};
\node [draw, circle, fill=white, inner sep=1pt, label=below:\tiny{$i$}] (i) at (7, 0) {};
\node [draw, circle, fill=white, inner sep=1pt, label=below:\tiny{$n-2$}] (n-2) at (9, 0) {};
\node [draw, circle, fill=white, inner sep=1pt, label=below:\tiny{$n-1$}] (n-1) at (10.5, 0) {};
\node [draw, circle, fill=white, inner sep=1pt, label=below:\tiny{$n$}] (n) at (12, 0) {};
\node () at (0,1.75) {};

\draw [very thick] (1)--(2);
\draw (2)--(3);
\draw [thick, dashed] (3)--(j);
\draw (j) to [out=45, in=135] (n);
\draw [thick, dashed] (i)--(n-2);
\draw (n-2)--(n-1);
\draw [very thick] (n-1)--(n);
\draw (1) to [out=45, in=135] (i);
\end{tikzpicture}
\caption{$i>j$}
\label{figure: i>j}
\end{minipage}
\end{figure}
\begin{remark}
Since in bipartite graphs every alternating closed walk with respect to a matching contains an alternating cycle, we may replace alternating closed walk by alternating cycle in the above theorem when studying bipartite graphs.
\end{remark}

Let $P=(V,<)$ be a finite partially ordered set and $L:x_1,\ldots,x_n$ be a \textit{linear extension} of $P$ that is $x_i <_P x_j$ implies $x_i <_L x_j$. A pair $(x_i,x_{i+1})$ is a \textit{jump} in $L$ if $x_i$ and $x_{i+1}$ are incomparable in $P$. The \textit{jump number} $\mathrm{jump}(P)$ of $P$ is the minimum number of jumps in all possible linear extensions of $P$. To any partially ordered set there corresponds a comparability graph $C(P)$ with vertex set $V$ such that $xy$ is an edge if $x$ and $y$ are comparable in $P$. We define the jump number of $C(P)$ to be the jump number of $P$. Note that every bipartite graph is the  comparability graph of some poset. In \cite{gc-mc}, Chaty and Chein show that a maximum positive matching $M$ in a bipartite graph $\Gamma$ satisfies the equation
\[|M|+\mathrm{jump}(\Gamma)=|V|-1.\]
The problem of computing the jump number of a poset is known to be NP-complete even for those arising from bipartite graphs (see \cite{wrp}). Since computing the maximum size of a positive matching in bipartite graphs is equivalent to that of the jump number of the graphs, it turns out that maximizing positive matchings in a bipartite graph is also NP-complete. M\"{u}ller \cite{hm} shows the stronger result that computing the maximum positive matching size in the class of chordal bipartite graphs is also NP-complete, hence, by applying the Chaty and Chein's result, computing the jump number of these graphs is NP-complete as well.

An easy application of the above theorem yields the following results immediately.
\begin{example}\label{simple pmds}
We have
\begin{itemize}
\item[(i)]$\pmd(\Gamma)=1$ if and only if $\Gamma$ is a union of some edges and isolated vertices.
\item[(ii)]$\pmd(\Gamma)=2$ if and only if $\Gamma$ is a union of some paths not all of length zero or one.
\item[(iii)]$\pmd(C_n)=3$, for all $n\geq3$.
\item[(iv)]$\pmd(\Gamma)\leq|E(\Gamma)|$ for every graph $\Gamma$ with equality if and only if $\Gamma$ is a union of an star graph and some isolated vertices.
\item[(v)]$\pmd(\Gamma)=\max\{\pmd(S)\colon\quad S\ \text{is a connected component of}\ \Gamma\}$.
\end{itemize}
\end{example}

According to part (v) of the above example, from now on, all graphs under considerations are assumed to be connected.

The above example gives a simple classification of all graphs with pmd at most two. However, as the following proposition shows such a classification is far reaching for graphs with pmd at least three. Recall that a \textit{subdivision} of a graph $\Gamma$ is a graph $\Gamma'$ obtained from $\Gamma$ by replacing edges of $\Gamma$ by paths of length at least one. Notice that $\Delta(\Gamma')=\Delta(\Gamma)$ for every subdivision $\Gamma'$ of a connected graph $\Gamma$ provided that $\Gamma$ has at least three vertices. A \textit{starlike graph} is any subdivision of a star graph.
\begin{proposition}\label{subdivision}
Every graph $\Gamma$ with maximum valency at least three has a subdivision $\Gamma'$ satisfying $\pmd(\Gamma')=\Delta(\Gamma')$.
\end{proposition}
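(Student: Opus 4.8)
The plan is to prove the two inequalities $\pmd(\Gamma')\ge\Delta(\Gamma')$ and $\pmd(\Gamma')\le\Delta(\Gamma')$ for a suitably chosen subdivision $\Gamma'$. The first is free: by Proposition \ref{general bounds for pmds}(iii) every graph satisfies $\pmd\ge\Delta$, and since $\Gamma$ is connected with at least four vertices (as $\Delta(\Gamma)\ge 3$), the remark preceding the proposition gives $\Delta(\Gamma')=\Delta(\Gamma)=:\Delta$ for every subdivision $\Gamma'$. So the entire content is to exhibit one subdivision $\Gamma'$ admitting a pmd with only $\Delta$ parts.

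The key reduction is this: by Theorem \ref{positivity of matchings}, a matching $M$ is positive as soon as the induced graph $\Gamma'[M]$ is a forest. Indeed every subset $N\subseteq M$ then induces a forest, which (if nonempty) has a leaf, and that leaf's unique edge is the $M$-edge covering it, so condition (iii) of Theorem \ref{positivity of matchings} holds. Moreover, if $M$ induces a forest in all of $\Gamma'$ it induces a forest in every subgraph obtained by deleting edges, so it remains positive there. Hence it suffices to partition $E(\Gamma')$ into $\Delta$ matchings each of which induces a forest in $\Gamma'$: any ordering of such a partition is automatically a pmd.

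I would therefore subdivide every edge of $\Gamma$ into a path of a fixed length $L$ and seek a proper $\Delta$-edge-colouring of $\Gamma'$ whose colour classes all induce forests. Since the degree-two subdivision vertices force every cycle of $\Gamma'$ to be the subdivision of a cycle of $\Gamma$, a colour class $M_c$ fails to induce a forest exactly when some cycle of $\Gamma'$ has all of its vertices matched by $M_c$; this in turn requires colour $c$ to cover every internal vertex of each subdivided path lying on that cycle. Thus the target reduces to a purely local condition on each path: arrange the colouring so that for every colour $c$, at least one internal vertex of every path is missed by $c$ (equivalently, two consecutive edges of the path avoid $c$). Properness at the branch vertices is handled by assigning, at each vertex of $\Gamma$, distinct colours to the first edges of the $\le\Delta$ paths leaving it (possible as there are at most $\Delta$ such paths), and the path interiors are coloured by a $3$-periodic pattern on three fixed colours, matched up at the two ends.

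The main obstacle — and the reason $L$ cannot be small — is precisely this no-saturation condition. A single subdivision (or a path of length $3$) is useless, because the middle edge of a length-$3$ path has some colour, and that colour covers both of its internal vertices; more generally a $2$-colour alternation along a path saturates it with each of its two colours, which is the local incarnation of the fact $\pmd(C_n)=3>2$. Taking $L$ large enough (say $L\ge 5$) creates room for the $3$-periodic interior pattern to leave, for each of its three colours, a two-edge gap, while any colour used only at an endpoint trivially misses all but one internal vertex. Once this local condition is verified on every path, no cycle of $\Gamma'$ is monochromatically saturated, every colour class induces a forest, and the resulting $\Delta$ positive matchings yield $\pmd(\Gamma')\le\Delta$, completing the proof.
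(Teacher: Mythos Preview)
Your approach is sound and genuinely different from the paper's. The paper also subdivides every edge into a path of length~$5$, but then peels off a \emph{single} positive matching $M$: the middle edges $u'_ev'_e$ together with one first edge $e_u$ at every branch vertex~$u$. The point is that $\Gamma'-M$ is a disjoint union of starlike trees of maximum degree $\le\Delta-1$, so $\pmd(\Gamma')\le 1+\pmd(\Gamma'-M)\le\Delta$ by the tree case. In contrast, you aim for a symmetric decomposition into $\Delta$ matchings each inducing a forest in~$\Gamma'$, so that \emph{any} ordering is a pmd --- a strictly stronger conclusion, and one that sidesteps the inductive appeal to forests.

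One small correction: the bound ``$L\ge 5$'' does not quite suffice. With $\Delta=3$, only three colours are available, and if the two endpoint edges of some path receive the \emph{same} colour~$a$ (nothing in your branch-vertex assignment prevents this, short of a proper $\Delta$-edge-colouring of~$\Gamma$, which need not exist), then a length-$5$ path forces $e_2,e_3,e_4\in\{1,2,3\}\setminus\{a\}$ to be pairwise distinct --- impossible. Taking $L\ge 6$ removes the obstruction: for instance $a,2,3,2,3,a$ (when $a=1$) leaves each colour with a two-edge gap, and the remaining cases are handled similarly. So your strategy goes through once ``say $L\ge 5$'' is replaced by $L\ge 6$; the paper's more economical $L=5$ works only because its single matching~$M$ is tailored rather than obtained from an edge-colouring.
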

\begin{proof}
Let $\Gamma'$ be the graph obtained from $\Gamma$ by replacing every edge $e=uv$ of $\Gamma$ by a path $u,u_e,u'_e,v'_e,v_e,v$ of length $5$. Clearly, the set 
\[M_0:=\{u'_ev'_e \colon\quad e=uv\in E(\Gamma)\}\]
is a matching of $\Gamma'$ whose removal is a union of starlike graphs induced by vertex sets $\{u\}\cup\{u_e,u'_e\colon\ u\in e\in E(\Gamma)\}$ with root $u$, for all vertices $u\in V(\Gamma)$. Let $e_u$ be an edge incident with $u$ in $\Gamma$, for each vertex $u\in V(\Gamma)$. By Theorem \ref{positivity of matchings},
\[M:=\{e_u\colon\quad u\in V(\Gamma)\}\cup M_0\]
is a positive matching whose removal is a union of starlike graphs (of maximum valencies at most $\Delta(\Gamma')-1$) and paths of length two (see Figure \ref{figure: subdivision}). Therefore, 
\[\pmd(\Gamma')\leq1+\pmd(\Gamma'-M)\leq\Delta(\Gamma'),\]
from which the result follows.
\end{proof}

\begin{figure}[h]
\begin{tabular}{ccc}
\begin{tikzpicture}[scale=0.4]
\node [draw, circle, fill=white, inner sep=2pt, label=225:\tiny{$a$}] (00) at (0, 0) {};
\node [draw, circle, fill=white, inner sep=2pt, label=315:\tiny{$b$}] (50) at (10, 0) {};
\node [draw, circle, fill=white, inner sep=2pt, label=45:\tiny{$c$}] (55) at (10, 10) {};
\node [draw, circle, fill=white, inner sep=2pt, label=135:\tiny{$d$}] (05) at (0, 10) {};

\node [label=above:\tiny{$\alpha$}] () at (5,0) {};
\node [label=left:\tiny{$\beta$}] () at (10,5) {};
\node [label=below:\tiny{$\gamma$}] () at (5,10) {};
\node [label=right:\tiny{$\delta$}] () at (0,5) {};
\node [label=315:\tiny{$\epsilon$}] () at (5,5) {};

\draw (00)--(50)--(55)--(05)--(00)--(55);
\end{tikzpicture}  
&&
\begin{tikzpicture}[scale=0.4]
\node [draw, circle, fill=white, inner sep=1pt, label=above:\tiny{$a_\alpha$}] (10) at (2, 0) {};
\node [draw, circle, fill=white, inner sep=1pt, label=above:\tiny{$a'_\alpha$}] (20) at (4, 0) {};
\node [draw, circle, fill=white, inner sep=1pt, label=above:\tiny{$b'_\alpha$}] (30) at (6, 0) {};
\node [draw, circle, fill=white, inner sep=1pt, label=above:\tiny{$b_\alpha$}] (40) at (8, 0) {};
\node [draw, circle, fill=white, inner sep=2pt, label=315:\tiny{$b$}] (50) at (10, 0) {};
\node [draw, circle, fill=white, inner sep=1pt, label=left:\tiny{$b_\beta$}] (51) at (10, 2) {};
\node [draw, circle, fill=white, inner sep=1pt, label=left:\tiny{$b'_\beta$}] (52) at (10, 4) {};
\node [draw, circle, fill=white, inner sep=1pt, label=left:\tiny{$c'_\beta$}] (53) at (10, 6) {};
\node [draw, circle, fill=white, inner sep=1pt, label=left:\tiny{$c_\beta$}] (54) at (10, 8) {};
\node [draw, circle, fill=white, inner sep=2pt, label=45:\tiny{$c$}] (55) at (10, 10) {};
\node [draw, circle, fill=white, inner sep=1pt, label=below:\tiny{$c_\gamma$}] (45) at (8, 10) {};
\node [draw, circle, fill=white, inner sep=1pt, label=below:\tiny{$c'_\gamma$}] (35) at (6, 10) {};
\node [draw, circle, fill=white, inner sep=1pt, label=below:\tiny{$d'_\gamma$}] (25) at (4, 10) {};
\node [draw, circle, fill=white, inner sep=1pt, label=below:\tiny{$f_\gamma$}] (15) at (2, 10) {};
\node [draw, circle, fill=white, inner sep=2pt, label=135:\tiny{$d$}] (05) at (0, 10) {};
\node [draw, circle, fill=white, inner sep=1pt, label=right:\tiny{$d_\delta$}] (04) at (0, 8) {};
\node [draw, circle, fill=white, inner sep=1pt, label=right:\tiny{$d'_\delta$}] (03) at (0, 6) {};
\node [draw, circle, fill=white, inner sep=1pt, label=right:\tiny{$a'_\delta$}] (02) at (0, 4) {};
\node [draw, circle, fill=white, inner sep=1pt, label=right:\tiny{$a_\delta$}] (01) at (0, 2) {};
\node [draw, circle, fill=white, inner sep=2pt, label=225:\tiny{$a$}] (00) at (0, 0) {};
\node [draw, circle, fill=white, inner sep=1pt, label=below:\tiny{$a_\epsilon$}] (11) at (2, 2) {};
\node [draw, circle, fill=white, inner sep=1pt, label=below:\tiny{$a'_\epsilon$}] (22) at (4, 4) {};
\node [draw, circle, fill=white, inner sep=1pt, label=below:\tiny{$c'_\epsilon$}] (33) at (6, 6) {};
\node [draw, circle, fill=white, inner sep=1pt, label=below:\tiny{$c_\epsilon$}] (44) at (8, 8) {};
\node [label=below:\tiny{$e_a$}] () at (1,0) {};
\node [label=right:\tiny{$e_b$}] () at (10,1) {};
\node [label=above:\tiny{$e_c$}] () at (9,10) {};
\node [label=left:\tiny{$e_d$}] () at (0,9) {};

\draw (00)--(10)--(20)--(30)--(40)--(50)--(51)--(52)--(53)--(54)--(55)--(45)--(35)--(25)--(15)--(05)--(04)--(03)--(02)--(01)--(00)--(11)--(22)--(33)--(44)--(55);
\draw [very thick] (20)--(30) (25)--(35) (02)--(03) (52)--(53) (22)--(33);
\draw [very thick] (00)--(10) (50)--(51) (45)--(55) (04)--(05);
\end{tikzpicture}\\
Graph $\Gamma$&&Subdivision graph $\Gamma'$
\end{tabular}
\caption{Positive matching $M=\{a'_\alpha b'_\alpha, b'_\beta c'_\beta, c'_\gamma d'_\gamma, d'_\delta a'_\delta, a'_\epsilon c'_\epsilon, e_a, e_b, e_c, e_d\}$ of $\Gamma'$}
\label{figure: subdivision}
\end{figure}
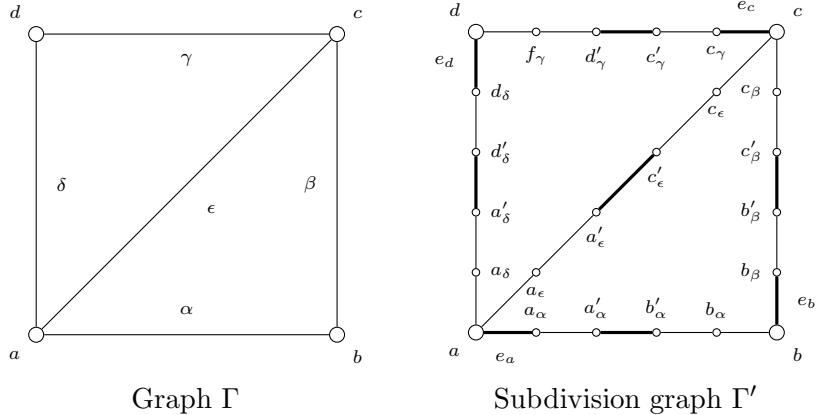

In spite of the fact that the classification of graphs with a given pmd is hopelessly difficult, the next theorem provides us with a reduction in the sense that we can always assume that the graphs under considerations have no pendants.
\begin{definition}
A \textit{corona} graph of a graph $\Gamma$ is a graph obtained from $\Gamma$ by attaching some pendants to every vertex of $\Gamma$. An \textit{antler} graph (or \textit{hartshorne} graph) of a graph $\Gamma$ is a graph obtained by successively making coronas of $\Gamma$. Equivalently, an antler graph of $\Gamma$ is a graph obtained from $\Gamma$ by attaching the root of a rooted tree $T_v$ to every vertex $v$ of $\Gamma$.
\end{definition}
\begin{theorem}\label{pmd of antler graphs}
Let $\Gamma$ be a graph and $\Gamma'$ be an antler graph of $\Gamma$. Then
\[\pmd(\Gamma') = \max\{\pmd(\Gamma), \Delta(\Gamma')\}.\]
\end{theorem}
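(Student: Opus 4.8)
The plan is to prove the two inequalities separately, reducing the general antler graph to the case of a single corona by induction on the number of corona steps.

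For the lower bound, note that $\pmd(\Gamma')\ge\Delta(\Gamma')$ is immediate from Proposition \ref{general bounds for pmds}(iii). To see $\pmd(\Gamma')\ge\pmd(\Gamma)$, take any pmd $F_1,\ldots,F_p$ of $\Gamma'$ and restrict it to the core by setting $E_i:=F_i\cap E(\Gamma)$. Each $E_i$ is a matching contained in $F_i$, and since $(\Gamma-\bigcup_{j<i}E_j)[V(E_i)]$ is a subgraph of $(\Gamma'-\bigcup_{j<i}F_j)[V(F_i)]$ on which the two matchings agree, any alternating closed walk witnessing non-positivity of $E_i$ would also be one for $F_i$; by Theorem \ref{positivity of matchings}(ii) each $E_i$ is therefore positive in $\Gamma-\bigcup_{j<i}E_j$. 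Discarding empty parts gives a pmd of $\Gamma$ with at most $p$ parts, so $\pmd(\Gamma)\le p=\pmd(\Gamma')$. (This argument in fact shows that $\pmd$ is monotone under passage to subgraphs.) Combining the two bounds yields $\pmd(\Gamma')\ge\max\{\pmd(\Gamma),\Delta(\Gamma')\}$.

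For the upper bound I would first treat a single corona $\Gamma^+$, obtained from a graph $H$ by attaching $p_v\ge0$ pendants to each vertex $v$, and show $\pmd(\Gamma^+)\le\max\{\pmd(H),\Delta(\Gamma^+)\}=:p$. Fix a minimal pmd $E_1,\ldots,E_q$ of $H$ (so $q=\pmd(H)\le p$) and initialize $p$ matchings by $F_i:=E_i$ for $i\le q$ and $F_i:=\varnothing$ for $q<i\le p$. At each vertex $v$ of $H$ the number of indices $i$ with $v\notin V(F_i)$ equals $p-\deg_H(v)\ge\deg_{\Gamma^+}(v)-\deg_H(v)=p_v$, using $\Delta(\Gamma^+)\le p$; hence the $p_v$ pendant edges at $v$ can be placed into $p_v$ distinct currently-free matchings. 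Performing this at every $v$ (assignments at distinct vertices never conflict, since the leaves are disjoint) produces $p$ matchings $F_1,\ldots,F_p$ partitioning $E(\Gamma^+)$.

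The crux is to verify that each $F_i$ is positive in $\Gamma^+-\bigcup_{j<i}F_j$, for which I would invoke Theorem \ref{positivity of matchings}(ii). Two observations drive this: a vertex of degree one cannot lie on any alternating closed walk (entering and leaving it would repeat the single incident edge, breaking alternation), and every vertex met by an alternating closed walk with respect to $E_i$ must be covered by $E_i$ (at each visit the two incident walk-edges alternate, so one lies in $E_i$). The first removes all attached leaves from a hypothetical alternating closed walk, and the second removes every core vertex outside $V(E_i)$; what survives is an alternating closed walk with respect to $E_i$ inside $(H-\bigcup_{j<i}E_j)[V(E_i)]$, which cannot exist because $E_i$ is positive in $H-\bigcup_{j<i}E_j$. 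Hence $F_1,\ldots,F_p$ is a pmd, establishing the single-corona bound. Finally, write the antler graph as a chain $\Gamma=\Gamma_0,\Gamma_1,\ldots,\Gamma_k=\Gamma'$ with each $\Gamma_{j+1}$ a corona of $\Gamma_j$ (attaching possibly zero pendants to every vertex realizes the rooted trees $T_v$ level by level). Combining the single-corona construction with the lower bound applied to $\Gamma_j\subseteq\Gamma_{j+1}$ gives $\pmd(\Gamma_{j+1})=\max\{\pmd(\Gamma_j),\Delta(\Gamma_{j+1})\}$, and since coronas only increase degrees we have $\Delta(\Gamma_j)\le\Delta(\Gamma_{j+1})$; a short induction then collapses the nested maxima to $\pmd(\Gamma')=\max\{\pmd(\Gamma),\Delta(\Gamma')\}$. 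The main obstacle is the positivity verification above: once the leaf-removal and $E_i$-coverage observations are isolated, everything else is bookkeeping.
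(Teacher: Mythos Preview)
Your proof is correct and follows essentially the same two-step architecture as the paper: establish the equality for a single corona, then induct along the chain of coronas. The only noteworthy differences are cosmetic. Where the paper fills the pendant edges into the matchings sequentially (defining $M'_i$ in terms of $M'_1,\ldots,M'_{i-1}$) and then appeals to Theorem~\ref{positivity of matchings}(iii) for positivity, you pad the pmd of $H$ to $p$ matchings up front, drop each pendant edge into any free slot, and verify positivity via Theorem~\ref{positivity of matchings}(ii) by arguing that leaves (being degree~one) cannot sit on an alternating closed walk and hence any such walk lives entirely in the core. Both routes are valid; yours makes the positivity check more explicit, while the paper's sequential filling makes the appeal to (iii) immediate. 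Your lower-bound paragraph also spells out the monotonicity $\pmd(\Gamma)\le\pmd(\Gamma')$ that the paper simply calls ``obvious.''
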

\begin{proof}
First assume that $\Gamma'$ is a corona of $\Gamma$. For every vertex $v$ of $\Gamma$ let $X_v := \{e^v_1,\ldots,e^v_{n_v}\}$ be the set of $n_v$ edges of $E(\Gamma') \setminus E(\Gamma)$ incident to $v$, and put $X:=\bigcup_{v\in V(\Gamma)}X_v$. Let $M_1, \ldots, M_p$ be a pmd of $\Gamma$ with $p=\pmd(\Gamma)$, and $i:=\max\{\Delta(\Gamma') - \pmd(\Gamma), 0\}$. Then $\Delta(\Gamma')\leq p+i$ and hence $\deg_\Gamma(v)+n_v\leq p + i$, for all $v\in V(\Gamma)$. Let 
\[M'_1 := M_1 \cup \{e^v_1 \colon\quad v \notin V(M_1)\}\]
and
\[M'_i := M_i \cup \{e^v_j \colon\quad e^v_{j-1} \in M'_1\cup\cdots\cup M'_{i-1},\ e^v_j\notin  M'_1\cup\cdots\cup M'_{i-1},\ \text{and}\ v\notin V(M_i)\},\]
for all $i=2,\ldots,p$. If $M'_{p+1}, \ldots, M'_{p'}$ is a pmd of minimum size for $\Gamma' \setminus (M'_1\cup\cdots\cup M'_p)$, then $M'_1, \ldots, M'_{p'}$ is a pmd of $\Gamma'$ by Theorem \ref{positivity of matchings}(iii). It is evident that $p'=p+i$. This shows that $\pmd(\Gamma')\leq p+i=\max\{p,\Delta(\Gamma')\}$. The reverse inequality is obvious and so $\pmd(\Gamma') = \max\{p, \Delta(\Gamma')\}$.

Now, let $\Gamma'$ be an antler graph of $\Gamma$. By the definition, there exists a sequence of distinct graphs $\Gamma=\Gamma_0,\Gamma_1,\ldots,\Gamma_n=\Gamma'$
such that $\Gamma_i$ is a corona graph of $\Gamma_{i-1}$, for $i=1,\ldots,n$. Then 
\begin{align*}
\pmd(\Gamma')=\pmd(\Gamma_n)&=\max\{\pmd(\Gamma_{n-1}), \Delta(\Gamma_n)\}\\
&=\max\{\pmd(\Gamma_{n-2}), \Delta(\Gamma_n)\}\\
&\hspace{0.2cm}\vdots\\
&=\max\{\pmd(\Gamma_0), \Delta(\Gamma_n)\}=\max\{\pmd(\Gamma), \Delta(\Gamma')\}.
\end{align*}
The proof is complete.
\end{proof}

Since trees are antler graphs of a single vertex, we obtain a new proof of the equality in Proposition \ref{general bounds for pmds}(iii).
\begin{corollary}\label{pmd of trees}
If $\Gamma$ is a tree, then $\pmd(\Gamma)=\Delta(\Gamma)$.
\end{corollary}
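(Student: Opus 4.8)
The plan is to realize the tree $\Gamma$ as an antler graph of a single vertex and then read the result directly off Theorem~\ref{pmd of antler graphs}. Let $\Gamma_0$ be the graph consisting of a single vertex $r$. By the definition, an antler graph of $\Gamma_0$ is obtained by attaching the root of a rooted tree $T_r$ to the unique vertex $r$; choosing $T_r$ to be $\Gamma$ rooted at an arbitrary vertex $r$ shows that $\Gamma$ is precisely an antler graph of $\Gamma_0$. Thus every tree arises in this way, and the reduction to the theorem is available.

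Next I would record the pmd of the base graph. Since $\Gamma_0$ has no edges, the empty partition ($p=0$) is vacuously a positive matching decomposition, so $\pmd(\Gamma_0)=0$. Applying Theorem~\ref{pmd of antler graphs} to the pair $(\Gamma_0,\Gamma)$ then yields
\[\pmd(\Gamma)=\max\{\pmd(\Gamma_0),\Delta(\Gamma)\}=\max\{0,\Delta(\Gamma)\}=\Delta(\Gamma),\]
which is exactly the desired equality. In the degenerate case where $\Gamma=\Gamma_0$ is itself a single vertex, both sides equal $0$, so that situation is covered as well, and otherwise $\Delta(\Gamma)\geq 1$ makes the maximum explicit.

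There is essentially no analytic obstacle here, since the entire weight of the argument has already been absorbed into Theorem~\ref{pmd of antler graphs}; the corollary is just a matter of selecting the base graph correctly. The only point deserving a moment's care is the bookkeeping in the antler construction: one should confirm that the two formulations given in the definition --- iterated coronas on the one hand, and a single rooted tree $T_v$ attached at each vertex on the other --- genuinely agree for a one-vertex base, so that an arbitrary tree is a legitimate choice of $T_r$. As the definition already asserts the equivalence of these two descriptions, this requires no further argument, and the corollary follows at once.
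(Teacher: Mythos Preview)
Your proof is correct and follows exactly the paper's approach: the paper remarks that trees are antler graphs of a single vertex and deduces the corollary immediately from Theorem~\ref{pmd of antler graphs}. Your write-up merely spells out this one-line observation in more detail.
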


As a consequence of Theorem \ref{pmd of antler graphs} and Corollary \ref{pmd of trees}, we show that the pmd of a class of graphs, known as cactus graphs, which is a natural generalization of trees is equal to either the maximum valency $\Delta$ or $\Delta+1$. In the following, $S(\Gamma)$ stands for the \textit{subdivision graph} of a graph $\Gamma$ obtained by subdividing every edge of $\Gamma$ by one vertex.
\begin{definition}
A \textit{cactus} is a connected graph in which any two cycles have at most one vertex in common.
\end{definition}

A \textit{cut-vertex} in a graph is a vertex whose removal increases the number of connected components. It is evident that every cactus graph that is neither an edge nor a cycle has a cut-vertex.
\begin{corollary}
Let $\Gamma$ be a cactus. Then 
\[\Delta(\Gamma)\leq\pmd(\Gamma)\leq\Delta(\Gamma)+1.\]
Moreover, $\pmd(\Gamma)=\Delta(\Gamma)$ if $\Gamma$ is a triangle-free graph which is not a cycle.
\end{corollary}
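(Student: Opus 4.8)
The lower bound $\Delta(\Gamma)\le\pmd(\Gamma)$ is immediate from Proposition~\ref{general bounds for pmds}(iii), so the whole content lies in the two upper bounds. My plan for $\pmd(\Gamma)\le\Delta(\Gamma)+1$ is to produce a \emph{single} positive matching that can act as the first part of a pmd and whose deletion leaves a forest. The key lemma I would establish is: \emph{every cactus $\Gamma$ admits a matching $F$ containing exactly one edge of each cycle which is positive in $\Gamma$.} Granting this, $\Gamma-F$ is a spanning forest, since in a cactus the cycles are pairwise edge-disjoint (two cycles meet in at most one vertex), so removing one edge from each kills the entire cycle space. By Corollary~\ref{pmd of trees} we then have $\pmd(\Gamma-F)=\Delta(\Gamma-F)\le\Delta(\Gamma)$, and taking $F$ as the first part followed by an optimal pmd of $\Gamma-F$ gives
\[\pmd(\Gamma)\le 1+\pmd(\Gamma-F)=1+\Delta(\Gamma-F)\le\Delta(\Gamma)+1.\]
The appended parts satisfy the pmd condition verbatim precisely because $F$ has already been deleted, so when one tests part $E_i$ the ambient graph is $(\Gamma-F)-(E_2\cup\cdots\cup E_{i-1})$.

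The heart of the matter is the lemma, which I would prove by induction, ordered lexicographically on (number of cycles, number of vertices). If $\Gamma$ has a pendant vertex, I delete it: this changes neither the cycles nor $V(F)$ (pendant vertices lie on no cycle), so an $F$ for $\Gamma$ minus the pendant remains a positive matching of $\Gamma$ because $\Gamma[V(F)]$ is unchanged — this is exactly the point at which Theorem~\ref{pmd of antler graphs} enters morally. If $\Gamma$ is pendant-free, then every leaf block of the block–cut tree is a cycle $C$, attached at a single cut vertex $v$; I set $\Gamma_0:=\Gamma-\operatorname{int}(C)$, obtain $F_0$ inductively, and adjoin an edge $e$ of $C$ not incident to $v$. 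The two endpoints of $e$ are fresh vertices of degree $2$, so in $\Gamma[V(F)]$ they are pendant or isolated, and Theorem~\ref{positivity of matchings}(iii) guarantees no alternating closed walk is created. The one delicate sub-case is a triangle both of whose non-$v$ vertices are adjacent to $v$ with $v\in V(F_0)$; here I would argue directly that any alternating closed walk using $e$ must enter $v$ along two non-matching edges, contradicting alternation. This positivity bookkeeping is the main obstacle in the first bound, but it is local and self-contained.

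For the ``moreover'' statement I would first dispose of the forest case by Corollary~\ref{pmd of trees}. Otherwise $\Gamma$ contains a cycle and is not a single cycle, so some vertex of a cycle has a further neighbour and $\Delta(\Gamma)\ge 3$, while triangle-freeness forces every cycle to have length at least $4$. I would then prove $\pmd(\Gamma)\le\Delta(\Gamma)$ by peeling a leaf cycle $C$ at its cut vertex $v$ and re-inserting it into an optimal pmd of $\Gamma_0=\Gamma-\operatorname{int}(C)$ \emph{without adding a part}. The crucial counting is that $\deg_{\Gamma_0}(v)\le\Delta(\Gamma)-2$, so $v$ is covered by at most $\Delta(\Gamma)-2$ of the (at most $\Delta(\Gamma)$) parts, leaving at least two parts free of $v$; the two $v$-edges of $C$ go into these free parts, and the interior edges of $C$ (a path of length $\ge 2$, all of whose vertices are fresh) are distributed among the remaining parts so that no part receives two ``opposite'' edges of $C$. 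I expect this re-insertion, carried out so as to respect the fixed relative order of $\Gamma_0$'s parts and never recreate the alternating closed walk of $C$, to be the genuinely technical step — and it is essential to use peeling here rather than the one-shot matching $F$ above, since a positive matching breaking every cycle while also covering every maximum-degree vertex need not exist (for instance for a $4$-cycle with a pendant $4$-cycle at each of its vertices).
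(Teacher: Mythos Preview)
Your key lemma for the bound $\pmd(\Gamma)\le\Delta(\Gamma)+1$ --- that every cactus admits a positive matching $F$ containing exactly one edge of each cycle --- is false. Take $\Gamma$ to be a central triangle $T_0=c_1c_2c_3$ with a pendant triangle $T_i=c_ia_ib_i$ attached at each $c_i$. Any matching meeting each cycle exactly once must use one edge of $T_0$, say $c_1c_2$; since $c_1,c_2$ are now matched, the edges of $F$ coming from $T_1$ and $T_2$ are forced to be $a_1b_1$ and $a_2b_2$. For the subset $N=\{c_1c_2,a_1b_1,a_2b_2\}\subseteq F$ the induced graph $\Gamma[V(N)]$ consists of the two triangles $T_1,T_2$ joined by the edge $c_1c_2$, and every vertex has degree at least~$2$; hence $N$ has no pendant edge and $F$ is not positive by Theorem~\ref{positivity of matchings}(iii). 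Since any matching breaking all four triangles must look like this up to symmetry, \emph{no} positive matching of this $\Gamma$ has a forest as its complement, so your one-step reduction to a forest is simply unavailable here. (Your proposed handling of the triangle sub-case is also not correct as stated: an alternating closed walk through $e=ab$ passes through $v$ twice, but at each visit it alternates properly between the matching edge $vw$ and a non-matching edge $va$ or $vb$, so no contradiction with alternation arises.)

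The paper circumvents this by never trying to reach a forest in a single step. For $\Delta(\Gamma)>3$ it constructs a positive matching $M$ covering every vertex of maximum degree (via an induction that peels off a suitable branch at such a vertex), so that $\Delta(\Gamma-M)=\Delta(\Gamma)-1$, and then recurses; the case $\Delta(\Gamma)\le3$ is treated separately by first removing one edge from each (now pairwise disjoint) triangle and invoking the triangle-free statement. For the ``moreover'' clause the paper likewise produces, in one shot, a positive matching that covers all maximum-degree vertices \emph{and} leaves a tree, by an induction that peels a pendant \emph{path} rather than a leaf cycle. Your leaf-cycle re-insertion scheme for this second part is plausible in outline but would still need care with the base case (what if $\Gamma_0$ is itself a cycle?) and with verifying positivity of the augmented parts in the correct residual graphs.
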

\begin{proof}
Let $\Delta:=\Delta(\Gamma)$. First we prove that $\pmd(\Gamma)=\Delta$ when $\Gamma$ is a triangle-free cactus graph which is not a cycle. We show that $\Gamma$ has a positive matching $M$ covering all vertices of maximum valency such that $\Gamma - M$ is a tree. From the definition, it follows that $\Gamma$ has a cut-vertex $u$ such that one of the connected components of $\Gamma-u$ is a path $P$. Let $v$ be a neighbor of $u$ in $P$. If $\Gamma-v$ is a cycle, then define $M':=\{uw\}$ for any neighbor $w$ of $u$ in $\Gamma-v$. Otherwise, by an inductive argument, $\Gamma-v$ has a positive matching $M'$ with the required properties. Let $E=\emptyset$ if either $v$ is a pendant or $V(M')\cap V(P)\neq\emptyset$, and $E=\{vw\}$ otherwise, where $w\neq u$ is a neighbor of $v$. Let
\[M:=\begin{cases}
M'\cup E,&u\in V(M'),\\
M'\cup\{uv\},&u\notin V(M').
\end{cases}\]
Then $M$ is a positive matching of $\Gamma$ with the mentioned properties. Now since $\Gamma-M$ is a tree of maximum valency $\Delta-1$, it follows that $\pmd(\Gamma)=\Delta$ by Corollary \ref{pmd of trees}.

Now, we prove that $\pmd(\Gamma)\leq\Delta+1$, for any cactus graph $\Gamma$. If $\Delta\leq3$, then triangles of $\Gamma$ are pairwise disjoint so that a matching $M$ possessing an edge from every triangle of $\Gamma$ is a positive matching in $\Gamma$. Since $\Gamma-M$ is triangle free, it follows that $\pmd(\Gamma)\leq\pmd(\Gamma-M)+1\leq\Delta+1$ provided that $\Gamma$ is not a cycle. If $\Gamma$ is a cycle, then $\pmd(\Gamma)=\Delta+1$, as required. In what follows, we assume that $\Delta>3$. We claim that $\Gamma$ has a positive matching covering all vertices of maximum valency. Suppose the result holds for graphs of smaller order or size. Since $\Gamma$ is a cactus graph, there exists a vertex $u\in V(\Gamma)$ of maximum valency such that $S(\Gamma)-u$ has a connected component $C$ all of whose vertices are of valency less than $\Delta$. Let $X:=V(C)\cap V(\Gamma)$. By assumption, $\Gamma-X$ has a positive matching $M$ covering all vertices of maximum valency. Let $v\in X$ be a neighbor of $u$ in $\Gamma$. Put $M':=M$ if $u\in M$ and $M':=M\cup\{uv\}$ if $u\notin M$. Then $M'$ is a positive matching of $\Gamma$ covering all vertices of maximum valency. Since $\Gamma-M_1$ has fewer edges than $\Gamma$ and $\Delta(\Gamma-M_1)=\Delta-1$, it follows from the hypothesis that 
\[\pmd(\Gamma)\leq\pmd(\Gamma-M_1)+1\leq\Delta+1.\]
The proof is complete.
\end{proof}
\begin{problem}
Give a characterization of all those cactus graphs whose pmd coincide with their maximum valency.
\end{problem}
\begin{example}
A graph obtained from a cycle by attaching a triangle via an edge to any of its vertices has maximum valency $3$ while its pmd is equal to $4$.
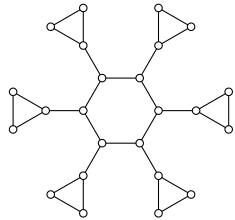
\begin{figure}[h]
\begin{tikzpicture}[scale=0.5]
\foreach \x in {0,...,5}
{	
	\draw[rotate={\x*60}] (0.5, {sqrt(3) / 2})--(1, 0)--(2, 0)--({2 + sqrt(3) / 2}, 0.5)--({2 + sqrt(3) / 2}, -0.5)--(2, 0);
}

\foreach \x in {0,...,5}
{	
	\draw[rotate={\x*60}, fill=white] (1, 0) circle (0.1cm);
	\draw[rotate={\x*60}, fill=white] (2, 0) circle (0.1cm);
	\draw[rotate={\x*60}, fill=white] ({2 + sqrt(3) / 2}, 0.5) circle (0.1cm);
	\draw[rotate={\x*60}, fill=white] ({2 + sqrt(3) / 2}, -0.5) circle (0.1cm);
}
\end{tikzpicture}\\
\caption{The cactus graph obtained from a hexagon}
\label{figure: cactus graph}
\end{figure}
\end{example}

By virtue of Theorem \ref{positivity of matchings}, it is quite easy to recognize positive matchings and even construct them algorithmically. This enables us to write a simple algorithm generating all pmds of a graph and compute their minimum size, that is the pmd of a graph can be determined via a computer program. Since pendants can be removed form the graph when computing the pmd, Theorem \ref{pmd of antler graphs} turns to an improvement of our program, which is particularly efficient when working with sparse graphs (see \cite{mfdg} for the source codes in GAP).
\section{Using independent sets to compute pmds}\label{section 3}
Any graph $\Gamma$ can be partitioned into a family of independent sets, say $\mathcal{I}$, the minimum number of which is usually referred to as the \textit{chromatic number} of the graph. Since $\Gamma$ is a subgraph of the complete multipartite graph $K$ with $\mathcal{I}$ denoting the set of parts, one can observe that $\pmd(K)$ turns to an upper bound for $\pmd(\Gamma)$. In what follows, we first compute the pmd of complete graphs and complete bipartite graphs and apply them to give an upper bound for the pmd of arbitrary complete multipartite graphs. 

We know that for complete graphs $K_n$ ($n\geq2$), $\pmd(K_n)\leq2n-3$ (see Proposition \ref{general bounds for pmds}(i)). Here, we aim to show that the equality holds for all $n\geq2$. 
\begin{proposition}\label{pmd of K_n}
We have $\pmd(K_n)=2n-3$, for all $n\geq2$.
\end{proposition}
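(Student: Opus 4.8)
The upper bound $\pmd(K_n)\le 2n-3$ is already furnished by Proposition \ref{general bounds for pmds}(i), so the entire content of the statement is the matching lower bound. The plan is to prove it by induction on $n$, establishing the recursive estimate $\pmd(K_n)\ge \pmd(K_{n-1})+2$ for all $n\ge 3$. Since the base case $\pmd(K_2)=1=2\cdot 2-3$ is immediate from Example \ref{simple pmds}(i), the recursion then yields $\pmd(K_n)\ge 1+2(n-2)=2n-3$, which together with the upper bound gives equality.

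The combinatorial heart is an observation I would isolate as a lemma: if $R\subseteq E(K_n)$ is a matching, then every positive matching of $K_n-R$ consists of a single edge. To prove this, suppose $M$ is a positive matching of $K_n-R$ containing two edges $\{x,y\}$ and $\{z,w\}$. By Theorem \ref{positivity of matchings}(iii), the subgraph of $K_n-R$ induced by $\{x,y,z,w\}$ contains a pendant edge belonging to $M$, say $\{x,y\}$ with $x$ of degree one. The only edges absent from $K_n$ inside $\{x,y,z,w\}$ are those of $R$, and $\{x,y\}\in M$ is present, so both edges $\{x,z\}$ and $\{x,w\}$ must lie in $R$; but they share the vertex $x$, contradicting that $R$ is a matching. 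Exactly the same contradiction arises for each possible choice of pendant vertex, so $|M|\le 1$.

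With this lemma available, I would fix an optimal pmd $M_1,\dots,M_p$ of $K_n$ (so $p=\pmd(K_n)$) and set $R_i:=M_1\cup\cdots\cup M_{i-1}$. As long as $R_i$ is a matching, the lemma forces $M_i$ to be a single edge. Because $\bigcup_i M_i=E(K_n)$ is not a matching for $n\ge 3$, there is a first index $k$ for which $R_{k+1}=R_k\cup M_k$ fails to be a matching; for this $k$ the set $R_k$ is still a matching and hence $M_1,\dots,M_k$ are all single edges. The failure means that the unique edge of $M_k$, say $\{v,x\}$, is adjacent to an edge of $R_k$, so some earlier part $M_j=\{v,y\}$ with $j<k$ shares the vertex $v$ with $M_k$. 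Thus two singleton parts meeting at a common vertex $v$ are produced.

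Finally I would delete $v$: restricting the pmd to the induced subgraph $K_n-v=K_{n-1}$, i.e.\ discarding every edge through $v$, yields a pmd of $K_{n-1}$, since submatchings of positive matchings are again positive and, by Theorem \ref{positivity of matchings}(ii), positivity is inherited by subgraphs. Under this restriction the two parts $M_j$ and $M_k$ become empty, so the resulting pmd of $K_{n-1}$ uses at most $p-2$ parts; hence $\pmd(K_{n-1})\le p-2$, i.e.\ $p\ge \pmd(K_{n-1})+2$, completing the induction. The step I expect to demand the most care is the simultaneous bookkeeping at the end: verifying that the restriction is genuinely a valid pmd of $K_{n-1}$ and that the part count drops by exactly the two vanishing singletons, and confirming that the first breakdown of the matching property must indeed be caused by two singleton parts. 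The lemma above is precisely what makes both of these points go through cleanly.
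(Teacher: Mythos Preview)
Your argument is correct and follows essentially the same inductive route as the paper's: both establish $\pmd(K_n)\ge\pmd(K_{n-1})+2$ by showing that the early parts of an optimal pmd must be singletons and then deleting a vertex covered by two of them. Your explicit lemma is precisely what underlies the paper's assertion that the first non-singleton part occurs at index $t\ge 3$, and your ``first collision'' device is a minor variant of the paper's ``first non-singleton part'' for locating the vertex to remove.
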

\begin{proof}
We know from Proposition \ref{general bounds for pmds}(i) that $\pmd(K_n)\leq 2n-3$. Indeed, $M_1,\ldots,M_{2n-3}$ defined as 
\[M_i:=\{u_ru_s\colon\quad r+s=i+2\}\]
is a pmd of $K_n$ assuming that $V(\Gamma)=\{u_1,\ldots,u_n\}$. Now, we prove the lower bound $\pmd(K_n)\geq 2n-3$ by induction on $n$. Suppose $\pmd(K_{n-1})=2(n-1)-3$. Let $M_1,\ldots,M_p$ denote a pmd of $K_n$, and let $t$ be the first number satisfying $|M_t|>1$. It is evident that $t\geq3$. By Theorem \ref{positivity of matchings}(iii), there exist edges $uv,u'v'\in M_t$ such that $u'$ is not adjacent to $u,v$, that is $uu',vu'\in M_1\cup\cdots\cup M_{t-1}$, say $uu'\in M_i$ and $vu'\in M_j$ for some $1\leq i<j<t$. Reordering $M_1,\ldots,M_{t-1}$ if required, we may assume that $i=1$ and $j=2$. Now, since the subgraph induced by $V(K_n)\setminus\{u'\}$ is a complete graph, it follows that
\[\pmd(K_n)=2+\pmd(K_n-\{uu',vu'\})\geq2+\pmd(K_{n-1})=2n-3,\]
as required.
\end{proof}
\begin{corollary}
A graph $\Gamma$ with $n\geq2$ vertices is complete if and only if $\pmd(\Gamma)=2n-3$.
\end{corollary}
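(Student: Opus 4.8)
The plan is to prove the two implications separately, the forward one being immediate and the reverse one requiring a monotonicity argument together with the explicit decomposition from Proposition~\ref{pmd of K_n}. One direction is free: if $\Gamma$ is complete then $\Gamma = K_n$ and Proposition~\ref{pmd of K_n} gives $\pmd(\Gamma) = 2n-3$. For the converse I would argue the contrapositive, showing that a non-complete graph on $n$ vertices satisfies $\pmd(\Gamma) \le 2n-4$; since Proposition~\ref{general bounds for pmds}(i) already gives $\pmd(\Gamma) \le 2n-3$ for every such graph, this strict drop is exactly what is needed.

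The first ingredient I would establish is that $\pmd$ is monotone under spanning subgraphs: if $H$ is a subgraph of $G$ on the same vertex set, then $\pmd(H) \le \pmd(G)$. This follows cleanly from the pendant-edge criterion Theorem~\ref{positivity of matchings}(iii). Indeed, if $M$ is positive in $G$ and $M' := M \cap E(H)$, then for every $N \subseteq M'$ the graph $G[V(N)]$ has a pendant edge $e \in N$; since $H[V(N)]$ is obtained from $G[V(N)]$ by deleting edges while keeping $e$, the same endpoint still has degree one in $H[V(N)]$, so $e$ remains a pendant edge lying in $N$, and hence $M'$ is positive in $H$. Applying this stepwise to a pmd $M_1,\dots,M_p$ of $G$ shows that $M_1 \cap E(H),\dots,M_p \cap E(H)$, after discarding the empty parts, is a pmd of $H$, which yields the inequality.

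The second ingredient is a single missing edge. Since $\Gamma$ is not complete, fix a non-edge $\{a,b\}$ and choose the labeling $V=\{u_1,\dots,u_n\}$ with $u_1=a$ and $u_2=b$, so that $\Gamma$ is a spanning subgraph of $K_n - u_1u_2$. In the explicit decomposition $M_i = \{u_ru_s : r+s=i+2\}$ of Proposition~\ref{pmd of K_n}, the first part is the singleton $M_1 = \{u_1u_2\}$, since $r+s=3$ forces $\{r,s\}=\{1,2\}$. Deleting this edge empties $M_1$, and the reindexed sequence $M_2,\dots,M_{2n-3}$ is a pmd of $K_n - u_1u_2$ with only $2n-4$ parts: for each $i \ge 2$ the part $M_i$ is positive in $K_n - (M_1\cup\cdots\cup M_{i-1}) = (K_n - u_1u_2) - (M_2\cup\cdots\cup M_{i-1})$, exactly as required. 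Hence $\pmd(K_n - u_1u_2) \le 2n-4$, and by monotonicity $\pmd(\Gamma) \le \pmd(K_n - u_1u_2) \le 2n-4 < 2n-3$, completing the contrapositive.

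I expect the main point requiring care to be the monotonicity lemma, specifically verifying that the pendant edge furnished by Theorem~\ref{positivity of matchings}(iii) survives edge deletion and lies in the relevant subset, and that the sequential positivity condition is inherited at each stage of the decomposition. The remainder is bookkeeping: recognizing that $M_1$ is a singleton in the sum-ordering, that placing the missing edge at $\{u_1,u_2\}$ is always possible, and that the reindexing produces a genuine pmd.
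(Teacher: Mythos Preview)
Your argument is correct and is essentially the intended one; the paper states the corollary without proof, as an immediate consequence of Proposition~\ref{pmd of K_n} together with the explicit decomposition used there. One minor simplification: the monotonicity step is even quicker directly from the weight-function definition of a positive matching---if $w$ witnesses that $M$ is positive in $G$, then the same $w$ witnesses that $M\cap E(H)$ is positive in any spanning subgraph $H$, since every edge of $H\setminus M$ is already an edge of $G\setminus M$---so there is no need to route through Theorem~\ref{positivity of matchings}(iii).
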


In \cite[Theorem 8.6(4)]{ac-vw} the authors use pure algebraic techniques to show that the upper bound in Proposition \ref{general bounds for pmds}(ii) is attained for complete bipartite graphs. Here we give a simple graph theoretical proof of this result. To be self contained, we give the proof of the upper bound as well.
\begin{proposition}\label{pmd of K_m,n}
We have $\pmd(K_{m,n})=m+n-1$, for all $m,n\geq1$.
\end{proposition}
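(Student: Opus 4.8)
The plan is to establish the two inequalities separately, using Theorem~\ref{positivity of matchings} and the remark that in bipartite graphs ``alternating closed walk'' may be replaced by ``alternating cycle.'' For the upper bound $\pmd(K_{m,n}) \leq m+n-1$, I would exhibit an explicit pmd with $m+n-1$ parts. Writing the two sides as $A = \{a_1,\ldots,a_m\}$ and $B = \{b_1,\ldots,b_n\}$, I would group the edges by the ``anti-diagonal'' index, setting
\[
M_k := \{a_r b_s \colon\quad r+s = k+1\}, \qquad k = 1,\ldots,m+n-1.
\]
Each $M_k$ is a matching (the indices $r,s$ are determined up to the finitely many solutions of $r+s = k+1$, and no vertex repeats). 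To verify $M_k$ is positive in the graph remaining after removing $M_1,\ldots,M_{k-1}$, I would use Theorem~\ref{positivity of matchings}(iii): the subgraph induced by any subset of $M_k$ is a disjoint union of single edges, which trivially contains a pendant edge, so $M_k$ is positive. This mirrors the construction used for $K_n$ in Proposition~\ref{pmd of K_n}, and is the routine direction.

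For the lower bound $\pmd(K_{m,n}) \geq m+n-1$, I would argue by induction, most naturally on $m+n$, paralleling the induction in Proposition~\ref{pmd of K_n}. The base cases $m=1$ (a star, where $\pmd = \Delta = n$ by Corollary~\ref{pmd of trees}) are immediate. For the inductive step, given any pmd $M_1,\ldots,M_p$ of $K_{m,n}$, I would let $t$ be the first index with $|M_t| > 1$ and pick two edges $a_r b_s, a_{r'} b_{s'} \in M_t$ with $r \neq r'$, $s \neq s'$. By Theorem~\ref{positivity of matchings}(iii) applied to these two edges, the ``crossing'' edges $a_r b_{s'}$ and $a_{r'} b_s$ must already have been used in earlier parts $M_1 \cup \cdots \cup M_{t-1}$ (otherwise the four vertices would form an alternating $4$-cycle inside $\Gamma[M_t \cup \{\text{crossing edges}\}]$ with no pendant). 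The goal is then to delete one vertex—say $a_r$—so that the two earlier parts containing the edges at $a_r$ collapse, and the remaining graph is $K_{m-1,n}$, giving
\[
\pmd(K_{m,n}) \geq 2 + \pmd(K_{m-1,n}) - (\text{correction}),
\]
which I would want to reduce to $\geq 1 + \pmd(K_{m-1,n})$ so that the induction hypothesis $\pmd(K_{m-1,n}) = m+n-2$ closes the argument.

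The main obstacle is making the counting in the lower bound precise: in $K_n$ one deletes a single vertex $u'$ and exactly two prior parts ($M_1, M_2$ after reordering) supply its two edges, netting a gain of $2$ and a drop to $K_{n-1}$. In the bipartite case the bookkeeping is more delicate because deleting a vertex of the $m$-side removes $n$ edges distributed across many parts, so the clean ``$2$ parts for $1$ vertex'' reduction does not transfer directly. I expect the cleanest route is instead to show that removing a single suitably chosen \emph{vertex} of maximum valency reduces the pmd by exactly one—i.e.\ that some part must be ``consumed'' by the two crossing edges forced above—and then to identify the residual graph with $K_{m-1,n}$ so the induction hypothesis applies. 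Carefully choosing which vertex to delete, and verifying that the forced crossing edges genuinely cost a separate part rather than being absorbed, is where the real work lies; the rest is routine.
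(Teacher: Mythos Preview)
Your upper-bound construction matches the paper's, but your verification contains an error: the subgraph of $\Gamma-(M_1\cup\cdots\cup M_{k-1})$ induced by (the vertices of) a subset $N\subseteq M_k$ is \emph{not} a disjoint union of edges. For example, if $a_1b_k,a_2b_{k-1}\in N$ then the edge $a_2b_k$ (index sum $k+2>k+1$) is still present in the remaining graph and lies in the induced subgraph. The correct check is that the edge $a_rb_s\in N$ with $r$ minimal is pendant at $a_r$: any edge at $a_r$ in the remaining graph goes to some $b_{s'}$ with $r+s'\geq k+1$, i.e.\ $s'\geq s$, while $s$ is the largest $b$-index occurring in $N$.

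Your lower bound has a more serious gap. First, the assertion that \emph{both} crossing edges $a_rb_{s'}$ and $a_{r'}b_s$ must already lie in $M_1\cup\cdots\cup M_{t-1}$ is false: if only $a_rb_{s'}$ has been removed, then $a_{r'}$ is a pendant in the subgraph induced by the four vertices, and Theorem~\ref{positivity of matchings}(iii) is satisfied. Second, you yourself flag that the $K_n$-style ``delete one vertex, save two parts'' reduction does not transfer, and the proposal ends precisely where the real argument would have to begin. The paper avoids induction with a short direct count: if two $A$-vertices $a_i,a_j$ are both uncovered before part $M_k$, they cannot both appear in $M_k$ (their two edges together with the two still-present crossing edges form an alternating square), so each part introduces at most one new $A$-vertex and hence at least $m$ parts are needed before all of $A$ is covered. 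The $A$-vertex first appearing in part $M_s$ (with $s\geq m$) has only one of its $n$ edges used so far, so $n-1$ further parts are required, giving $p\geq s+(n-1)\geq m+n-1$.
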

\begin{proof}
Let $\{u_1,\ldots,u_m\}$ and $\{v_1,\ldots,v_n\}$ be the bipartition of $\Gamma:=K_{m,n}$. For $1\leq k<m+n$, put
\[M_k=\{u_iv_j \colon\quad i+j=k+1,\; 1\leq i\leq m,\; 1\leq j\leq n\}.\]
A simple verification shows that $M_1,\ldots,M_{m+n-1}$ is a pmd of $\Gamma$ so that $\pmd(\Gamma)\leq m+n-1$. To prove $\pmd(\Gamma)\geq m+n-1$, let $M_1,\ldots,M_p$ be a pmd of $\Gamma$ and $i,j\in\{1,\ldots,m\}$ be such that $u_i,u_j$ are not occurred in $M_1,\ldots,M_{k-1}$, for some $1\leq k\leq p$. Then at most one of $u_i$ and $u_j$ can occur in $M_k$, for otherwise we get an alternating square in $M_k$ contradicting the positivity of $M_k$ (Theorem \ref{positivity of matchings}). Hence, if $s$ is the least number for which all $u_1,\ldots,u_m$ occur in $M_1\cup\cdots\cup M_s$, then $s\geq m$. Let $u_i$ be a vertex occurred only once in $M_1\cup\cdots\cup M_s$. As $u_i$ is adjacent to $n-1$ vertices in $\Gamma-(M_1\cup\cdots\cup M_s)$, it follows that $\pmd(\Gamma)\geq m+n-1$. Therefore, $\pmd(\Gamma)=m+n-1$.
\end{proof}
\begin{remark}
For any non-negative integers $m<n$ there exists a graph $\Gamma_{m,n}$, say $K_{m+1,n}$, such that $\Delta(\Gamma_{m,n})=n$ and $\pmd(\Gamma_{m,n})-\Delta(\Gamma_{m,n})=m$.
\end{remark}

The above remark suggests us to think about the case $m\geq n$. However, all our computations, insights, and examples in this paper leads us to the following conjecture:
\begin{conjecture}
Any graph $\Gamma$ satisfies $\pmd(\Gamma)\leq2\Delta(\Gamma)-1$.
\end{conjecture}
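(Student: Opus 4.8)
The plan is to work throughout with the alternating-walk criterion of Theorem~\ref{positivity of matchings}, since it turns ``positivity of a part in the remaining graph'' into the purely combinatorial condition of forbidding alternating closed walks, and to attack the bound from two complementary directions that already give equality in the extremal cases $K_{\Delta+1}$ and $K_{\Delta,\Delta}$, where $\pmd=2\Delta-1$ by Propositions~\ref{pmd of K_n} and~\ref{pmd of K_m,n}. First I would record a clean sufficient mechanism. Given any injective labeling $w\colon V\to\mathbb{R}$, order the edges by the value $w(u)+w(v)$ and let the part $M_s$ consist of all edges whose endpoint-labels sum to the $s$-th such value. Each $M_s$ is a matching because $w$ is injective, and choosing the reflected labeling $x\mapsto -w(x)$, shifted just past the current threshold, certifies through Theorem~\ref{positivity of matchings} that $M_s$ is positive in the graph of not-yet-removed edges. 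Hence
\[
\pmd(\Gamma)\ \le\ \bigl|\{\,w(u)+w(v)\colon uv\in E\,\}\bigr|,
\]
and minimizing the right-hand side over injective $w$ already yields $2n-3$ for $K_n$ and $2\Delta-1$ for $K_{\Delta,\Delta}$, i.e.\ the conjecture with equality on the extremal graphs.

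This reformulation also explains \emph{why} $2\Delta-1$ is the natural ceiling. If $v$ and $v'$ carry the largest and smallest labels and both have degree $\Delta$, then the $\Delta$ sums at $v$ are all at least $w(v)+w(v')$ while the $\Delta$ sums at $v'$ are all at most $w(v)+w(v')$, so the two families meet in at most the single value $w(v)+w(v')$ and already force $2\Delta-1$ distinct sums. Thus on regular graphs the single-label method can never beat $2\Delta-1$, and to prove the conjecture this way one would have to realize equality by a labeling in which every edge reuses the $2\Delta-1$ sums pinned down by the two extreme vertices. Since positivity is hereditary under passing to subgraphs and to subsets of a matching --- which follows at once from condition~(iii) of Theorem~\ref{positivity of matchings}, as deleting edges only lowers degrees and so preserves pendant $M$-edges --- the invariant $\pmd$ is monotone under taking subgraphs. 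Embedding $\Gamma$ into a $\Delta$-regular graph therefore reduces the conjecture to the regular case, and I would then try to induct on $\Delta$, peeling one positive matching, using the evident bound $\pmd(\Gamma)\le 1+\pmd(\Gamma-M)$, with the base cases $\Delta\le 2$ supplied by Example~\ref{simple pmds}.

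The subtlety is that the two regimes demand opposite behavior: for sparse graphs one wants to saturate all maximum-degree vertices so as to drop $\Delta$, whereas for dense graphs this is impossible and one must instead mimic the staircase of $K_n$, removing very small (even single-edge) positive matchings many times. The hard part, and the reason the statement is only conjectural, is exactly this clash together with the genuinely non-bipartite nature of positivity. The naive induction ``remove two positive matchings to lower $\Delta$ by one'' is false: in $K_5$ every positive matching of the almost-complete remaining graph is a single edge, so two parts cover only four of the five vertices and $\Delta$ does not drop --- there the correct count $2\Delta-1=7$ comes from the staircase, not from degree reduction. Conversely, the single-label construction should fail for expander-like regular graphs, where the absence of arithmetic structure pushes the number of distinct edge-sums strictly above $2\Delta-1$, so the full sequential power of a pmd (a fresh weight function at each level) is genuinely needed.

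Any proof must therefore interpolate between a ``tree part'' --- the spanning-tree contribution of size $\Delta$, cf.\ Corollary~\ref{pmd of trees} --- and a ``cycle part'', the extra at most $\Delta-1$ levels needed to destroy the alternating closed walks created by the remaining edges, all while certifying through Theorem~\ref{positivity of matchings} that no alternating closed walk survives in any level against the higher levels. Controlling these closed walks --- not merely alternating cycles, as one may reduce to in the bipartite case by the Remark following Theorem~\ref{positivity of matchings} --- is where I expect the essential difficulty to lie, and it is what separates the tractable bipartite and complete cases from the general conjecture.
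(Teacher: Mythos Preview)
The statement is a \emph{conjecture} in the paper; the authors offer no proof. Your submission is likewise not a proof but an exploratory discussion of approaches and obstructions, and you say so yourself (``the reason the statement is only conjectural''). So there is no proof-to-proof comparison to make: both the paper and you leave the inequality $\pmd(\Gamma)\le 2\Delta(\Gamma)-1$ open.

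That said, the concrete observations in your write-up are correct. The sum-labeling construction does give $\pmd(\Gamma)\le\bigl|\{\,w(u)+w(v):uv\in E\,\}\bigr|$ for any injective $w$, via exactly the weight shift you describe, and this recovers the sharp values $2n-3$ and $2\Delta-1$ on $K_n$ and $K_{\Delta,\Delta}$ (this is essentially the mechanism behind Propositions~\ref{pmd of K_n} and~\ref{pmd of K_m,n}). Your argument that any injective labeling of a $\Delta$-regular graph forces at least $2\Delta-1$ distinct edge-sums, by looking at the neighborhoods of the maximum- and minimum-labeled vertices, is valid and shows the single-labeling method alone cannot beat the conjectured bound. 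Subgraph monotonicity of $\pmd$ follows from Theorem~\ref{positivity of matchings}(ii) as you indicate, and the reduction to the $\Delta$-regular case via the standard regular supergraph construction is sound. Finally, your diagnosis of the obstruction --- that the dense regime ($K_{\Delta+1}$) demands many single-edge parts in a staircase pattern while the sparse regime demands saturating maximum-degree vertices to drop $\Delta$, and no uniform peeling scheme does both --- correctly identifies where any attempted induction breaks. None of this, however, closes the gap, and the conjecture stands.
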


In what follows, we give upper and lower bounds for the pmd of complete multipartite graphs. 
\begin{theorem}\label{pmd of complete multipartite graphs}
Let $\Gamma=K_{n_1,\ldots,n_m}$ ($n_1\leq\cdots\leq n_m$) be a complete multipartite graph. Then
\[\max\left\{\frac{3}{2}|\Gamma|-n_m-1, |\Gamma|+\frac{m}{2}-2\right\}\leq\pmd(\Gamma)\leq2|\Gamma|-n_{m-1}-n_m-1.\]
\end{theorem}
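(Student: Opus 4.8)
The plan is to establish the two lower bounds and the single upper bound separately, in each case building on the exact values $\pmd(K_n)=2n-3$ and $\pmd(K_{m,n})=m+n-1$ together with the pendant/alternating-walk criterion of Theorem \ref{positivity of matchings}. Write $V_1,\ldots,V_m$ for the parts with $|V_i|=n_i$ and $|\Gamma|=n_1+\cdots+n_m$.

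For the upper bound $\pmd(\Gamma)\le 2|\Gamma|-n_{m-1}-n_m-1$, I would exploit the fact that $\Gamma$ is a subgraph of the complete graph on $[\,|\Gamma|\,]$, but more efficiently collapse the two largest parts. Observe that $K_{n_1,\ldots,n_m}$ is obtained from the complete graph on the vertices of $V_1\cup\cdots\cup V_{m-2}$, fully joined to everything, by leaving $V_{m-1}$ and $V_m$ internally edgeless. Concretely, the graph $\Gamma$ is a subgraph of the join of $K_{|\Gamma|-n_{m-1}-n_m}$ with the complete bipartite graph $K_{n_{m-1},n_m}$. I would produce an explicit pmd by first exhausting all edges incident to the $|\Gamma|-n_{m-1}-n_m$ "clique vertices'' using a diagonal labeling analogous to the one in Proposition \ref{pmd of K_n} (costing roughly $2(|\Gamma|-n_{m-1}-n_m)$ parts once the clique-to-rest edges are included), then finishing the remaining $K_{n_{m-1},n_m}$ with its $n_{m-1}+n_m-1$ parts via Proposition \ref{pmd of K_m,n}. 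Careful bookkeeping of how the diagonal matchings simultaneously cover clique edges and clique-to-part edges is what yields the stated count; the arithmetic must be arranged so the totals telescope to $2|\Gamma|-n_{m-1}-n_m-1$. The main obstacle here is the overlap accounting: ensuring that after removing the first block of matchings the residual graph is exactly $K_{n_{m-1},n_m}$ (no stray edges), so that the two bounds add cleanly rather than with an off-by-one slack.

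For the lower bound $\pmd(\Gamma)\ge |\Gamma|+\tfrac{m}{2}-2$, I would adapt the argument of Proposition \ref{pmd of K_m,n}. The key structural fact is that within a single positive matching $M_k$, no two vertices lying in distinct parts but both still unmatched at stage $k$ can both be newly matched if doing so creates an alternating $4$-cycle; more to the point, I would track for each part $V_i$ the stage at which its vertices first become saturated. Summing a "first-saturation'' counting inequality over the parts—each part forcing at least $n_i-1$ distinct later stages to clear its incident edges, while the $m$ parts collectively contribute an additive $\tfrac{m}{2}$ term from the bipartite alternating-square obstruction between pairs of parts—should give the bound. I expect to phrase this via a potential/counting function: let $s_i$ be the last stage at which $V_i$ gains a matched vertex, and bound $p$ below by a weighted sum of the $s_i$.

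For the lower bound $\pmd(\Gamma)\ge \tfrac{3}{2}|\Gamma|-n_m-1$, I would isolate the subgraph $\Gamma-V_m=K_{n_1,\ldots,n_{m-1}}$, which is complete multipartite on $|\Gamma|-n_m$ vertices, and combine a counting bound on edges cleared per matching with the fact that each matching removes at most $\lfloor |\Gamma|/2\rfloor$ edges while the total edge count is large. Since every vertex outside $V_m$ has valency at least $|\Gamma|-n_m$, and the alternating-square constraint of Theorem \ref{positivity of matchings} limits how many mutually "independent'' high-valency vertices a single part can feed into one matching, a degree-sum argument analogous to the $\pmd(\Gamma)\ge\Delta(\Gamma)$ bound, sharpened by the factor $\tfrac32$ coming from the complete-graph behavior on $V_1\cup\cdots\cup V_{m-1}$, should deliver the coefficient $\tfrac32$. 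I anticipate the hardest part of the whole theorem to be this $\tfrac{3}{2}$-factor lower bound: unlike the upper bound, it cannot be read off from the two exact cases directly, and it requires quantifying precisely how the near-complete structure on the first $m-1$ parts forces the $\pmd(K_n)$-style doubling while the single large independent set $V_m$ only partially relaxes it. I would attempt an induction on $m$ (or on $|\Gamma|$), peeling off one vertex of $V_m$ or one full small part and checking that the bound degrades by exactly the right amount, falling back on the explicit diagonal pmd to certify tightness against the upper bound in the balanced cases.
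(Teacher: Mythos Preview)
Your upper-bound strategy can be made to work, but it is more laborious than the paper's. The paper does not split into ``clique phase then bipartite phase''; instead it orders the vertices as $C_m,C_1,\ldots,C_{m-1}$ and simply \emph{restricts} the diagonal pmd $M_i=\{u_ru_s:r+s=i+2\}$ of $K_{|\Gamma|}$ to $\Gamma$. With this ordering the first $n_m-1$ matchings lie entirely inside $C_m$ and the last $n_{m-1}-1$ lie inside $C_{m-1}$; both are empty, and deleting them leaves $2|\Gamma|-3-(n_m-1)-(n_{m-1}-1)$ parts. No overlap accounting or two-block telescoping is needed.

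For the lower bounds there is a genuine gap: you are missing the single structural lemma that drives both of them. In a complete multipartite graph, if $X_i$ denotes the set of vertices first covered at stage $i$ (covered by $M_i$ but not by $M_1\cup\cdots\cup M_{i-1}$), then $|X_i|\le 2$, and moreover any two vertices of $X_i$ must be adjacent (hence lie in different parts). The proof is immediate from Theorem~\ref{positivity of matchings}: two non-adjacent new vertices would force an alternating 4-cycle, and three mutually adjacent new vertices would leave the induced subgraph on their matching edges without a pendant. From $|X_i|\le 2$ one gets $k\ge |\Gamma|/2$ for the last stage $k$ with $X_k\neq\emptyset$; since any $u\in X_k$ still has full valency $\ge |\Gamma|-n_m$, one obtains $p\ge (k-1)+(|\Gamma|-n_m)\ge \tfrac{3}{2}|\Gamma|-n_m-1$ directly. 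The second bound runs the same argument with \emph{parts} in place of vertices (at most two parts are first touched per stage, and within a part at most one new vertex per stage), giving $p\ge |\Gamma|+\tfrac{m}{2}-2$.

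Your proposed routes---edge counting ($p\ge 2|E|/|\Gamma|$), a degree-sum sharpening of $p\ge\Delta$, or induction by peeling off a vertex of $V_m$---will not recover the coefficient $\tfrac{3}{2}$; they are too coarse to see the ``two new vertices per stage'' bottleneck. In particular, contrary to your expectation, the $\tfrac{3}{2}|\Gamma|-n_m-1$ bound is the \emph{easier} of the two once the lemma on $X_i$ is in hand; it is a two-line consequence, not the hardest part of the theorem.
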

\begin{proof}
Let $C_1,\ldots,C_m$ be the corresponding parts of $\Gamma$ with $|C_i|=n_i$, for $i=1,\ldots,m$, and $M_1,\ldots,M_p$ be a pmd of $\Gamma$. 

First we show that $\pmd(\Gamma)\geq \frac{3}{2}|\Gamma|-n_m-1$. Let $X_i$ be the set of vertices of $\Gamma$ covered by $M_i$ but not by $M_1\cup\cdots\cup M_{i-1}$. If $u_1,u_2\in X_i$ are such that $u_1u_2\notin E(\Gamma)$, then there must exist distinct vertices $v_1,v_2\in V(\Gamma)$ different from $u_1,u_2$ such that $u_1v_1,u_2v_2\in M_i$. Then $u_1,v_1,u_2,v_2,u_1$ is an alternating cycle with respect to $M_i$ contradicting the positivity of the matching $M_i$. Thus $u_1$ and $u_2$ are adjacent in $\Gamma$. If $X_i$ contains three vertices $u_1,u_2,u_3$, then $u_1,u_2,u_3$ are vertices of a triangle in $\Gamma$. Let $v_1,v_2,v_3$ be such that $u_1v_1,u_2v_2,u_3v_3\in M_i$. Then either $\{u_1,u_2,u_3\}\cap\{v_1,v_2,v_3\}=\emptyset$ or $\{u_1,u_2,u_3\}\cap\{v_1,v_2,v_3\}\neq\emptyset$. In both cases, the subgraph of $\Gamma$ induced by $\{u_1,u_2,u_3,v_1,v_2,v_3\}$ has no pendants, which contradicts Theorem \ref{positivity of matchings}(iii). This contradiction shows that $|X_i|\leq2$. Let $k$ be such that $X_k\neq\emptyset$ but $X_{k+1}=\cdots=X_p=\emptyset$. Since $|X_1|,\ldots,|X_k|\leq2$, we must have $k\geq|\Gamma|/2$. On the other hand, as the vertices in $X_k$ do not appear in $M_1\cup\cdots\cup M_{k-1}$, $\deg_{\Gamma-M_1\cup\cdots\cup M_{k-1}}(u)=\deg_\Gamma(u)$, for all $u\in X_k$. It follows that \[p-(k-1)\geq\deg_\Gamma(u)\geq\delta(\Gamma)=n_1+\cdots+n_{m-1}\]
for any $u\in X_k$, from which the result follows.

Next we show that $\pmd(\Gamma)\geq|\Gamma|+\frac{m}{2}-2$. Similar to the previous case, let $Y_i$ be the set of parts of $\Gamma$ such that for any part $C$ in $Y_i$, $C$ has a vertex covered by $M_i$ but no vertices of $C$ is covered by $M_1\cup\cdots\cup M_{i-1}$. Let $k$ be such that $Y_k\neq\emptyset$ but $Y_{k+1}=\cdots=Y_p=\emptyset$. Clearly, $k\geq m/2$ as $|Y_1|,\ldots,|Y_p|\leq2$. Let $C\in Y_k$ be any part. An argument analogous to the previous paragraph shows that there must exists $k'\geq k-1+|C|$ such that $C$ is covered by $M_1\cup\cdots\cup M_{k'}$ but not by $M_1\cup\cdots\cup M_{k'-1}$. Let $u\in C$ be a vertex that is not covered by $M_1\cup\cdots\cup M_{k'-1}$. As $\deg_{\Gamma-M_1\cup\cdots\cup M_{k'-1}}(u)=\deg_\Gamma(u)$, we must have $p-(k'-1)\geq\deg_\Gamma(u)=|\Gamma|-|C|$, from which the result follows.

Finally, we prove the upper bound. Suppose the vertices of $\Gamma$ are labeled with elements of $C_m,C_1,\ldots,C_{m-1}$, respectively. Let $M_1,\ldots,M_p$ be the positive matching decomposition of $\Gamma$ obtained from the restriction of the pmd of $K_{|\Gamma|}$, the complete graph on $V(\Gamma)$, to $\Gamma$ as in the proof of Proposition \ref{pmd of K_n} where $p=2|\Gamma|-3$. Then the first $n_m-1$ and the last $n_{m-1}-1$ given positive matchings are contained in the subgraphs induced by $C_m$ and $C_{m-1}$, respectively. Since $C_m$ and $C_{m-1}$ are independent sets, by removing these $(n_m-1)+(n_{m-1}-1)$ positive matchings, we obtain a positive matching decomposition for $\Gamma$, hence $\pmd(\Gamma)\leq2|\Gamma|-n_{m-1}-n_m-1$. The proof is complete.
\end{proof}
\begin{corollary}
For all $m,n$, we have
\[\pmd(K_{1,m,n})=m+n+1\]
and
\[\pmd(K_{2,m,n})=m+n+2+\varepsilon,\]
where $\varepsilon\in\{0,1\}$.
\end{corollary}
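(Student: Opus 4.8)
The plan is to read both equalities off Theorem~\ref{pmd of complete multipartite graphs} applied to tripartite graphs, using the integrality of $\pmd$ to collapse the interval between the stated upper and lower bounds. A point worth fixing at the outset is notational: in that theorem $m$ is the \emph{number} of parts, whereas in the present corollary $m$ is a part \emph{size}; since both graphs here have exactly three parts, the quantity $\tfrac{m}{2}$ appearing in the second lower bound should be read as $\tfrac{3}{2}$. Because $K_{1,m,n}=K_{1,n,m}$ and $K_{2,m,n}=K_{2,n,m}$, and the asserted values are symmetric in $m$ and $n$, I would assume $m\le n$ throughout, which also renders the first lower-bound term $\tfrac{3}{2}|\Gamma|-n_m-1$ redundant in each case.

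For $\Gamma=K_{1,m,n}$ with $1\le m\le n$ I would order the part sizes as $1\le m\le n$, so that $|\Gamma|=1+m+n$; the two largest parts then have sizes $m$ and $n$ (this holds even when $m=1$, as the second-largest size is $\max\{1,m\}=m$). The upper bound of the theorem becomes $2|\Gamma|-m-n-1=m+n+1$, while its second lower bound is $|\Gamma|+\tfrac{3}{2}-2=m+n+\tfrac{1}{2}$. Since $\pmd(\Gamma)$ is an integer, the latter forces $\pmd(\Gamma)\ge m+n+1$, so the two bounds meet and $\pmd(K_{1,m,n})=m+n+1$.

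For $\Gamma=K_{2,m,n}$ with $1\le m\le n$ I would argue in the same way, now with $|\Gamma|=2+m+n$. When $m\ge2$ the two largest parts are $m$ and $n$, so the upper bound is $2|\Gamma|-m-n-1=m+n+3$, and the second lower bound $|\Gamma|+\tfrac{3}{2}-2=m+n+\tfrac{3}{2}$ yields $\pmd(\Gamma)\ge m+n+2$ by integrality; hence $\pmd(\Gamma)\in\{m+n+2,\,m+n+3\}$, which is exactly $m+n+2+\varepsilon$ with $\varepsilon\in\{0,1\}$.

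The one place demanding care --- and the main, if modest, obstacle --- is the identification of the two largest parts in the degenerate cases of $K_{2,m,n}$, where the distinguished part of size $2$ is no longer the smallest. When $m=1$ (for instance $K_{2,1,n}=K_{1,2,n}$ with $n\ge2$, and in the extreme $K_{2,1,1}=K_{1,1,2}$) the two largest parts have sizes $2$ and $\max\{2,n\}$, so the upper bound tightens to $2|\Gamma|-2-n-1=n+3=m+n+2$; combined with the same integrality lower bound this pins $\pmd(\Gamma)=m+n+2$, i.e. $\varepsilon=0$, still within the claimed range. I would verify these few boundary instances explicitly, but they introduce no new idea beyond direct substitution into the theorem and rounding of the lower bound.
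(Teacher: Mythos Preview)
Your argument is correct and is precisely the intended one: the paper offers no separate proof, treating the corollary as an immediate consequence of Theorem~\ref{pmd of complete multipartite graphs}, and your direct substitution of the tripartite data into that theorem together with the integrality of $\pmd$ is exactly how the bounds collapse. Your care with the notational clash (the symbol $m$ meaning the number of parts in the theorem versus a part size here) and with the degenerate orderings when a part of size $1$ or $2$ is not the smallest is appropriate and does not go beyond what the paper implicitly assumes.
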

\begin{remark}
A computation with computer reveals that $\pmd(K_{2,2,2})=6$ and $\pmd(K_{2,2,3})=7$. This leads us to the conjecture that $\pmd(K_{2,m,n})=m+n+2$, for all $m,n$. However, the lower bound in the above theorem shows, $\pmd(K_{a,b,c})=a+b+c$ does not hold in general.
\end{remark}

In the rest of this section, we shall study bipartite graphs in more details and give an explicit and algorithmic upper bound for the pmd of bipartite graphs.
\subsection{Regular bipartite graphs}
In the following, we aim to find an upper bound for the pmd of regular bipartite graphs. To this end, we apply the following fundamental theorem of Jacob and Meyniel on digraphs taking maximum dichromatic numbers. Recall that the \textit{dichromatic number} $\overset{\rightarrow}{\chi}(\Gamma)$ of a digraph $\Gamma$ is the minimum number of colors required to assign to vertices of $\Gamma$ such that every color class induces an acyclic disubgraph.
\begin{theorem}[Jacob and Meyniel \cite{hj-hm}]\label{dichromatic number}
Let $\Gamma$ be a connected digraph of maximum valency $2r$. Then $\overset{\rightarrow}{\chi}(\Gamma)\leq r$ except in the following cases:
\begin{itemize}
\item[\rm (i)]$r=1$ and $\Gamma$ is a directed circuit;
\item[\rm (ii)]$r=2$ and $\Gamma$ is an odd cycle (with two sided edges); and
\item[\rm (iii)]$\Gamma$ is the complete graph of order $r+1$ (with two sided edges).
\end{itemize}
\end{theorem}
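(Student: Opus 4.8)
The statement is the directed analogue of Brooks' theorem, with the dichromatic number $\overset{\rightarrow}{\chi}$ in place of the chromatic number and ``acyclic induced subdigraph'' in place of ``independent set''; the plan is to adapt Lov\'asz's proof of Brooks' theorem. Throughout I read the valency of a vertex $v$ as $d(v)=d^+(v)+d^-(v)$, a two-sided edge contributing $2$, so the hypothesis $d(v)\le 2r$ gives $\min(d^+(v),d^-(v))\le r$ at every vertex. The foundation is an insertion lemma: if a set $C$ induces an acyclic subdigraph and $v\notin C$ has no out-neighbour (or no in-neighbour) in $C$, then $C\cup\{v\}$ is again acyclic, because a directed cycle through $v$ needs both an arc leaving and an arc entering $v$ inside $C$. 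Building an elimination order $v_1,\dots,v_n$ by repeatedly deleting a vertex attaining $\min(d^+,d^-)\le r$ in the current subdigraph, and then colouring in reverse order, the at most $r$ constraining neighbours of each $v_i$ among the already-coloured vertices meet at most $r$ of $r+1$ classes, so the lemma leaves a safe class; this yields the crude bound $\overset{\rightarrow}{\chi}(\Gamma)\le r+1$. The entire content of the theorem is to remove one colour outside the exceptions.

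I would then split on regularity. Note that total valency $\le 2r$ forces a clean dichotomy: either every vertex has $d^+(v)=d^-(v)=r$ (call $\Gamma$ \emph{diregular}), or some vertex $v_0$ has $\min(d^+(v_0),d^-(v_0))\le r-1$, since if $d^+(v_0)>r$ then $d^-(v_0)<r$, and symmetrically. In the latter, non-regular case I would root a breadth-first order at $v_0$, arrange the elimination order so that $v_0$ is coloured last and every other vertex keeps at least one constraining neighbour after it (hence at most $r-1$ before it), and colour in reverse with only $r$ colours; each non-root vertex then meets at most $r-1$ classes and $v_0$ has slack by choice, so $\overset{\rightarrow}{\chi}(\Gamma)\le r$.

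The main obstacle is the diregular case, where the elimination argument alone gives only $r+1$ and where the three exceptions are born. Following Lov\'asz, I would attempt to find two vertices $x,y$ that can share a single acyclic class together with part of $\Gamma$ and whose deletion leaves $\Gamma$ connected, co-colour them, and order the rest so that a common neighbour inherits a free colour among $r$. The delicate point, sharper than in the undirected setting where two non-adjacent vertices always suffice, is that ``co-colourable'' is a global condition about directed cycles rather than a local non-adjacency condition, so the connectivity bookkeeping after deleting $x,y$ must track directed cycles and not merely edges. One shows such a pair is available unless $\Gamma$ is a directed circuit ($r=1$), a bidirected odd cycle ($r=2$), or the complete graph $K_{r+1}$ with every edge doubled, and then computes $\overset{\rightarrow}{\chi}$ by hand in these cases: the directed circuit is the unique non-acyclic set inside itself and needs $2=r+1$ colours, while in each bidirected exception every acyclic set is an independent set of the underlying odd cycle, respectively $K_{r+1}$, forcing $\overset{\rightarrow}{\chi}=3=r+1$, respectively $r+1$. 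This matches the three listed exceptions and completes the argument.
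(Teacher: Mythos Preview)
The paper does not prove this theorem at all: it is quoted from Jacob and Meyniel \cite{hj-hm} and used as a black box in the proof of Theorem~\ref{pmd of bipartite graphs with perfect matching}. So there is nothing to compare your argument against within the paper itself.

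On the merits of your sketch: the insertion lemma and the greedy bound $\overset{\rightarrow}{\chi}(\Gamma)\le r+1$ are correct, and your treatment of the non-diregular case is sound. Indeed, if some later neighbour of $v$ (in the underlying graph) is withheld, then the already-coloured out- and in-degrees of $v$ sum to at most $2r-1$, so their minimum is at most $r-1$, and by the insertion lemma at most $r-1$ classes are forbidden for $v$; the root $v_0$ has $\min(d^+(v_0),d^-(v_0))\le r-1$ by hypothesis.

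The diregular case, however, is where the theorem lives, and here your write-up is a plan rather than a proof. You assert that a suitable co-colourable pair $x,y$ with the right connectivity exists outside the three exceptions, but you do not prove it. There is also a concrete subtlety you do not address: for the last vertex $z$ to have at most $r-1$ forbidden classes after $x$ and $y$ receive the same colour, it is not enough that $x$ and $y$ be neighbours of $z$; you need them to be two out-neighbours (or two in-neighbours) of $z$. If $x$ is an out-neighbour and $y$ an in-neighbour, then the out-neighbours of $z$ can still occupy $r$ distinct colour classes and likewise the in-neighbours, and nothing prevents all $r$ classes from being forbidden for $z$. So the existence claim you need is stronger than the undirected ``two non-adjacent neighbours'' of Lov\'asz, and establishing it (and handling the residual connectivity of $\Gamma-\{x,y\}$) is exactly the work that distinguishes the exceptional digraphs. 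As written, that work is missing.
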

\begin{theorem}\label{pmd of bipartite graphs with perfect matching}
Let $\Gamma$ be a bipartite graph with a perfect matching $M$. If $\Delta(\Gamma)\geq3$, then
\[\pmd(\Gamma)\leq\pmd(\Gamma - M)+\Delta(\Gamma)-1.\]
\end{theorem}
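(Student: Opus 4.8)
The plan is to reduce the peeling-off of the perfect matching $M$ to a dicoloring problem on an auxiliary digraph and then invoke the Jacob--Meyniel theorem (Theorem~\ref{dichromatic number}). Fix the bipartition $V(\Gamma)=A\sqcup B$. Since $M$ is perfect, every edge of $M$ has exactly one endpoint on each side, say $e=u_ev_e$ with $u_e\in A$ and $v_e\in B$. I would build a digraph $D$ with vertex set $M$ as follows: for each edge $ab\in E(\Gamma)\setminus M$ with $a\in A$, $b\in B$, let $e$ be the matching edge at $a$ and $f$ the matching edge at $b$, and insert the arc $e\to f$.

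First I would establish the key dictionary: a subset $N\subseteq M$ is a positive matching of $\Gamma$ if and only if the subdigraph of $D$ induced on $N$ is acyclic. The point is that within $V(N)$ the only edges of $\Gamma$ are the edges of $N$ themselves (no other edge of $M$ can appear, since $M$ is a matching) together with the non-matching edges recorded by the arcs of $D$. Because $\Gamma[N]$ is bipartite, the Remark following Theorem~\ref{positivity of matchings} lets me replace ``alternating closed walk'' by ``alternating cycle'', and tracing such a cycle while using that every edge of $M$ runs from $A$ to $B$ shows these alternating cycles are precisely the directed cycles of $D$ lying inside $N$. Since deleting the other parts of $M$ leaves $\Gamma[N]$ unchanged, the order in which the parts of $M$ are peeled off is irrelevant, so the minimum number of positive matchings covering $M$ equals the dichromatic number $\overset{\rightarrow}{\chi}(D)$. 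Peeling off the color classes first and then appending a minimum pmd of $\Gamma-M$ yields $\pmd(\Gamma)\le\overset{\rightarrow}{\chi}(D)+\pmd(\Gamma-M)$.

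Next I would bound the maximum valency of $D$. A vertex $e=u_ev_e$ has out-degree $\deg_\Gamma(u_e)-1$ and in-degree $\deg_\Gamma(v_e)-1$, hence total valency $\deg_\Gamma(u_e)+\deg_\Gamma(v_e)-2\le 2\Delta(\Gamma)-2$. Thus $D$ has maximum valency at most $2(\Delta(\Gamma)-1)$, and Theorem~\ref{dichromatic number} applied with $r=\Delta(\Gamma)-1$ gives $\overset{\rightarrow}{\chi}(D)\le\Delta(\Gamma)-1$, which is exactly the claimed inequality. Note that the hypothesis $\Delta(\Gamma)\ge3$ forces $r\ge2$, ruling out exceptional case (i) (which requires $r=1$).

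The main obstacle will be the two remaining exceptions of Theorem~\ref{dichromatic number}. I expect to argue that a connected component of $D$ of exceptional type must saturate every degree inequality, i.e.\ force $\deg_\Gamma(u_e)=\deg_\Gamma(v_e)=\Delta(\Gamma)$ throughout; since $\Gamma$ is assumed connected, this pins $\Gamma$ down completely. Case (iii) forces $\Gamma=K_{\Delta,\Delta}$, where the bound follows from $\pmd(K_{\Delta,\Delta})=2\Delta-1$ (Proposition~\ref{pmd of K_m,n}) together with the lower bound $\pmd(K_{\Delta,\Delta}-M)\ge\Delta$; case (ii), with $\Delta(\Gamma)=3$, forces $\Gamma$ to be a specific cubic bipartite graph whose $\Gamma-M$ is a single even cycle, so that the inequality can be verified directly. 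These exceptional graphs form a structurally transparent family, and checking them by hand completes the proof.
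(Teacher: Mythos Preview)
Your approach is essentially the paper's: build the auxiliary digraph on the edges of $M$, identify positive sub-matchings of $M$ with acyclic induced subdigraphs, and apply Jacob--Meyniel with $r=\Delta(\Gamma)-1$; the handling of exceptions (i) and (iii) also matches. The one divergence is exception~(ii): you propose to verify the bound ``by hand'' for the resulting infinite family of cubic bipartite graphs (one for each odd $m\ge 3$, with $u_i\sim v_{i-1},v_i,v_{i+1}$), which is feasible but left as a sketch. The paper's move here is cleaner and avoids any infinite check: it replaces $M$ by the shifted perfect matching $M'=\{u_iv_{i+1}:i\}$, for which the new digraph is no longer a two-sided odd cycle (the case $m=3$ is $K_{3,3}$ and already falls under~(iii)); since $\Gamma-M'\cong\Gamma-M\cong C_{2m}$, the bound obtained for $M'$ transfers back to the original $M$.
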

\begin{proof}
Let $\Delta:=\Delta(\Gamma)$. If $\Gamma$ is a complete bipartite graph, then $\Gamma\cong K_{\Delta,\Delta}$, and hence 
\[\pmd(\Gamma)=2\Delta-1\leq\pmd(\Gamma- M)+\Delta(\Gamma)-1\]
by Proposition \ref{pmd of K_m,n} (note that $\pmd(\Gamma-M)\geq\Delta$ for $\Delta(\Gamma-M)=\Delta-1$ and $\Gamma-M$ has no positive perfect matching). Hence, we can assume that $\Gamma$ is not a complete bipartite graph. Let $U=\{u_1,\ldots,u_n\}$ and $V=\{v_1,\ldots,v_n\}$ denote a bipartition of $\Gamma$, and assume that $M=\{u_1v_1,\ldots,u_nv_n\}$ is a perfect matching of $\Gamma$. Let $\Gamma'$ be the digraph with vertex set $\{w_1,\ldots,w_n\}$ and arcs $(w_i,w_j)$ if $\{u_i,v_j\}$ is an edge in $\Gamma$, for all $1\leq i\neq j\leq n$. It is evident that $\Gamma'$ is a digraph with maximum valency $2(\Delta-1)$. By Theorem \ref{positivity of matchings}, there is a bijection between alternating closed walks of $\Gamma$ with respect to the matching $M$ and directed closed walks of $\Gamma'$ (compare to \cite[Lemma 5.2(3)]{ac-vw}). If $\Gamma'$ is an odd cycle, say $w_1,\ldots,w_n$, then replacing the matching $M$ with $\{u_1v_2,\ldots,u_{n-1}v_n,u_nv_1\}$, the resulting graph $\Gamma'$ will not be an odd cycle. Hence, we can assume that $\Gamma'$ is not an odd cycle. Now, by Theorem \ref{dichromatic number}, the vertex set of $\Gamma'$ can be partitioned into at most $\Delta-1$ sets inducing acyclic disubgraphs. Therefore, $M$ is a union of at most $\Delta-1$ positive matchings, from which the result follows.
\end{proof}
\begin{corollary}\label{pmd of union of two bipartite graphs}
Let the bipartite graph $\Gamma$ be a union of two edge-disjoint spanning subgraphs $\Gamma_1$ and $\Gamma_2$, where $\Gamma_1$ is $r$-regular and $\Gamma_2$ is non-empty with maximum valency $s$. Then
\[\pmd(\Gamma)\leq\pmd(\Gamma_2)+\binom{r}{2}+rs.\]
\end{corollary}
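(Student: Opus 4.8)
The plan is to realise $\Gamma_1$ as a disjoint union of perfect matchings and strip them off one at a time, invoking Theorem~\ref{pmd of bipartite graphs with perfect matching} at each stage. Since $\Gamma_1$ is an $r$-regular bipartite graph, the classical fact that every regular bipartite graph decomposes into perfect matchings (König's theorem) lets me write $E(\Gamma_1)=M_1\sqcup\cdots\sqcup M_r$, where each $M_i$ is a perfect matching of $\Gamma_1$ and hence of the spanning graph $\Gamma$. I then set $\Gamma^{(0)}:=\Gamma$ and $\Gamma^{(i)}:=\Gamma^{(i-1)}-M_i=\Gamma-(M_1\cup\cdots\cup M_i)$, so that $\Gamma^{(r)}=\Gamma-E(\Gamma_1)=\Gamma_2$.

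The decisive bookkeeping is on maximum valencies. Because $\Gamma_1$ is $r$-regular, every vertex meets exactly one edge of each $M_j$, so deleting $M_1,\ldots,M_{i-1}$ lowers every valency by exactly $i-1$; as $\Gamma_1$ contributes $r$ to every valency and $\Gamma_2$ contributes at most $s$ (with equality somewhere), one gets $\Delta(\Gamma^{(i-1)})=r+s-(i-1)$. At the $i$-th step I apply Theorem~\ref{pmd of bipartite graphs with perfect matching} to the bipartite graph $\Gamma^{(i-1)}$ with its perfect matching $M_i$, obtaining
\[\pmd(\Gamma^{(i-1)})\le\pmd(\Gamma^{(i)})+\Delta(\Gamma^{(i-1)})-1=\pmd(\Gamma^{(i)})+\big(r+s-i\big).\]
Telescoping these $r$ inequalities and using $\pmd(\Gamma^{(r)})=\pmd(\Gamma_2)$ yields
\[\pmd(\Gamma)\le\pmd(\Gamma_2)+\sum_{i=1}^{r}(r+s-i)=\pmd(\Gamma_2)+\binom{r}{2}+rs,\]
which is exactly the claimed bound.

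The main obstacle is the hypothesis $\Delta\ge3$ demanded by Theorem~\ref{pmd of bipartite graphs with perfect matching}: I must verify it at every one of the $r$ applications. The valencies $\Delta(\Gamma^{(i-1)})=r+s-(i-1)$ are smallest at the final peel $i=r$, where the value is $s+1$; thus the iteration is legitimate precisely while this running maximum valency stays at least $3$, and the whole argument goes through cleanly once $\Gamma_2$ is not too sparse (so that $s+1\ge 3$). The delicate point is therefore the tail of the process: if some $\Gamma^{(i-1)}$ has dropped to maximum valency $2$ it is a disjoint union of paths and even cycles, Theorem~\ref{pmd of bipartite graphs with perfect matching} no longer applies, and one must instead estimate the pmd of the remaining graph directly via Example~\ref{simple pmds} and fold it into the telescoping sum. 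Controlling this boundary contribution -- and checking that the density hypotheses on $\Gamma_2$ keep it within $\binom{r}{2}+rs$ -- is the crux of the argument; the rest is the routine arithmetic above.
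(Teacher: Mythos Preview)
Your strategy is exactly the paper's: invoke K\"onig's theorem to factor $\Gamma_1$ into perfect matchings $M_1,\ldots,M_r$, peel them off one by one via Theorem~\ref{pmd of bipartite graphs with perfect matching}, and telescope. The arithmetic and the identification $\Delta(\Gamma^{(i-1)})=r+s-(i-1)$ match the paper's proof line for line.

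Where you diverge is that you worry about the hypothesis $\Delta\ge 3$ in Theorem~\ref{pmd of bipartite graphs with perfect matching}; the paper's proof simply does not mention it. Your concern is legitimate, but the resolution is cleaner than your final paragraph suggests. When $s\ge 2$, every intermediate graph satisfies $\Delta(\Gamma^{(i-1)})=r+s-(i-1)\ge s+1\ge 3$, so all $r$ applications of the theorem are valid and the proof is already complete---there is no ``boundary contribution'' to control and nothing further to check. When $s=1$, on the other hand, the inequality in the corollary is actually \emph{false}, so no argument can fill the gap: take $\Gamma=K_{3,3}$ with $\Gamma_1$ a spanning $6$-cycle ($r=2$) and $\Gamma_2$ the remaining perfect matching ($s=1$); then $\pmd(K_{3,3})=5$ by Proposition~\ref{pmd of K_m,n}, while the claimed bound is $\pmd(\Gamma_2)+\binom{2}{2}+2\cdot1=1+1+2=4$. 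So the ``crux'' you flag is not a missing step but a genuine restriction on the statement; both your argument and the paper's are complete and correct precisely under the tacit assumption $s\ge 2$, which is all that is needed for the intended application in the next theorem.
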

\begin{proof}
K\"{o}nig's theorem \cite[Corollary 2.1.3]{rd} states that every regular bipartite graphs has a perfect matching. It follows that $\Gamma_1$ has a $1$-factorization, that is the edge set of $\Gamma_1$ is partitioned into $r$ perfect matchings, say $M_1,\ldots,M_r$. On the other hand, by Theorem \ref{pmd of bipartite graphs with perfect matching},
\[\pmd(\Gamma - M_1\cup\cdots\cup M_{i-1})\leq\pmd(\Gamma - M_1\cup\cdots\cup M_i)+\Delta(\Gamma)-i,\]
for all $i=1,\ldots,r$. Thus
\begin{align*}
\pmd(\Gamma)&\leq\pmd(\Gamma-M_1)+\Delta(\Gamma)-1\\
&\hspace{0.2cm}\vdots\\
&\leq\pmd(\Gamma - M_1\cup\cdots\cup M_r)+(\Delta(\Gamma)-1)+\cdots+(\Delta(\Gamma)-r)\\
&=\pmd(\Gamma_2)+\binom{r}{2}+rs,
\end{align*}
as required.
\end{proof}
\begin{theorem}
For every $r$-regular bipartite graph $\Gamma$, we have $\pmd(\Gamma)\leq\binom{r}{2}+2$.
\end{theorem}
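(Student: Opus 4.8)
The plan is to apply Corollary \ref{pmd of union of two bipartite graphs} after peeling off a carefully chosen $2$-regular piece from $\Gamma$. Since $\Gamma$ is $r$-regular bipartite, König's theorem guarantees a $1$-factorization $M_1,\ldots,M_r$ of $\Gamma$ into $r$ perfect matchings. I would set $\Gamma_2:=M_{r-1}\cup M_r$ and $\Gamma_1:=M_1\cup\cdots\cup M_{r-2}$, so that $\Gamma=\Gamma_1\cup\Gamma_2$ is an edge-disjoint union of spanning subgraphs with $\Gamma_1$ being $(r-2)$-regular and $\Gamma_2$ being $2$-regular.

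The key observation about $\Gamma_2$ is that the union of two distinct perfect matchings of a bipartite graph is a disjoint union of even cycles, each of length at least $4$. Hence $\Gamma_2$ has maximum valency $s=2$, and by Example \ref{simple pmds}(iii),(v) we get $\pmd(\Gamma_2)=3$. I would then invoke Corollary \ref{pmd of union of two bipartite graphs} with $\Gamma_1$ playing the role of the $(r-2)$-regular factor and $s=2$, obtaining
\[
\pmd(\Gamma)\leq\pmd(\Gamma_2)+\binom{r-2}{2}+2(r-2)=3+\left(\binom{r}{2}-1\right)=\binom{r}{2}+2,
\]
where the middle equality is the elementary identity $\binom{r-2}{2}+2(r-2)=\tfrac{(r-2)(r+1)}{2}=\binom{r}{2}-1$. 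This is precisely the desired bound.

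The main point to verify --- and the real reason the bound is $\binom{r}{2}+2$ rather than $\binom{r}{2}+1$ --- is that Corollary \ref{pmd of union of two bipartite graphs} (via Theorem \ref{pmd of bipartite graphs with perfect matching}) requires every intermediate graph arising in the peeling process to have maximum valency at least $3$. When we successively delete the perfect matchings $M_1,\ldots,M_{r-2}$ of $\Gamma_1$, the graph just before the final deletion has maximum valency exactly $s+1=3$, so the hypothesis $\Delta\geq3$ holds at every step. This is exactly why $\Gamma_2$ must be built from two matchings (giving $s=2$) rather than a single matching: taking $\Gamma_2=M_r$ would leave a $2$-valent intermediate graph at the last step, where Theorem \ref{pmd of bipartite graphs with perfect matching} no longer applies. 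I expect this verification of the valency condition to be the only genuine obstacle; everything else is bookkeeping.

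Finally, I would dispose of the degenerate small cases separately, since they fall outside the range where $\Gamma_1$ is a genuine regular graph with a $1$-factorization. For $r=1$ the graph is a single perfect matching with $\pmd=1\leq 2$, and for $r=2$ the graph is itself a disjoint union of even cycles with $\pmd=3=\binom{2}{2}+2$; both are consistent with the claimed inequality. For $r\geq 3$ the factor $\Gamma_1$ is $(r-2)$-regular with $r-2\geq 1$, so the argument above applies verbatim and completes the proof.
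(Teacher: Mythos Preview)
Your proof is correct and follows essentially the same approach as the paper: take $\Gamma_2$ to be a $2$-factor (the union of two perfect matchings from a $1$-factorization), note that $\pmd(\Gamma_2)=3$, and apply Corollary~\ref{pmd of union of two bipartite graphs} with $\Gamma_1$ being the remaining $(r-2)$-regular factor. Your write-up is in fact more careful than the paper's one-sentence proof, since you explicitly verify the $\Delta\geq 3$ hypothesis needed at each peeling step and dispose of the cases $r\leq 2$ separately.
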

\begin{proof}
The result follows from Corollary \ref{pmd of union of two bipartite graphs}, K\"{o}nig's theorem on the existence of perfect matchings in regular bipartite graphs, and assuming that $\Gamma_2$ is a $2$-factor (union of two disjoint perfect matchings) of $\Gamma$.
\end{proof}
\begin{remark}
Since a $3$-regular bipartite graph $\Gamma$ has no positive perfect matchings it follows in conjunction with the above theorem that $4\leq\pmd(\Gamma)\leq5$. We know that $\pmd(K_{3,3})=5$. As we show in the next section, $\pmd(CL_4)=5$ as well (Propositions \ref{pmd of CL_n}). Also, Propositions \ref{pmd of M_n} and \ref{pmd of generalized petersen graphs} yields an infinite family of $3$-regular bipartite graphs with pmd equal to $4$.
\end{remark}
\subsection{An algorithmic approach for approximating pmd of bipartite graphs}
In this part, we present a (possibly efficient) simple algorithm for computing a positive matching decomposition of bipartite graphs. Our algorithm is based on the notion of slopes of edges defined below and has complexity $O((m+n)^2mn)$ for a bipartite graph with part sizes $m$ and $n$. For the source code of the algorithm in GAP see \cite{mfdg}.
\begin{definition}
Assume that $\Gamma$ is a bipartite graph with bipartition $X=\{x_1, \ldots, x_m\}$ and $Y=\{y_1, \ldots, y_n\}$. The \textit{slope} of an edge $x_iy_j$ is defined as $s(x_iy_j)=j-i$.
\end{definition}
\begin{algorithm}[H]
\begin{algorithmic}[1]\baselineskip=10pt\relax

\REQUIRE A labeled bipartite graph $ \Gamma $ with bipartition $ X, Y $
\ENSURE A positive matching decomposition of $ \Gamma $
\STATE $ k\longleftarrow 0 $
\WHILE{$ E(\Gamma) \neq \varnothing $}
\STATE $ k \longleftarrow k + 1 $
\STATE  $ X' \longleftarrow X \setminus \{\text{isolated vertices of } X\}$
\STATE  $ Y' \longleftarrow \varnothing $
\STATE $M_k \longleftarrow \varnothing $
\WHILE{$ X' \neq \varnothing$}
\STATE $ s \longleftarrow \min(\text{Slopes}(\Gamma[X'\cup Y])) $
\STATE $ M \longleftarrow \{xy \in E(\Gamma[X'\cup Y]) \colon \quad \mathrm{slope}(xy)=s\} $
\STATE $ M_k \longleftarrow M_k \cup \{xy \in M \colon \quad y \notin Y'\} $
\STATE $ X' \longleftarrow X' \setminus \{x \colon \quad xy\in M\} $
\STATE $ Y' \longleftarrow Y' \cup \{y \colon \quad xy\in M\} $
\ENDWHILE
\STATE $E(\Gamma) \longleftarrow E(\Gamma) \setminus M_k $
\ENDWHILE
\RETURN $ M_1,\ldots,M_k$

\end{algorithmic}
\caption{\textsc{A positive matching decomposition of a labeled bipartite graph}}
\label{alg:A positive matching decomposition of a labeled bipartite graph}
\end{algorithm}
\begin{theorem}\label{pmd of bipartite graphs by slopes}
Let $\Gamma$ be a bipartite graph. Then the output of the Algorithm~\ref{alg:A positive matching decomposition of a labeled bipartite graph} is a positive matching decomposition of $\Gamma$.
\end{theorem}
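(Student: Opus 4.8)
The plan is to verify directly that the sequence $M_1,\ldots,M_k$ returned by the algorithm is a positive matching decomposition, that is, that each $M_k$ is a positive matching in the graph $\Gamma_k:=\Gamma-(M_1\cup\cdots\cup M_{k-1})$ remaining at the start of the $k$-th pass of the outer loop, and that the $M_k$ partition $E(\Gamma)$. Termination and the partition property come for free: on the first pass of the inner loop $Y'=\varnothing$, so all of the (at least one) minimum-slope edges are placed in $M_k$; hence $M_k\neq\varnothing$, the quantity $|E(\Gamma)|$ strictly decreases after the deletion $E(\Gamma)\leftarrow E(\Gamma)\setminus M_k$, and each edge is removed exactly once. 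That each $M_k$ is a matching is likewise a matter of bookkeeping: for a fixed slope $s$ the edges $x_iy_{i+s}$ already form a matching, deleting the chosen $X$-endpoints from $X'$ forbids reusing an $X$-vertex on a later inner pass, and the test $y\notin Y'$ forbids reusing a $Y$-vertex. The real content is therefore positivity.

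For positivity I would apply Theorem~\ref{positivity of matchings} together with the remark that, $\Gamma_k$ being bipartite, it suffices to exclude alternating cycles (rather than arbitrary alternating closed walks) in the subgraph of $\Gamma_k$ induced by $V(M_k)$. The key step, which I would isolate as a preliminary observation, is a \emph{slope-minimality property} of matched vertices: if $x$ is matched in $M_k$, say $xy\in M_k$, then $s(xy)=\min\{s(xy')\colon xy'\in E(\Gamma_k)\}$. Indeed, since $X'$ only shrinks while $Y$ is never diminished, the set of edges of $\Gamma_k$ incident to $x$ is unchanged until $x$ leaves $X'$; thus $x$ is deleted on exactly the inner pass whose global minimum slope equals the least slope of an edge at $x$, and the unique edge effecting this deletion is that minimum-slope edge, which enters $M_k$ precisely when its $Y$-endpoint is still free. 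For a matched $x$ this entering edge is its matching edge $xy$.

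Granting the observation, suppose for contradiction that $M_k$ admits an alternating cycle $x_{a_1},y_{b_1},x_{a_2},y_{b_2},\ldots,x_{a_q},y_{b_q},x_{a_1}$, with matching edges $x_{a_r}y_{b_r}$ and non-matching edges $y_{b_r}x_{a_{r+1}}$ (indices read modulo $q$). Fix $r$. The non-matching edge $y_{b_r}x_{a_{r+1}}$ is an edge of $\Gamma_k$ incident to the matched vertex $x_{a_{r+1}}$, so by slope-minimality its slope is at least that of the matching edge $x_{a_{r+1}}y_{b_{r+1}}$; writing slopes as index differences, $b_r-a_{r+1}\geq b_{r+1}-a_{r+1}$, that is $b_r\geq b_{r+1}$. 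Running this inequality around the cycle yields $b_1\geq b_2\geq\cdots\geq b_q\geq b_1$, forcing all the $b_r$ to coincide and contradicting the distinctness of the $Y$-vertices of a cycle. Hence no alternating cycle exists and $M_k$ is positive in $\Gamma_k$.

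The step I expect to be the main obstacle is establishing the slope-minimality observation with full rigour, since this is exactly where the subtle interaction of the three inner updates must be handled: an $X$-vertex can be deleted from $X'$ without contributing any edge to $M_k$ (namely when its minimum-slope neighbour already lies in $Y'$), and one must confirm both that this pathology does not occur for the matched vertices and that, when it does not, the matching edge is precisely the minimum-slope edge at that vertex. Once this lemma is in place, the cyclic telescoping argument is short, and the matching, termination, and partition claims are routine.
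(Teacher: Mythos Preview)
Your argument is correct and rests on the same underlying observation as the paper's proof, namely the slope-minimality property: for every $x$ matched in $M_k$, the matching edge at $x$ has the least slope among all edges of $\Gamma_k$ incident to $x$. You make this lemma explicit and then invoke criterion (ii) of Theorem~\ref{positivity of matchings} (no alternating cycle), deriving the cyclic chain $b_1\geq b_2\geq\cdots\geq b_q\geq b_1$. The paper instead invokes criterion (iii): given any nonempty $N\subseteq M_k$, it picks the edge $x_sy_t\in N$ with $t$ maximal and observes that $x_s$ is pendant in $\Gamma_k[N]$, since any other neighbour $y_{t'}$ of $x_s$ would satisfy $t'>t$ by slope-minimality while $y_{t'}\in V(N)$ forces $t'<t$. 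The two arguments are essentially dual unwindings of the same ordering principle; the paper's is terser (it absorbs your lemma into the phrase ``impossible by the algorithm''), while yours isolates the lemma cleanly and handles the subtlety you flag---that an $X$-vertex can leave $X'$ without being matched---more explicitly.
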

\begin{proof}
Let $X=\{x_1,\ldots,x_m\}$ and $Y=\{y_1,\ldots,y_n\}$ be a bipartition of $\Gamma$. It is enough to show that $M_1$ is a positive matching of $\Gamma$. Let $M$ be a non-empty subset of $M_1$ and let $x_sy_t\in M$ be such that $t := \max\{j \colon\ x_iy_j\in M\}$. Clearly, $x_s$ is a pendant vertex in $\Gamma[M]$, otherwise $x_s$ is adjacent to a vertex $y_{t'}$ with $t' < t$, which is impossible by Algorithm~\ref{alg:A positive matching decomposition of a labeled bipartite graph}. Therefore, by Theorem \ref{positivity of matchings}(iii), $M_1$ is a positive matching of $\Gamma$, as required.
\end{proof}
\begin{definition}
Let $\Gamma$ be a bipartite graph with bipartition $X$ and $Y$, and $\mathfrak{K}(\tau)$ be the number of matchings in the output of Algorithm~\ref{alg:A positive matching decomposition of a labeled bipartite graph} for every labeling $\tau$ of the vertex set of $\Gamma$. Let
\[ \mathfrak{K}(\Gamma) := \min \{ \mathfrak{K}(\tau) \colon \; \tau \text{ is a labelling} \} \]
be the minimum size of the output of Algorithm~\ref{alg:A positive matching decomposition of a labeled bipartite graph} in any labeling of $\Gamma$.	 Note that $|X| + |Y| - 1$, the maximum possible number of slopes, is a trivial upper bound for $\mathfrak{K}(\Gamma)$.
\end{definition}
\begin{example}\ 
\begin{itemize}
\item[(i)]Consider the cyclic graph $\Gamma$ in Fig. \ref{figure: bipartite graph} (left). With this labeling $\Gamma$ has four slopes $-2$, $-1$, $1$, and $2$ but $\mathfrak{K}(\tau)=3$ as Algorithm~\ref{alg:A positive matching decomposition of a labeled bipartite graph} yields the pmd $M_1=\{x_3y_1,x_1y_2\}$, $M_2=\{x_2y_1,x_3y_2,x_1y_3\}$, and $M_3=\{x_2y_3\}$. Hence $\mathfrak{K}(\tau)$ can be smaller than the number of slopes.
\item[(ii)]Let $\Gamma$ be the graph in Fig. \ref{figure: bipartite graph} (right). With this labeling we have $\mathfrak{K}(\tau)=4$ for Algorithm~\ref{alg:A positive matching decomposition of a labeled bipartite graph} yields the pmd $M_1=\{x_3y_1,x_2y_3\}$, $M_2=\{x_1y_1,x_3y_3\}$, $M_3=\{x_1y_2\}$, and $M_4=\{x_1y_3\}$. Note that $\pmd(\Gamma)=3$. Hence $\mathfrak{K}(\tau)$ need not be equal to $\pmd(\Gamma)$ in general.
\end{itemize}
\end{example}
\begin{center}
\begin{figure}[!htp]
\begin{tabular}{ccc}
\begin{tikzpicture}
\node [draw, fill=white, circle, inner sep=2pt, label=left:{$x_3$}] (x3) at (0,0) {};
\node [draw, fill=white, circle, inner sep=2pt, label=left:{$x_2$}] (x2) at (0,1) {};
\node [draw, fill=white, circle, inner sep=2pt, label=left:{$x_1$}] (x1) at (0,2) {};
\node [draw, fill=white, circle, inner sep=2pt, label=right:{$y_3$}] (y3) at (1,0) {};
\node [draw, fill=white, circle, inner sep=2pt, label=right:{$y_2$}] (y2) at (1,1) {};
\node [draw, fill=white, circle, inner sep=2pt, label=right:{$y_1$}] (y1) at (1,2) {};

\draw (x1)--(y2)--(x3)--(y1)--(x2)--(y3)--(x1);
\end{tikzpicture}
&\ \hspace{2cm}\ &
\begin{tikzpicture}
\node [draw, fill=white, circle, inner sep=2pt, label=left:{$x_3$}] (x3) at (0,0) {};
\node [draw, fill=white, circle, inner sep=2pt, label=left:{$x_2$}] (x2) at (0,1) {};
\node [draw, fill=white, circle, inner sep=2pt, label=left:{$x_1$}] (x1) at (0,2) {};
\node [draw, fill=white, circle, inner sep=2pt, label=right:{$y_3$}] (y3) at (1,0) {};
\node [draw, fill=white, circle, inner sep=2pt, label=right:{$y_2$}] (y2) at (1,1) {};
\node [draw, fill=white, circle, inner sep=2pt, label=right:{$y_1$}] (y1) at (1,2) {};

\draw (y2)--(x1)--(y1)--(x3)--(y3)--(x1) (x2)--(y3);
\end{tikzpicture}\\
\end{tabular}
\caption{}
\label{figure: bipartite graph}
\end{figure}
\end{center}

From Theorem \ref{pmd of bipartite graphs by slopes}, we obtain the following result immediately.
\begin{corollary}
Let $\Gamma$ be a bipartite graph. Then 
\[ \Delta (\Gamma) \leq \pmd(\Gamma) \leq \mathfrak{K}(\Gamma). \]
\end{corollary}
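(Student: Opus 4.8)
The plan is to establish the two inequalities separately, both following from results already available in the excerpt. The lower bound $\Delta(\Gamma)\leq\pmd(\Gamma)$ is immediate from Proposition~\ref{general bounds for pmds}(iii), which states this inequality for arbitrary graphs; in particular it holds for bipartite graphs with no extra work.

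For the upper bound $\pmd(\Gamma)\leq\mathfrak{K}(\Gamma)$, first I would recall how $\mathfrak{K}(\Gamma)$ is defined: it is the minimum, over all labelings $\tau$ of the vertices of $\Gamma$, of the number $\mathfrak{K}(\tau)$ of matchings that Algorithm~\ref{alg:A positive matching decomposition of a labeled bipartite graph} outputs. The key point is that for \emph{every} labeling $\tau$, Theorem~\ref{pmd of bipartite graphs by slopes} guarantees that the output $M_1,\ldots,M_k$ of the algorithm is a genuine positive matching decomposition of $\Gamma$, with exactly $\mathfrak{K}(\tau)=k$ parts. Since $\pmd(\Gamma)$ is by definition the minimum number of parts over all pmds of $\Gamma$, and each labeling furnishes a particular pmd with $\mathfrak{K}(\tau)$ parts, we obtain $\pmd(\Gamma)\leq\mathfrak{K}(\tau)$ for every labeling $\tau$.

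Taking the minimum over all labelings $\tau$ on the right-hand side then yields $\pmd(\Gamma)\leq\min_\tau\mathfrak{K}(\tau)=\mathfrak{K}(\Gamma)$, which is the desired upper bound. Combining the two inequalities gives $\Delta(\Gamma)\leq\pmd(\Gamma)\leq\mathfrak{K}(\Gamma)$, completing the proof.

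Since this corollary is essentially a direct repackaging of Theorem~\ref{pmd of bipartite graphs by slopes} together with the minimization in the definition of $\mathfrak{K}(\Gamma)$, I do not anticipate any genuine obstacle. The only subtlety worth being careful about is the logical direction of the minimization: the inequality $\pmd(\Gamma)\leq\mathfrak{K}(\tau)$ must be verified for an arbitrary fixed labeling before one is entitled to pass to the minimum, and it is precisely the fact that the algorithm produces a valid pmd for \emph{any} labeling (not merely an optimal one) that makes this step legitimate.
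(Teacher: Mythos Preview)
Your proof is correct and matches the paper's approach: the paper simply states that the corollary follows immediately from Theorem~\ref{pmd of bipartite graphs by slopes}, and you have spelled out exactly how, correctly also citing Proposition~\ref{general bounds for pmds}(iii) for the lower bound.
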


We conclude this subsection with two related questions.
\begin{question}\label{pmd(G)=K(G)?}
Is there any bipartite graph $\Gamma$ satisfying $\pmd(\Gamma)<\mathfrak{K}(\Gamma)$?
\end{question}
\begin{question}\label{K(G)=s?}
Is there any bipartite graph $\Gamma$ and a labeling $\tau$ for which $\mathfrak{K}(\tau)<s$ with $\tau$ having the minimum number of slopes $s$ among all possible labellings of $\Gamma$?
\end{question}
\section{Some families of graphs and their pmds}\label{section 4}
In this section, we shall compute the pmd of several classes of graphs. Indeed, we compute the pmds of generalized Petersen graphs and M\"{o}bius ladders while we only give an upper bound for the pmd of hypercubes.

First, we consider the circular ladders $CL_n:=GP(n,1)$ ($n\geq3$), which is the Cartesian product of an $n$-cycle and an edge.
\begin{proposition}\label{pmd of CL_n}
We have $\pmd(CL_n)=4$ for $n\neq4$, and $\pmd(CL_n)=5$ for $n=4$.
\end{proposition}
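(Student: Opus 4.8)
The plan is to prove the two claims separately, handling the exceptional case $n=4$ first and then the generic case $n\neq 4$. For both, I will use the characterization from Theorem~\ref{positivity of matchings} that a matching is positive precisely when its induced subgraph contains no alternating closed walk, together with the lower bound $\pmd(\Gamma)\geq\Delta(\Gamma)=3$ from Proposition~\ref{general bounds for pmds}(iii). Since $CL_n$ is a $3$-regular graph, the general strategy is to show $\pmd(CL_n)\geq 4$ always (no $3$-part pmd exists), and that $4$ parts suffice except when $n=4$.

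For the lower bound $\pmd(CL_n)\geq 4$, I would argue that no pmd with $3$ parts exists. The key observation is that a $3$-regular graph cannot have a positive \emph{perfect} matching: if $M$ is a perfect matching, then $\Gamma[M]=\Gamma$ has minimum valency $3$, so it has no pendant edge, contradicting Theorem~\ref{positivity of matchings}(iii). In a hypothetical $3$-part pmd $M_1,M_2,M_3$ of a $3$-regular graph, each vertex lies on exactly $3$ edges distributed among the three parts, which forces each $M_i$ to be a perfect matching; this is impossible, giving $\pmd(CL_n)\geq 4$. For $n=4$, I expect $CL_4\cong K_{4,4}$ minus a perfect matching, or more directly $CL_4=Q_3$ (the $3$-cube), and I would show $\pmd(CL_4)\geq 5$ by a finer counting argument: in any $4$-part pmd, the parts are matchings of sizes summing to $|E|=12$, and the rigid structure of $CL_4$ (each of its $4$-cycles being an obstruction via alternating closed walks) prevents packing the edges into $4$ positive matchings. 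The matching upper bound $\pmd(CL_4)\leq 5$ I would obtain either from $\pmd(K_{3,3})=5$ as a comparison, or by exhibiting an explicit $5$-part pmd.

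For the upper bound $\pmd(CL_n)\leq 4$ when $n\neq 4$, I would construct an explicit pmd with $4$ parts. Writing $CL_n$ with outer cycle $u_1,\ldots,u_n$, inner cycle $v_1,\ldots,v_n$, and rungs $u_iv_i$, I would partition the $3n$ edges into four matchings $M_1,M_2,M_3,M_4$ designed so that each $M_i$ (in the successively reduced graph) induces a forest with a pendant edge at every stage, verifiable via Theorem~\ref{positivity of matchings}(iii) or (iv). A natural attempt assigns the rungs to one or two colour classes and splits the two cycles by parity of index; the parity argument works cleanly when $n$ is such that the cycles admit a proper $2$-edge-colouring avoiding alternating closed walks, which is exactly where $n=4$ fails (its $4$-cycles are too short). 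I would verify directly that the induced subgraph of each $M_i$ in $CL_n-(M_1\cup\cdots\cup M_{i-1})$ is acyclic, hence has a pendant, confirming positivity.

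The main obstacle I anticipate is the explicit combinatorial design of the four matchings for all $n\neq 4$ simultaneously, and cleanly checking that none of them induces an alternating closed walk after the earlier parts are removed. Small cases ($n=3,5,6$) likely require separate or slightly ad hoc colourings before a uniform pattern emerges for larger $n$, and I would need to treat the parity of $n$ carefully since $CL_n$ is bipartite exactly when $n$ is even. Confirming the sharp jump to $5$ at $n=4$ is the other delicate point, since it is the sole exception and must be ruled out at level $4$ by a structural argument rather than a generic bound; I expect this exceptional analysis, rather than the generic construction, to be where the real care is needed.
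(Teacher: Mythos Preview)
Your lower bound argument is correct and in fact cleaner and more general than the paper's. You observe that in any $3$-part pmd of a $3$-regular graph each part is forced to be a perfect matching, and a perfect matching $M$ in a $3$-regular graph has $\Gamma[M]=\Gamma$ with no pendant, contradicting Theorem~\ref{positivity of matchings}(iii); this immediately gives $\pmd\geq 4$ for \emph{every} $3$-regular graph. The paper instead argues only for $CL_n$, by a case split on whether the first matching $M_1$ contains a spoke: if it does, a surviving $6$-cycle through the neighbouring spokes shows $\pmd(\Gamma-M_1)\geq 3$; if not, positivity forces $M_1$ to miss some $4$-cycle $u_i,u_{i+1},v_{i+1},v_i$, again giving $\pmd(\Gamma-M_1)\geq 3$. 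Your route is shorter and says more.

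For the upper bound the two approaches essentially agree. The paper makes your sketch concrete by taking $M_1,M_2$ to consist of the spokes split by parity of index, together with one or two cycle edges chosen so that $\Gamma-(M_1\cup M_2)$ is a disjoint union of paths; then $\pmd(\Gamma-(M_1\cup M_2))\leq 2$ finishes. This is exactly your ``rungs to two colour classes, cycles by parity'' idea, with the single subtlety that a couple of boundary cycle edges must be added to $M_1,M_2$ to break the remaining graph into paths; the parity of $n$ determines where those extra edges go. For $n=4$ both you and the paper defer to a direct check; note that your suggested comparison with $\pmd(K_{3,3})=5$ does not by itself yield $\pmd(CL_4)\leq 5$, so you do need the explicit $5$-part pmd (the paper points to the one in Proposition~\ref{pmd of Q_n}, since $CL_4\cong Q_3$).
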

\begin{proof}
Let $\Gamma:=CL_n$ with vertices $u_1,\ldots,u_n,v_1,\ldots,v_n$, where $u_i\sim u_{i+1}$, $v_i\sim v_{i+1}$, and $u_i\sim v_i$, for $i=1,\ldots,n$. Notice that all indices are taken modulo $n$. First we show that $\pmd(\Gamma)\geq4$. Let $M$ be the first positive matching in a minimum sized pmd of $\Gamma$, and $\Gamma'$ be the graph obtained from $\Gamma$ by removing edges in $M$. If $M$ contains an spoke $u_iv_i$, then none of the edges of the cycle $u_i,u_{i+1},v_{i+1},v_i,v_{i-1},u_{i-1}$ belongs to $M$ so that $\pmd(\Gamma)=\pmd(\Gamma')+1\geq4$. Hence, assume that $M$ does not have any spoke. As $M$ has less than $n/2$ edges from any of the cycles induced by $\{u_1,\ldots,u_n\}$ and $\{v_1,\ldots,v_n\}$ by Example \ref{simple pmds}(iii), there exists $1\leq i\leq n$ such that $u_iu_{i+1},v_iv_{i+1}\notin M$. Thus none of the edges of the cycle $u_i,u_{i+1},v_{i+1},v_i$ belongs to $M$ so that $\pmd(\Gamma)=\pmd(\Gamma')+1\geq4$ in this case too.

Now we show that $\Gamma$ has a pmd of size four for $n\neq4$. It is easy to see that
\begin{align*}
M_1&=\{u_1v_1,u_3v_3,\ldots,u_{n-2}v_{n-2}\}\cup\{v_{n-1}v_n\},\\
M_2&=\{u_2v_2,u_4v_4,\ldots,u_{n-1}v_{n-1}\}\cup\{u_nu_1\}
\end{align*}
if $n$ is odd, and 
\begin{align*}
M_1&=\{u_1v_1,u_3v_3,\ldots,u_{n-1}v_{n-1}\},\\
M_2&=\{u_2v_2,u_4v_4,\ldots,u_{n-2}v_{n-2}\}\cup\{v_{n-1}v_n,u_nu_1\}
\end{align*}
if $n$ is even are positive matchings of $\Gamma$, respectively. Since $\Gamma-M_1\cup M_2$ is a union of paths, it follows that $\pmd(\Gamma)=4$. For $n=4$, we may show by direct computation that $\pmd(CL_4)=5$ with a pmd of size $5$ as given in Proposition \ref{pmd of Q_n}.
\end{proof}

A similar argument, with a little modification, establishes the result for M\"{o}bius ladders.
\begin{proposition}\label{pmd of M_n}
We have $\pmd(M_n)=4$, for all $n\geq3$.
\end{proposition}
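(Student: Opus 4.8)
The plan is to mimic the structure of the proof of Proposition~\ref{pmd of CL_n} for the circular ladders, adapting it to the single topological twist that distinguishes the M\"{o}bius ladder $M_n$ from $CL_n$. Recall that $M_n$ is obtained from a $2n$-cycle $w_1,\ldots,w_{2n}$ by adding the $n$ ``rungs'' $w_iw_{i+n}$ (indices mod $2n$); equivalently one takes the ladder on vertices $u_1,\ldots,u_n,v_1,\ldots,v_n$ with the usual rungs $u_iv_i$ but joins the two ends with a \emph{crossed} pair of edges $u_nv_1$ and $v_nu_1$ instead of $u_nu_1,v_nv_1$. Since $M_n$ is $3$-regular, Proposition~\ref{general bounds for pmds}(iii) gives $\pmd(M_n)\geq\Delta(M_n)=3$, so the real content is to improve the lower bound to $4$ and to exhibit a matching pmd of size $4$.

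First I would establish $\pmd(M_n)\geq4$ by the same short-cycle argument used for $CL_n$. Let $M$ be the first part of a minimum pmd and set $\Gamma'=M_n-M$; it suffices to locate a $4$-cycle of $M_n$ none of whose edges lies in $M$, for then Theorem~\ref{positivity of matchings} applied to that cycle forces a fresh part and $\pmd(M_n)=\pmd(\Gamma')+1\geq 1+\pmd(C_k)\geq4$ via Example~\ref{simple pmds}(iii). If $M$ contains any rung $u_iv_i$, the $6$-cycle through its two neighboring rungs is free of $M$ and we are done exactly as before. Otherwise $M$ uses only cycle-edges, and since $M$ is a matching it contains fewer than half the edges of the $2n$-cycle, so two consecutive rungs together with the four spokes/cycle-edges between them yield an $M$-free $4$-cycle. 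The only place the M\"{o}bius twist intervenes is at the seam $\{u_n,v_n,u_1,v_1\}$, where the crossing edges replace the parallel ones; I would check that the seam $4$-cycle $u_1,u_2,\dots$ argument still produces an $M$-free square, and if the twist disrupts one candidate square it creates another, so the lower bound is robust. This is the step I expect to require the most care, precisely because the seam is where $M_n$ and $CL_n$ differ and the ``little modification'' promised in the text lives here.

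For the upper bound I would write down an explicit two-part positive matching $M_1,M_2$ covering all rungs together with a few cycle-edges, chosen so that $M_n-(M_1\cup M_2)$ is a disjoint union of paths, whence $\pmd(M_n-M_1\cup M_2)\leq2$ by Example~\ref{simple pmds}(ii) and $\pmd(M_n)\leq4$. By analogy with Proposition~\ref{pmd of CL_n} I would take $M_1$ to consist of the odd-indexed rungs $u_1v_1,u_3v_3,\ldots$ and $M_2$ the even-indexed rungs $u_2v_2,u_4v_4,\ldots$, each augmented with one seam edge from the crossed pair $\{u_nv_1,v_nu_1\}$ and possibly one further cycle-edge to absorb the leftover vertices, with the parity of $n$ dictating the precise bookkeeping at the twist. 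Positivity of each $M_i$ is immediate from Theorem~\ref{positivity of matchings}(iii): the induced subgraph on a matching of pairwise non-adjacent rungs plus an isolated seam edge has every vertex pendant. Finally I would confirm that after deleting $M_1\cup M_2$ no cycle survives — the crossed seam, unlike the parallel seam of $CL_4$, prevents the small-cycle obstruction that made $\pmd(CL_4)=5$, which is exactly why $M_n$ behaves uniformly and attains $\pmd(M_n)=4$ for every $n\geq3$.
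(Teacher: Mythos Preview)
Your lower-bound argument is correct and is exactly what the paper means by ``as in Proposition~\ref{pmd of CL_n}''. The gap is in the upper bound, where the ``precise bookkeeping at the twist'' that you defer is in fact the entire content of the construction.

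For odd $n$ your proposed first matching $M_1=\{u_1v_1,u_3v_3,\ldots,u_nv_n\}$ is \emph{not} positive in $M_n$: the seam produces the alternating $4$-cycle $u_1,v_1,u_n,v_n,u_1$ (rungs $u_1v_1,u_nv_n\in M_1$ and seam edges $v_1u_n,v_nu_1\notin M_1$), contradicting Theorem~\ref{positivity of matchings}. You also cannot ``augment $M_1$ with one seam edge'' as you suggest, since both $u_nv_1$ and $v_nu_1$ meet the rungs $u_1v_1,u_nv_n$ already in $M_1$; so your claim that the induced subgraph ``has every vertex pendant'' fails precisely at the twist. For even $n$ the odd/even rung split \emph{is} positive, but it saturates every vertex, so no cycle or seam edge can be added to either part; removing only the rungs leaves the full $2n$-cycle, and you obtain merely $\pmd(M_n)\le 2+3=5$.

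The paper's fix is as follows. For odd $n$, take the even-indexed rungs together with one seam edge \emph{first}, namely $N_1=\{u_2v_2,\ldots,u_{n-1}v_{n-1}\}\cup\{u_1v_n\}$; this is positive, and once one seam edge is gone the set $N_2=\{u_1v_1,u_3v_3,\ldots,u_nv_n\}$ becomes positive in $M_n-N_1$. For even $n$, drop the two seam-adjacent rungs $u_1v_1$ and $u_nv_n$ from the parity classes to make room for the seam edges and two cycle edges: $N_1=\{u_2v_2,\ldots,u_{n-2}v_{n-2}\}\cup\{u_1v_n,u_{n-1}u_n\}$ and $N_2=\{u_3v_3,\ldots,u_{n-1}v_{n-1}\}\cup\{u_nv_1,u_1u_2\}$. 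In both cases $M_n-(N_1\cup N_2)$ is a union of paths, giving $\pmd(M_n)\le4$.
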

\begin{proof}
Let $\Gamma:=M_n$ with vertices $u_1,\ldots,u_n,v_1,\ldots,v_n$, where $u_i\sim u_{i+1}$, $v_i\sim v_{i+1}$, for $i=1,\ldots,n-1$, $u_n\sim v_1$, $v_n\sim u_1$, and $u_i\sim v_i$, for $i=1,\ldots,n$. As in Proposition \ref{pmd of CL_n}, we have $\pmd(\Gamma)\geq4$. Let
\begin{align*}
N_1&=\{u_2v_2,u_4v_4,\ldots,u_{n-1}v_{n-1}\}\cup\{u_1v_n\},\\
N_2&=\{u_1v_1,u_3v_3,\ldots,u_uv_n\}
\end{align*}
if $n$ is odd, and
\begin{align*}
N_1&=\{u_2v_2,u_4v_4,\ldots,u_{n-2}v_{n-2}\}\cup\{u_1v_n,u_{n-1}u_n\},\\
N_2&=\{u_3v_3,u_5v_5,\ldots,u_{n-1}v_{n-1}\}\cup\{u_nv_1,u_1u_2\}
\end{align*}
if $n$ is even. Then $N_1$ is a positive matching in $\Gamma$, $N_2$ is a positive matching in $\Gamma-N_1$, and $\Gamma-N_1\cup N_2$ is a union of paths, which imply that $\pmd(\Gamma)=4$.
\end{proof}
\begin{proposition}\label{pmd of generalized petersen graphs}
Let $\Gamma$ be the generalized Petersen graph $GP(n,k)$ ($1\leq k\leq n/2$). Then 
\[\pmd(\Gamma)=\begin{cases}
3,&n=2k\ \emph{and}\ k\neq3,\\
5,&n=8\ \emph{and}\ k=1,\\
4,&\emph{otherwise}.\end{cases}\]
\end{proposition}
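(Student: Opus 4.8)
The plan is to mimic the structure of the proofs for circular and Möbius ladders (Propositions \ref{pmd of CL_n} and \ref{pmd of M_n}), treating the generalized Petersen graph $GP(n,k)$ case by case according to the value in the statement. Recall that $GP(n,k)$ has outer vertices $u_1,\ldots,u_n$ forming an $n$-cycle ($u_i\sim u_{i+1}$), inner vertices $v_1,\ldots,v_n$ with $v_i\sim v_{i+k}$, and spokes $u_iv_i$, all indices modulo $n$. Since $GP(n,k)$ is $3$-regular, Proposition \ref{general bounds for pmds}(iii) gives the lower bound $\pmd(\Gamma)\geq\Delta(\Gamma)=3$, and so in most cases the real work is (a) proving that $4$ is a genuine lower bound, and (b) exhibiting an explicit pmd of the asserted size as an upper bound.

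For the lower bounds I would argue, as in Proposition \ref{pmd of CL_n}, by analyzing the first matching $M$ of a minimum pmd. The key point is that any single positive matching $M$ must avoid at least one full edge of some short cycle, forcing $\pmd(\Gamma)=\pmd(\Gamma-M)+1$ to exceed the naive count. By Example \ref{simple pmds}(iii) we have $\pmd(C_n)=3$, so whenever the first matching fails to cover every local $4$-cycle or triangle one gets $\pmd(\Gamma)\geq4$. The genuinely exceptional small cases, namely $n=2k$ with $k\neq3$ (yielding $3$) and $n=8,k=1$ (the cube $CL_4$, yielding $5$, already handled by Proposition \ref{pmd of CL_n} and Proposition \ref{pmd of Q_n}), must be isolated first; the former includes graphs that decompose into disjoint short cycles or prisms with enough structure to admit a $3$-part pmd, and the latter is the sporadic cube computed by computer.

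For the upper bounds in the generic case, I would explicitly construct two positive matchings $M_1,M_2$ whose union leaves a forest (a disjoint union of paths), so that by Theorem \ref{positivity of matchings}(iii) each $M_i$ is positive and Corollary \ref{pmd of trees} applied to the remaining forest gives at most two further parts, hence $\pmd(\Gamma)\leq4$. The natural candidates alternate spokes $u_iv_i$ with occasional outer or inner edges to break every cycle, exactly paralleling the formulas in Propositions \ref{pmd of CL_n} and \ref{pmd of M_n}; verifying that the chosen $M_1,M_2$ contain no alternating closed walk is immediate from Theorem \ref{positivity of matchings}(ii) since each induced subgraph $\Gamma[M_i]$ is a disjoint union of edges. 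The case $n=2k$, $k\neq3$ needs a tailored decomposition into three parts, reflecting that here the inner ``cycle'' $v_i\sim v_{i+k}$ degenerates into a union of short cycles of length $n/\gcd(n,k)=2$, and one must check separately that $k=3$ (the Petersen-like exception) and $n=8,k=1$ do not fall into this pattern.

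The main obstacle I anticipate is the bookkeeping around the inner cycle structure: the inner edges $v_i\sim v_{i+k}$ form $\gcd(n,k)$ disjoint cycles of length $n/\gcd(n,k)$, and the parity and lengths of these cycles interact with $n\bmod 2$ and with $k$ in ways that force several sub-cases when writing down $M_1$ and $M_2$ explicitly. In particular one must ensure the chosen edges simultaneously break every outer $4$-cycle (or the short cycles created by the jumps of size $k$) without leaving any vertex doubly covered, which is why the clean closed forms only emerge after separating $n$ even from $n$ odd and excising the degenerate $n=2k$ and exceptional $n=8,k=1$ values.
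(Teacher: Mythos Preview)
Your overall plan matches the paper's: reduce $k=1$ to Proposition~\ref{pmd of CL_n}, treat $n=2k$ by an ad hoc three-part decomposition, and in the generic range $2\le k<n/2$ prove $\pmd=4$ by producing two positive matchings $M_1,M_2$ whose removal leaves a union of paths, splitting the construction according to whether $\gcd(n,k)=1$ or $\gcd(n,k)>1$. Two technical points, however, do not go through as you have written them.

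For the lower bound you propose to find an uncovered ``local $4$-cycle or triangle'' after removing the first matching, imitating the $CL_n$ argument. But for $k>1$ the graph $GP(n,k)$ generically has girth larger than $4$, so there are no such short cycles to point to. The paper's argument is simpler and uniform: when $n\neq 2k$ the graph is $3$-regular, so removing any matching leaves $\delta(\Gamma-M_1)\ge 2$; thus $\Gamma-M_1$ contains a cycle and $\pmd(\Gamma-M_1)\ge 3$ by Example~\ref{simple pmds}(ii), forcing $\pmd(\Gamma)\ge 4$. For the upper bound, your assertion that ``each induced subgraph $\Gamma[M_i]$ is a disjoint union of edges'' is false for the natural candidates: a set of alternating spokes $\{u_{2i}v_{2i}\}$ picks up inner edges $v_{2i}v_{2i+k}$ whenever $k$ is even, and the paper's actual $M_1$ deliberately contains $v_1v_{k+1}$ together with spokes adjacent to it. Positivity therefore cannot be read off from Theorem~\ref{positivity of matchings}(ii); the paper checks it via the pendant criterion Theorem~\ref{positivity of matchings}(iii), and you should plan on that verification rather than on $\Gamma[M_i]$ being edgeless.

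A small side remark: $GP(8,1)=CL_8$, not $CL_4$. The graph $CL_4=GP(4,1)$ is the $3$-cube on eight vertices, so your identification of the exceptional case with ``the cube $CL_4$'' is the right graph but the wrong $(n,k)$.
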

\begin{proof}
Assume the vertices of $\Gamma$ are labeled with $u_i,v_i$, for $i\in\mathbb{Z}_n$ where $u_i\sim u_{i+1}$, $v_i\sim v_{i+k}$, and $u_i\sim v_i$, for all $i\in\mathbb{Z}_n$. First observe that $\Gamma$ is the circular ladder when $k=1$ and its pmd is computed in Proposition \ref{pmd of CL_n}. Assume $k>1$ and $n=2k$ is even. A simple computation yields $\pmd(\Gamma)=4$ when $k=3$. If $k=2k'$ is even, then 
\[\{u_{2i-1}u_{2i}\colon\quad 1\leq i\leq k'\}\cup\{u_{k+1}v_{k+1},\ldots,u_nv_n\}\]
is a positive matching in $\Gamma$ whose removal result in a union of paths. Thus $\pmd(\Gamma)=3$ when $k$ is even. Now, assume that $k>3$ is odd. Then 
\[\{u_1v_1,u_2u_3,u_4v_4\}\cup\{u_{2i-1}u_{2i}\colon\quad 3\leq i\leq k\}\]
is a positive matching whose removal is a Hamilton path. Thus $\pmd(\Gamma)=3$ in this case too.

In what follows, we assume that $2\leq k<n/2$. Let $p:=\pmd(\Gamma)$ and $M_1,\ldots,M_p$ be a pmd of $\Gamma$. Then $\pmd(\Gamma)=\pmd(\Gamma-M_1)+1\geq4$ for $\delta(\Gamma-M_1)\geq2$ so that $\Gamma-M_1$ has a cycle. Indeed, we show that $\pmd(\Gamma)=4$.

First assume that $\gcd(n,k)=1$, that is there is only one inner cycle. Let 
\[M_1:=\{v_1v_{k+1}\}\cup\{u_{2i}v_{2i}\colon\quad 1\leq i\leq k/2\}\cup\{u_{k+2i}v_{k+2i}\colon\quad 1\leq i\leq (n-k)/2\}\]
and
\[M_2:=\{u_1u_2\}\cup\{u_{2i+1}v_{2i+1}\colon\quad 1\leq i<k/2\}\cup\{u_{k+2i-1}v_{k+2i-1}\colon\quad 1\leq i\leq(n-k+1)/2\}.\]
Utilizing Theorem \ref{positivity of matchings}(iii), one observe simply that $M_1$ is a positive matching in $\Gamma$. Applying Theorem \ref{positivity of matchings}(iii) once more, one can also show the positivity of $M_2$ in $\Gamma-M_1$. Since $k>1$, we observe that $\Gamma-M_1\cup M_2$ is a Hamilton path. Thus $\pmd(\Gamma)\leq4$ so that $\pmd(\Gamma)=4$.

Now, assume that $d:=\gcd(n,k)>1$. Then we have $d$ inner cycles including vertices $v_1,\ldots,v_d$, respectively. Let 
\begin{align*}
M_1:=\{v_iv_{i+k}\colon\quad 1\leq i\leq d\}&\cup\{u_{d+2i-1}v_{d+2i-1}\colon\quad 1\leq i\leq (k-d+1)/2\}\\
&\cup\{u_{d+k+2i-1}v_{d+k+2i-1}\colon\quad 1\leq i\leq (n-k-d+1)/2\}
\end{align*}
and
\begin{align*}
M_2:=\{u_iv_i\colon\quad 1\leq i\leq d\}&\cup\{u_{d+2i}v_{d+2i}\colon\quad 1\leq i\leq (k-d)/2\}\\
&\cup\{u_{k+i}v_{k+i}\colon 1\leq i\leq d-2\}\cup\{u_{d+k-1}u_{d+k}\}\\
&\cup\{u_{d+k+2i}v_{d+k+2i}\colon\quad 1\leq i\leq (n-k-d)/2\}.
\end{align*}
As in the case where $n$ and $k$ are coprime we can see that $M_1$ is a positive matching in $\Gamma$ and $M_2$ is a positive matching in $\Gamma-M_1$. Also, $\Gamma-M_1\cup M_2$ is a union of $d-1$ paths. Thus $\pmd(\Gamma)\leq4$, which implies that $\pmd(\Gamma)=4$. The proof is complete.
\end{proof}

Finally, we consider the hypercubes. Recall that for a group $G$ and an inversed-closed subset $C$ of $G\setminus\{1\}$, the \textit{Cayley graph} $\Cay(G,C)$ is a graph with vertex set $G$ such that two vertices $x,y\in G$ are adjacent if $yx^{-1}\in C$.	
\begin{proposition}\label{pmd of Q_n}
We have $\pmd(Q_n)\leq 2n-1$, for all $n\geq1$.
\end{proposition}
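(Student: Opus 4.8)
The plan is to prove $\pmd(Q_n)\leq 2n-1$ by induction on $n$, exploiting the recursive structure $Q_n=Q_{n-1}\times K_2$, i.e. $Q_n$ consists of two disjoint copies of $Q_{n-1}$ joined by a perfect matching $M$ of ``dimension-$n$'' edges. Since $Q_n$ is an $n$-regular bipartite graph, the matching $M$ connecting the two copies is positive (indeed it is a union of pendant edges in $Q_n[M]$, so Theorem~\ref{positivity of matchings}(iii) applies trivially). Removing $M$ leaves $\Gamma-M$, which is the disjoint union of two copies of $Q_{n-1}$; by Example~\ref{simple pmds}(v) and the induction hypothesis this has a pmd with at most $2(n-1)-1=2n-3$ parts. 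Prepending $M$ would then give a pmd with at most $2n-2$ parts, which is even stronger than the claimed bound, so a naive decomposition along a single matching seems to overshoot. The resolution is that the recursion must be set up so that each inductive step contributes exactly $2$ to the part count, matching the base case $\pmd(Q_1)=\pmd(K_2)=1$ and yielding $2n-1$.

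Concretely, I would peel off \emph{two} positive matchings per dimension. First I would take the perfect matching $M$ of dimension-$n$ edges as the first part. After removing $M$, I would fuse the two copies of $Q_{n-1}$ back into a single $Q_{n-1}$ structure by carrying the labelings across, and use the inductive pmd of $Q_{n-1}$ on one of the copies while simultaneously handling the second copy. The cleanest approach is to give an explicit vertex-labeling scheme: identify $V(Q_n)$ with $\{0,1\}^n$ and define weights $w$ on vertices so that each of the $2n-1$ matchings $M_k$ is exactly the set of edges whose endpoint-weight-sum lies in a prescribed sign pattern. For instance, one can order the matchings by the dimension of the flipped coordinate together with a parity of the lower coordinates, showing directly via Theorem~\ref{positivity of matchings}(iv) that each $M_k$ is a pendant-orderable matching in the graph remaining after earlier parts are removed.

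The key steps in order are: (1) establish the base case $\pmd(Q_1)=1$; (2) describe the splitting $Q_n = Q_{n-1}^{(0)} \sqcup Q_{n-1}^{(1)} \cup M$, verify $M$ is a positive matching of $Q_n$ using Theorem~\ref{positivity of matchings}; (3) after removing $M$, produce a pmd of $Q_{n-1}^{(0)}\sqcup Q_{n-1}^{(1)}$ with $2n-3$ parts where \emph{the same} indexed matching is used on both copies simultaneously (so the two copies do not inflate the count), invoking the induction hypothesis and Example~\ref{simple pmds}(v); (4) conclude $\pmd(Q_n)\leq 1 + (2n-3) = 2n-2$, and then argue that the single extra part needed to reconcile this with the stated bound is harmless — or, if the sharper bound $2n-2$ is in fact what the inductive structure yields, re-examine whether one of the intermediate steps genuinely forces a second new matching at each level.

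The main obstacle I anticipate is exactly this bookkeeping of the part count: ensuring that the two copies of $Q_{n-1}$ are decomposed \emph{in parallel} rather than sequentially, so that $k$ dimensions cost $2k-1$ parts rather than $2k$ or $k$. This requires a coherent global labeling of $V(Q_n)$ compatible with the labelings of both sub-cubes, rather than treating each copy as an independent graph. I expect the honest route to be the explicit weight construction: define $w(x)=\sum_{i=1}^n c_i x_i$ for a suitable rapidly-increasing sequence $c_i$ (e.g. $c_i$ growing fast enough that the dimension-$i$ edges dominate), so that the induced ordering of edges by sign-patterns realizes a pmd of size $2n-1$ whose parts can be read off directly; verifying positivity of each part then reduces to the acyclicity criterion of Theorem~\ref{positivity of matchings}, applied dimension by dimension.
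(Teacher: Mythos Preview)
Your argument contains a genuine error in step~(2): the perfect matching $M$ of dimension-$n$ edges is \emph{not} a positive matching of $Q_n$. You write that $M$ ``is a union of pendant edges in $Q_n[M]$'', but recall from the paper that $\Gamma[M]$ denotes the subgraph of $\Gamma$ induced by the \emph{vertices} of $M$, not the graph whose edge set is $M$. Since $M$ is a perfect matching, $V(M)=V(Q_n)$ and hence $Q_n[M]=Q_n$. Now take any $4$-cycle in $Q_n$ that uses two dimension-$n$ edges; for instance in $Q_2$ the cycle $00,01,11,10,00$ alternates between edges of $M$ and edges of $M^c$. By Theorem~\ref{positivity of matchings}(ii) this alternating closed walk witnesses that $M$ is not positive. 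More generally, no perfect matching of $Q_n$ (for $n\geq 2$) can be positive, since every edge of $Q_n$ lies in a $4$-cycle and any perfect matching meets each such cycle in either $0$ or $2$ edges.

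This is exactly why your count came out as $2n-2$ rather than $2n-1$: the bound you derived is too strong to be true. Indeed $\pmd(Q_2)=\pmd(C_4)=3$ and $\pmd(Q_3)=\pmd(CL_4)=5$ (Proposition~\ref{pmd of CL_n}), so $2n-2$ is already false for $n=2,3$. Your later sketch of an ``explicit weight construction'' with $w(x)=\sum c_i x_i$ does not rescue the argument either: such a linear weight makes each $M_k$ a perfect matching along one coordinate direction, and the same alternating-$4$-cycle obstruction applies. The paper's proof avoids this by using cosets $e+G_e$ of index-$4$ subgroups of $\mathbb{Z}_2^n$: each such coset is a matching of size $2^{n-2}$ (not $2^{n-1}$), inducing a graph isomorphic to $2^{n-2}K_2$ with no extra edges, hence genuinely positive. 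Organizing these cosets gives $2n-2$ parts covering the edges inside the two copies of $Q_{n-1}$, and the cross-matching supplies the $(2n-1)$st part --- which \emph{is} positive at that final stage because all other edges have already been removed.
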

\begin{proof}
First observe that $|V(Q_n)|=2^n$ and $|E(Q_n)|=n2^{n-1}$, for all $n\geq1$. Also, $Q_n=\Cay(\mathbb{Z}_2^n,\{\mathbf{e}_1,\ldots,\mathbf{e}_n\})$, where $\mathbf{e}_i$ is the element with $1$ in the $i$th entry and $0$ elsewhere. We set $\varepsilon(\mathbf{e}_i):=i$, for all $i=1,\ldots,n$. For every edge $e=uv\in E(Q_n)$, we define the subgroup $G_e$ of $\mathbb{Z}_2^n$ as the group of all elements of $\mathbb{Z}_2^n$ with zero coordinate sum and zero $\varepsilon(u-v)$-entry. A simple verification shows that $|G_e|=2^{n-2}$ and $Q_n[e+G_e]\cong 2^{n-2}K_2$ is a matching, so that $e+G_e$ is a positive matching in $Q_n$ by Theorem \ref{positivity of matchings}. Let $H$ be the maximal subgroup of $\mathbb{Z}_2^n$ consisting of elements with zero $n$th entry. It is not difficult to see that the sets $e+G_e$ partition $E(Q_n)$. Also
\[e+G_e\subseteq E(Q_n[H])\cup E(Q_n[H+\mathbf{e}_n]),\]
for all $e\in E(Q_n[H])$. Hence, there exist edges $e_1,\ldots,e_{2n-2}\in Q_n[H]$ for which $e_1+G_{e_1},\ldots,e_{2n-2}+G_{e_{2n-2}}$ partition $E(Q_n[H])\cup E(Q_n[H+\mathbf{e}_n])$. Then 
\[e_1+G_{e_1},\ldots,e_{2n-2}+G_{e_{2n-2}},\{\{h,h+\mathbf{e}_n\}\colon\ h\in H\}\]
determine a pmd of $Q_n$, from which the result follows.
\end{proof}
\begin{conjecture}
We have $\pmd(Q_n)=2n-1$, for all $n\geq1$.
\end{conjecture}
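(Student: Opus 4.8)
The upper bound $\pmd(Q_n)\le 2n-1$ is already established in Proposition~\ref{pmd of Q_n}, so the entire content is the matching lower bound $\pmd(Q_n)\ge 2n-1$. The plan is to prove it by induction on $n$ along the product decomposition $Q_n=Q_{n-1}\,\square\,K_2$. Write $Q_n=Q^0\cup Q^1\cup P$, where $Q^0$ and $Q^1$ are the two layers (each isomorphic to $Q_{n-1}$) and $P$ is the perfect matching of \emph{cross edges} joining them. The base cases are immediate: $Q_1=K_2$, $Q_2=C_4$ and $Q_3\cong CL_4$ have pmd $1$, $3$ and $5$ by Example~\ref{simple pmds} and Proposition~\ref{pmd of CL_n}, matching $2n-1$ for $n=1,2,3$.

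The first ingredient is a restriction principle: if $M_1,\dots,M_p$ is a pmd of $Q_n$, then the nonempty members of $\{M_i\cap E(Q^0)\}_i$ form a pmd of $Q^0\cong Q_{n-1}$. Indeed, any weight function witnessing positivity of $M_i$ restricts to the vertices of $Q^0$ and witnesses positivity of $M_i\cap E(Q^0)$ in the corresponding residual subgraph of $Q^0$; since the $M_i\cap E(Q^0)$ partition $E(Q^0)$, the claim follows (this is Theorem~\ref{positivity of matchings}(iii) applied layerwise). By induction at least $2n-3$ of the parts meet $Q^0$, and likewise at least $2n-3$ meet $Q^1$. The remaining task is to manufacture two \emph{further} parts out of the cross matching $P$: I would fix the layer, say $Q^1$, whose decomposition finishes last, locate a vertex $v\in Q^1$ whose cross edge in $P$ is placed only after $Q^0$ is essentially exhausted, and use the $n$ edges at $v$ (which lie in $n$ distinct parts, since they pairwise share $v$) together with the positivity of the parts covering $v$ and its cross-neighbour to force two indices beyond the $2n-3$ already accounted for.

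The hard part is precisely this $+2$. The restriction principle by itself yields only $\pmd(Q_n)\ge 2n-3$, and this is genuinely the best it can give: the jump from $\pmd(C_4)=3$ to $\pmd(Q_3)=5$ shows that the two extra parts really do arise from the interaction of the two layers with $P$, not from either layer alone. In the dense cases $K_n$, $K_{m,n}$ and complete multipartite graphs (Propositions~\ref{pmd of K_n}, \ref{pmd of K_m,n} and Theorem~\ref{pmd of complete multipartite graphs}) the analogous factor of two is produced by an \emph{alternating-square} argument: two same-side vertices newly covered by a single part always lie on a common $4$-cycle, which caps how fast vertices can be covered. In $Q_n$ this fails—two vertices of the same layer share a $4$-cycle only when they differ in exactly two coordinates—so a single positive matching can newly cover many same-layer vertices and the covering-rate bound collapses. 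Making the cross edges $P$ force the two additional parts therefore requires a genuinely global argument that tracks how the forbidden parallel edges of the cube's squares (which, by the Remark following Theorem~\ref{positivity of matchings}, are the only alternating $4$-cycles available) interleave across the two layers. Pinning this down uniformly in $n$ is the crux, and is why the equality is stated here only as a conjecture.
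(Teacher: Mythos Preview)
The statement is a \emph{conjecture} in the paper: no proof is given there, and none is claimed. So there is no paper's own argument to compare against.

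Your proposal is likewise not a proof, and you say as much. The restriction principle you describe is correct and does yield, by induction, $\pmd(Q_n)\ge \pmd(Q_{n-1})=2n-3$: if $M_1,\dots,M_p$ is a pmd of $Q_n$, then restricting the witnessing weight functions to $V(Q^0)$ shows that the nonempty sets $M_i\cap E(Q^0)$ form a pmd of $Q^0\cong Q_{n-1}$, so at least $2n-3$ indices are used. But, as you explicitly acknowledge in your third paragraph, this is two short of the target, and you do not supply the missing $+2$; the sketch in your second paragraph (``locate a vertex $v\in Q^1$ whose cross edge \ldots'') is a hope rather than an argument, and your own final sentence concedes that ``the equality is stated here only as a conjecture.''

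In short: the paper offers no proof, and neither do you. Your write-up is an honest discussion of why the obvious inductive attack stalls at $2n-3$ and why the alternating-square counting used for $K_n$ and $K_{m,n}$ does not transfer to hypercubes. That is useful commentary, but it is not a proof proposal in any meaningful sense, and there is a genuine gap exactly where you say there is.
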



\end{document}